\newtheorem{theorem}{Theorem}[section]
\newtheorem{proposition}[theorem]{Proposition}
\newtheorem{lem}[theorem]{Lemma}
\newtheorem{conjecture}[theorem]{Conjecture}
\newtheorem*{rem*}{Remark}
\theoremstyle{definition}
\newtheorem{defn}[theorem]{Definition}
\newtheorem{remark}{Remark}
\DeclareMathOperator{\Tr}{Tr}
\title[Counting geodesic arcs]{A relative trace formula and counting geodesic arcs in the hyperbolic plane}
\date{today}
\author[D. Lekkas]{Dimitrios Lekkas}
\address{Faculty of Computing, Mathematics, Engineering \& Natural Sciences, Northeastern University London, Devon House, 58 St Katharine's Way, London E1W 1LP, United Kingdom}
\email{dimitrios.lekkas@nulondon.ac.uk}
\author[Y. Petridis]{Yiannis N. Petridis}
\address{Department of Mathematics, University College London, Gower Street, London WC1E 6BT, United Kingdom}
\email{i.petridis@ucl.ac.uk}
\thanks{}
\keywords{}
\subjclass[2020]{Primary 11J71, 
11N45 
; Secondary 11N36
 }
\date{\today}
\definecolor{plum}{rgb}{0.36078, 0.20784, 0.4}
\definecolor{chameleon}{rgb}{0.30588, 0.60392, 0.023529}
\definecolor{cornflower}{rgb}{0.12549, 0.29020, 0.52941}
\definecolor{scarlet}{rgb}{0.8, 0, 0}
\definecolor{brick}{rgb}{0.64314, 0, 0}
\definecolor{sunrise}{rgb}{0.80784, 0.36078, 0}
\definecolor{lightblue}{rgb}{0.15,0.35,0.75}
\pgfplotsset{compat=1.17}
\begin{document}

\begin{abstract}
We study a modification of the hyperbolic circle problem: instead of all elements of a Fuchsian group $\Gamma$,  we consider the double cosets by two hyperbolic subgroups. This has a geometric interpretation in terms of the number of common perpendiculars between two closed geodesics for $\Gamma \backslash\mathbb{H}$. We prove an explicit relative trace formula, which is flexible for the counting problem. Using a large sieve inequality developed by the first author and Voskou, we prove a new bound in mean square for the error term of order $O(X^{1/2}\log X)$. We conjecture that this is the correct order of growth. Along the way we provide a new proof of the pointwise error bound $O(X^{2/3})$, originally proved by Good.
\end{abstract}

\maketitle
\section{Counting cosets on Fuchsian groups}\label{Counting cosets on Fuchsian groups}

For the hyperbolic plane $ \mathbb H$ and two points $z$ and $w$ in it, we denote  $\rho (z, w)$
the hyperbolic distance between them.  Let $
\Gamma$ be a cocompact 
subgroup of $\hbox{PSL}(2, \mathbb R)$. The classical hyperbolic lattice point problem asks to estimate the number
of points in the orbit $\Gamma z$ within a disk of radius $\cosh^{-1}(X/2)$  centered  at $w$, i.e. to give
an asymptotic formula for $$N(X, z, w )=\#\{\gamma \in \Gamma : 2\cosh \rho (\gamma z, w) \le X\}.$$
Huber \cite{Hu59},  Selberg \cite{Selberg}, Good \cite{Good} proved that
$$N(X, z, w)=M (X, z, w)+O(X^{2/3}),$$ where
$$M(X, z, w)\sim \frac{2\pi}{\hbox{vol}(\Gamma\backslash \mathbb H)} X, \quad X\to \infty.$$
The main term contains contributions from the eigenfunctions of the Laplace operator with small eigenvalues. 

Huber \cite{HuHel}, and, later, Good \cite{Good} considered other orbit-counting problems in hyperbolic space, stemming from the classification of 
 elements $\gamma \in \hbox{PSL}(2,\mathbb{R})$
into elliptic, parabolic and hyperbolic elements. Such a $\gamma$ fixes a point, cusp or geodesic respectively.  For $i=1,2$, let $H_i$ be a subgroup of $\Gamma$, which is the stabilizer of a point, cusp or geodesic. Considering the double cosets in $H_1 \backslash \Gamma /H_2$, instead of the full group $\Gamma$, leads to nine different counting problems. Good \cite{Good} studied all these  problems in a unified way. In an appropriate parameter $X$, the error term in the counting is $O\big(X^{2/3}\big)$. The error term has not been improved for any of the nine problems. Unfortunately, Good's book is hard to understand because of complicated notation. 
 Huber studied  the case $H_1$ hyperbolic and $H_2$ elliptic  in \cite{Hu98} as well.
 
 Our goal here is to use a different, more flexible technique for $H_1$ and $H_2$ hyperbolic.  We modify Huber's method in \cite{Hu98} to prove a relative trace formula that allows fairly general test functions. Using the extra flexibility we study the error in mean square.
 
 More specifically, this problem concerns the estimation of
$$\widetilde{N}(X,l_1,l_2)= \# \Big\{\gamma \in  H_1\backslash \Gamma /H_2 | \inf_{\substack{z\in l_1 \\ w\in l_2}}\cosh({\rho} (\gamma z,w))\le X\Big\} \: ,$$
where $H_1\backslash\Gamma/H_2$ is the double coset decomposition of $\Gamma$ by hyperbolic subgroups $H_1 , H_2$, corresponding to closed geodesics $l_1$ and $l_2$. By conjugating the group $\Gamma$, we can assume that $l_1$ lies on the imaginary axis $I$. Moreover, we take $H_1=H_2$. The distance between $l_1$ and $\gamma l_1$ is the length of the geodesic segment that is orthogonal to both geodesics, see Fig. \ref{fig:my_label}.
\begin{figure}
    \centering

    \begin{tikzpicture}[scale=1.25,domain=0:13]
	\tikzstyle{axisarrow} = [-{Latex[inset=0pt,length=7pt]}]

	\clip(-2,-1.25) rectangle (6,4);

	\draw [cornflower!30,step=0.2,thin] (-2,0) grid (6,4);
	\draw [cornflower!60,step=1.0,thin] (-2,0) grid (6,4);

	\draw[thick,axisarrow] (-1,-1) -- (-1,4);
	\node[inner sep=0pt] at (-1.3,3.7) {$I$};
	\node[inner sep=0pt] at (5.75,+0.33) {$\Re z$};

\draw[teal] (-1,2.15) -- (-1,2.35) -- (-0.8,2.35) -- (-0.8,2.15) -- (-1,2.15);
\draw[blue] (-1,2.77) -- (-1,2.97) -- (-0.8,2.97) -- (-0.8,2.77) -- (-1,2.77);

\draw[teal] (0.19,1.18) -- (0.36,1.22) -- (0.385,1);
\draw[blue] (2.07,1.25) -- (2.21,1.415) -- (2.35,1.262);
	\node at (-1.2,2.45) [] {$l$};
	\node at (1,1.2) [] {$\gamma_1 I$};
	\node at (5,2.1) [] {$\gamma_2 I$};
	\draw[brick,ultra thick] (-1,1) -- (-1,3);	
	\begin{scope}[shift={(4,0)}]
	\def\Radius{2.1}
  \path
    (-\Radius, 0) coordinate (A)
    -- coordinate (M)
    (\Radius, 0) coordinate (B)
    (M) +(60:\Radius) coordinate (C)
    +(120:\Radius) coordinate (D)
  ;
  \draw[blue]
    (B) arc(0:180:\Radius) -- cycle
; \end{scope}
\begin{scope}[shift={(0.5,0)}]
	\def\Radius{1}
  \path
    (-\Radius, 0) coordinate (A)
    -- coordinate (M)
    (\Radius, 0) coordinate (B)
    (M) +(60:\Radius) coordinate (C)
    +(120:\Radius) coordinate (D)
  ;
  \draw[teal]
    (B) arc(0:180:\Radius) -- cycle
;    
\end{scope}

\draw[teal] (0.2,0.95) arc (0:90:1.2);
\draw[blue] (2.2,1.1) arc (35:90:3.9);
	\draw[thick,axisarrow] (-2,0) -- (6,0);
\end{tikzpicture}	
\caption{The geometric interpretation for the counting problem of the double cosets.}
\label{fig:my_label}
\end{figure}
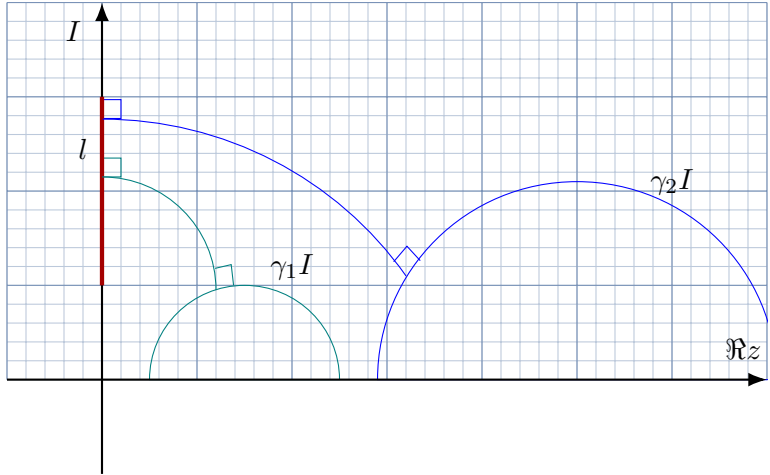
Martin, McKee and Wambach in \cite[Lemma 1]{MMW} relate this distance to $\delta(\gamma):=2|ad+bc|$, for a given $\gamma = \begin{bmatrix}
a & b \\
c & d
\end{bmatrix}  
 \in H_1\backslash\Gamma/H_1$:
\begin{lem}[\cite{MMW}]\label{udist}
For $\gamma \in \rm{PSL}(2,\mathbb{R})$ such that $abcd\neq 0$ and $u(z,w)$ the point-pair invariant defined in \eqref{uzw} we have  
\[ \inf(u(\gamma \cdot ix,iy) \; | x,y \in \mathbb{R}^{+})= 
\begin{cases}
\delta(\gamma)-2, & abcd>0 \; , \\
0, & abcd<0 \; .
\end{cases}
\]
Thus
$$ \max(\delta(\gamma),2)= 2\cosh({\rm dist}(\gamma I,I))\: ,$$
where ${\rm dist}(\gamma I,I)=\inf_{z,w\in I}\rho(\gamma z,w)$.
\end{lem}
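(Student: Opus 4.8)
The plan is to decouple the two-variable infimum as $\inf_{x,y}=\inf_{x}\inf_{y}$, perform the inner minimization in closed form as a point-to-geodesic distance, and then reduce everything to minimizing a single elementary function of $x$. First I would fix a representative $\gamma=\begin{bmatrix}a&b\\c&d\end{bmatrix}$ with $ad-bc=1$ and compute the image of the axis $I$ explicitly: for $x>0$,
\[ \gamma\cdot ix=\frac{acx^{2}+bd}{c^{2}x^{2}+d^{2}}+i\,\frac{x}{c^{2}x^{2}+d^{2}}=:X+iY, \]
where $ad-bc=1$ is used in the imaginary part. Inserting $w=iy$ into \eqref{uzw}, the $y$-dependence has the shape $A/y+By+C$ with $A,B>0$, so the infimum over $y\in\mathbb{R}^{+}$ is attained at $y=\sqrt{X^{2}+Y^{2}}$ by AM--GM; this is nothing but the classical distance from the single point $\gamma\cdot ix$ to the geodesic $I$, with closed form $\cosh\rho(z,I)=\lvert z\rvert/\operatorname{Im}(z)$. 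After this step the invariant becomes a fixed affine function of
\[ \frac{\lvert\gamma\cdot ix\rvert}{\operatorname{Im}(\gamma\cdot ix)}=\frac{\sqrt{X^{2}+Y^{2}}}{Y}=\sqrt{\Big(acx+\frac{bd}{x}\Big)^{2}+1}. \]

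The second step is to minimize $h(x)^{2}:=\big(acx+bd/x\big)^{2}$ over $x>0$. Because $abcd\neq0$ all entries are nonzero and the product $(acx)(bd/x)=abcd$ is a nonzero constant, so AM--GM applies: when $abcd>0$ the two summands keep a common sign and $h(x)^{2}$ attains its minimum $4abcd$ at $x=\sqrt{\lvert bd/ac\rvert}$, whereas when $abcd<0$ they have opposite signs, $h$ changes sign on $\mathbb{R}^{+}$ and hence vanishes at an interior point, giving infimum $0$. The determinant relation gives the identity $(ad+bc)^{2}=(ad-bc)^{2}+4abcd=1+4abcd$, so $\inf_{x}\big(h(x)^{2}+1\big)$ equals $(ad+bc)^{2}$ when $abcd>0$ and $1$ when $abcd<0$. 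Feeding this back through the $y$-minimization and the normalization of \eqref{uzw} yields the two claimed values $\delta(\gamma)-2=2\lvert ad+bc\rvert-2$ and $0$.

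Finally, the ``Thus'' clause follows by combining the first assertion with the relation between the point-pair invariant and distance, which converts $\inf u$ into $2\cosh(\operatorname{dist}(\gamma I,I))-2$, together with the dichotomy recorded by the same identity: since $\delta(\gamma)^{2}=4(ad+bc)^{2}=4+16abcd$, one has $abcd>0\iff\delta(\gamma)\geq2$ and $abcd<0\iff\delta(\gamma)<2$, so the two branches glue into $\max(\delta(\gamma),2)$. I expect the only genuine obstacle to be bookkeeping rather than ideas: tracking the exact normalizing constant of \eqref{uzw} consistently through both minimizations, and confirming that the extremizing $y$ (and, in the crossing case $abcd<0$, the extremizing $x$) lies at an interior point of $\mathbb{R}^{+}$ rather than escaping to $0$ or $\infty$, which is exactly where the hypothesis $abcd\neq0$ enters.
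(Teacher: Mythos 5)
Your proof is correct, but it takes a genuinely different route from the paper. The paper works with $h(x,y)=4u(\gamma\cdot ix,iy)=\frac{a^2x}{y}+\frac{b^2}{xy}+c^2xy+\frac{d^2y}{x}-2$ as a function of two variables, solves $\nabla h=0$ to locate the critical point $x_{\min}^2=|bd/(ac)|$, $y_{\min}^2=|ab/(cd)|$, evaluates $h(x_{\min},y_{\min})=2|ad|+2|bc|-2$, and then must separately verify that this critical point is a global minimum by showing $h\to\infty$ near the boundary of $(0,\infty)^2$. You instead decouple the infimum: the inner minimization over $y$ is the point-to-geodesic distance $\cosh\rho(z,I)=|z|/\Im z$, which reduces everything to the one-variable problem $\inf_{x>0}(acx+bd/x)^2$, handled by AM--GM (for $abcd>0$) or by an interior sign change (for $abcd<0$). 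This buys you two things: each one-dimensional minimization comes with its global minimality for free, so the paper's boundary-behavior argument is unnecessary; and the intermediate quantity $|\gamma\cdot ix|/\Im(\gamma\cdot ix)=\cosh\rho(\gamma\cdot ix,I)$ makes the geometric content (distance from a point of $\gamma I$ to $I$) transparent. Both arguments ultimately rest on the same identity $(ad+bc)^2=(ad-bc)^2+4abcd=1+4abcd$ and the same case analysis on $\operatorname{sgn}(abcd)$, using $ad-bc=1$ to exclude $ad<0,\ bc>0$.

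One bookkeeping point you should make explicit rather than wave at: with the normalization \eqref{uzw}, your computation gives $\inf_{x,y}u=\tfrac12\big(\sqrt{1+4abcd}-1\big)=\tfrac14\big(\delta(\gamma)-2\big)$ when $abcd>0$, not $\delta(\gamma)-2$ as you assert (and as the displayed formula in the lemma states). The value $\delta(\gamma)-2$ is the infimum of $4u$ --- this is exactly the quantity $h$ the paper's proof computes, and the lemma's display inherits a normalization slip from the cited source. Your value $\inf u=(\delta-2)/4$ is the one that correctly feeds the relation $\cosh\rho=1+2u$ to produce the ``Thus'' clause $2\cosh(\operatorname{dist}(\gamma I,I))=\max(\delta(\gamma),2)$, so your final conclusion is right; just do not claim the intermediate computation ``yields'' $\delta(\gamma)-2$ for $u$ itself.
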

Similarly to \cite[p.11]{MMW} and \cite[Lem. 8]{Tsuz} we may assume that $\gamma \in \Gamma - H_1$ satisfies the condition $abcd\neq 0$, which implies $\delta(\gamma)\neq 2$. Such elements are called regular.
 There are finitely many double coset representatives $\gamma$ in $H_1\backslash \Gamma /H_1$ for which $\delta(\gamma)<2$ or equivalently $abcd<0$, see \cite[p. 11--12]{MMW}. Those elements are called exceptional, see Fig. \ref{figexc}.
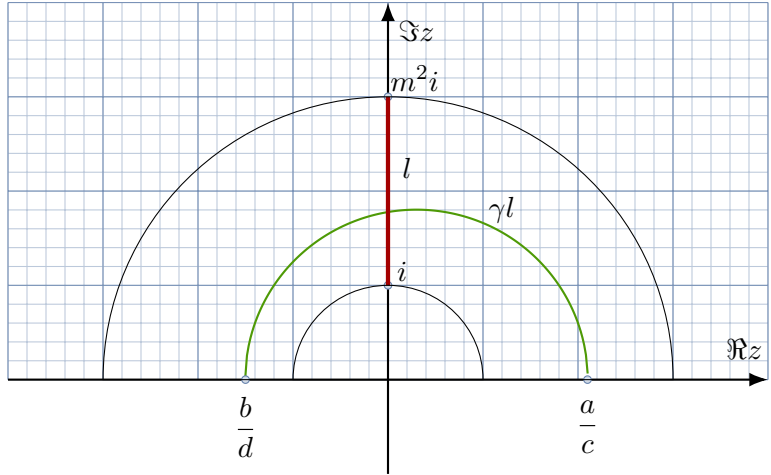
\begin{figure}
\centering
\begin{tikzpicture}[scale=1.25,domain=0:13]
	\tikzstyle{axisarrow} = [-{Latex[inset=0pt,length=7pt]}]

	\clip(-2,-1.25) rectangle (6,4);

	\draw [cornflower!30,step=0.2,thin] (-2,0) grid (6,4);
	\draw [cornflower!60,step=1.0,thin] (-2,0) grid (6,4);

	\draw[thick,axisarrow] (2,-1) -- (2,4);
	\node[inner sep=0pt] at (2.3,3.7) {$\Im z$};
	\draw[thick,axisarrow] (-2,0) -- (6,0);
	\node[inner sep=0pt] at (5.75,+0.33) {$\Re z$};

	\draw [chameleon,thick,domain=1.6973:5.303,samples=200] plot ({\x-1.2}, {sqrt(-(\x-3.5)^2 + 1 + (2-3.5)^2 )});
	
	\node at (0.5,0) [circle,draw=cornflower!70,fill=cornflower!20,inner sep=1pt] {};
	\node at (4.1,0) [circle,draw=cornflower!70,fill=cornflower!20,inner sep=1pt] {};
	\node at (2,1) [circle,draw=cornflower!70,fill=cornflower!20,inner sep=1pt] {};
	\node at (2,3) [circle,draw=cornflower!70,fill=cornflower!20,inner sep=1pt] {};

	\node at (2.2,2.25) [] {$l$};
	\node at (3.2,1.8) [] {$\gamma l$};
	\draw[brick,ultra thick] (2,1) -- (2,3);

	\node at (0.5,-0.5) [] {$\displaystyle \frac{b}{d}$};
	\node at (4.1,-0.5) [] {$\displaystyle \frac{a}{c}$};
	\node at (2.15,1.15) [] {$\displaystyle i$};
	\node at (2.28,3.2) [] {$\displaystyle m^2i$};
	\begin{scope}[shift={(2,0)}]

	\def\Radius{3}
  \path
    (-\Radius, 0) coordinate (A)
    -- coordinate (M)
    (\Radius, 0) coordinate (B)
    (M) +(60:\Radius) coordinate (C)
    +(120:\Radius) coordinate (D)
  ;
  \draw
    (B) arc(0:180:\Radius) -- cycle
;    
	\def\Radius{1}
  \path
    (-\Radius, 0) coordinate (A)
    -- coordinate (M)
    (\Radius, 0) coordinate (B)
    (M) +(60:\Radius) coordinate (C)
    +(120:\Radius) coordinate (D)
  ;
  \draw
    (B) arc(0:180:\Radius) -- cycle
;    
\end{scope}

\end{tikzpicture}	
\caption{A closed geodesic $l$ with norm $m^2$ and its image by an exceptional element $\gamma$.}
    \label{figexc}

\end{figure}

The condition $\inf(\cosh({ \rho} (\gamma z,w)| {{z, w\in l_1}}))\le X$ is equivalent to $$B(\gamma):= |ad+bc| =\delta(\gamma)/2<X, \quad \text{for} \quad \delta(\gamma)>2.$$ 

Set
$$N(X,l_1)= \# \Big\{\gamma \in  H_1\backslash \Gamma /H_1 | B(\gamma)<X \Big\}.$$
Since the exceptional double cosets are finite, we have that $$N(X,l_1)=\widetilde{N}(X,l_1,l_1)+O(1).$$

Therefore, our problem is reduced to the estimation of $N(X,l_1)$.\\
\begin{remark} For the case of two different hyperbolic subgroups $H_1$ and $H_2$, 
we have $$N(X,l_1,l_2)= \# \Big\{\gamma \in  H_1\backslash \Gamma /H_2 | B(\tau^{-1} \gamma)<X \Big\},$$
where $\tau \in \hbox{PSL}_2(\mathbb{R})$ is such that $\tau^{-1} \cdot l_2$ lies on the imaginary axis. See 
\cite[Ch. 6]{MMW} and Remark \ref{Rem}.
    \end{remark}

\subsection{Statements of our results}
Suppose that $\Gamma$ is cocompact and torsion-free. Let $\{u_j\}_{j=0}^{\infty}$ be a complete orthonormal system of real-valued eigenfunctions for the discrete spectrum of the hyperbolic Laplacian, with corresponding eigenvalues $\lambda_j=s_j(1-s_j)$. We also set $s_j=1/2+it_j$. We call the eigenfunctions $u_j$ Maass forms. The eigenvalues $\lambda_j$ with $\lambda_j<1/4$, which is equivalent to $ 1/2<s_j\le 1$, are called small eigenvalues.  

We provide a new proof of Good's theorem \cite[Th. 4]{Good} with explicit main term for $H_1 \backslash \Gamma / H_1$ with $H_1$ a hyperbolic subgroup corresponding to $l:=l_1$.
\begin{theorem}\label{mainthm}
We have
$$N(X,l)= \sum_{1/2<s_j\le1}\frac{2}{\pi}\gamma_1(s_j)|\hat{u}_j|^2X^{s_j}+O\big(X^{2/3}\big),$$
where $$\gamma_1(s)=\frac{\pi}{2}\frac{\Gamma(s+1/2)\Gamma((s+1)/2)}{\Gamma(s/2)^3}\frac{4}{s(2s-1)} \; ,$$
and $$\hat{u}_j=\int_lu_jds.$$ The $O$-estimate depends on the geodesic $l$.
\end{theorem}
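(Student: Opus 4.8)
The plan is to deduce Theorem~\ref{mainthm} from the relative trace formula by applying it to a family of test functions that approximate the sharp cutoff $\mathbf{1}[B(\gamma) < X]$, extracting the contribution of the small eigenvalues as the main term while absorbing everything else into the error. First I would set up the automorphic kernel $K(z,w) = \sum_{\gamma \in \Gamma} k(u(z, \gamma w))$ for a radial test function $k$ and integrate it against the geodesic $l$ in both variables. Parametrizing $l$ as $H_1\backslash I$, unfolding the $\gamma$-sum along the two copies of the stabilizer $H_1$ collapses the double integral $\int_l \int_l K(z,w)\, ds(z)\, ds(w)$ into a sum over double cosets $\gamma \in H_1\backslash\Gamma/H_1$ of a geometric transform $G$ of $k$ (an Abel-type transform of $k$ along the orthogonal geodesic, whose argument is exactly the invariant $B(\gamma)$ supplied by Lemma~\ref{udist}). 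Since $\Gamma$ is cocompact and torsion-free the spectrum is purely discrete, and the spectral expansion $K(z,w) = \sum_j h(t_j) u_j(z) u_j(w)$ produces $\sum_j h(t_j)|\hat u_j|^2$ on the other side. This is the relative trace formula I would rely on:
$$\sum_{\gamma \in H_1\backslash\Gamma/H_1} G(B(\gamma)) = \sum_j h(t_j)|\hat u_j|^2 + (\text{contribution of the identity and exceptional cosets}).$$

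Next I would choose $G = G_X^{\pm}$ to be smooth majorants and minorants of $\mathbf{1}[\,\cdot\, < X]$, with a transition region whose width is a parameter to be optimized at the end, so that $N(X,l)$ is squeezed between the two geometric sides. The heart of the main-term computation is then the determination of the spectral transform $h_X^{\pm}(t)$ attached to $G_X^{\pm}$. Working on the Mellin side, the transform of the indicator evaluated at a spectral parameter $s$ is governed by the integral of the relevant Legendre/conical function against $B^{s}$, and this is precisely where the explicit Gamma-factor combination $\Gamma(s+1/2)\Gamma((s+1)/2)/\Gamma(s/2)^3$ and the rational factor $4/(s(2s-1))$ making up $\gamma_1(s)$ arise. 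Shifting the Mellin contour (equivalently, reading off the leading asymptotics of $h_X(t)$ at the small eigenvalues) produces, for each $s_j \in (1/2,1]$, a term $\tfrac{2}{\pi}\gamma_1(s_j)|\hat u_j|^2 X^{s_j}$; the $s_0=1$ term, with $|\hat u_0|^2 = \ell^2/\mathrm{vol}(\GmodH)$ for $\ell$ the length of $l$ and $\gamma_1(1)=1$, furnishes the linear-in-$X$ leading behaviour.

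For the error term I would bound the contribution of the remaining large eigenvalues together with the cost of smoothing. The two competing quantities are the decay of $h_X(t)$ for large $t$ — effective only up to a spectral cutoff $T$ fixed by the width of the smoothing — and the accumulated period mass $\sum_{t_j \le T} |\hat u_j|^2 \ll T$ coming from the local Weyl law for the geodesic periods. Balancing the resulting spectral error against the measure of the transition region (itself estimated by the main term on a short interval) and optimizing the smoothing width yields the exponent $2/3$, exactly as in Selberg's treatment of the classical hyperbolic circle problem. The main obstacle is obtaining a sufficiently uniform estimate for $h_X(t)$ across the whole range of $t$: one needs both its precise leading behaviour near the small eigenvalues (for the correct constants $\gamma_1(s_j)$) and a clean power-saving decay for large $t$, and it is the quality of this large-$t$ decay, set against the linear growth of the period sums, that pins the error at $O(X^{2/3})$ rather than anything smaller.
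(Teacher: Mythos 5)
Your outline is correct in its overall architecture and would lead to the stated theorem, but it derives the relative trace formula by a genuinely different route from the paper. You automorphize a two-point kernel $K(z,w)=\sum_{\gamma\in\Gamma}k(u(z,\gamma w))$ and unfold the double integral $\int_l\int_l K\,ds\,ds$ along two copies of $H_1$; this is essentially the setup of Tsuzuki and of Martin--McKee--Wambach, and the geometric transform $G(B(\gamma))$ is then an iterated Abel-type integral of $k$ over both geodesics, which you would have to invert twice to design the majorants and minorants of the sharp cutoff. The paper instead follows Huber's one-variable framework: it automorphizes $k(v(\gamma z))=f(1/\cos^2v(\gamma z))$ over single cosets $H_1\backslash\Gamma$ (a function of the distance from $\gamma z$ to the axis $I$, not a point-pair invariant), integrates once over $l$, and unfolds once; the spectral side then comes directly from Huber's expansion $A(f)(z)=\sum_j 2\hat u_j d_{t_j}(f)u_j(z)$, so one period is already built into the coefficients and the second appears upon integrating over $l$. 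The payoff is that the geometric integral $J_\gamma^I(f)$ reduces by explicit changes of variables to a single Weyl/Abel integral in $B(\gamma)$, so $g^{\pm}$ can be prescribed directly and $f^{\pm}$ recovered by one Abel inversion, and the spectral transform is the Huber transform $d_t(f)$ rather than the Selberg--Harish-Chandra transform. The quantitative core is the same in both treatments and you have identified it correctly: the leading asymptotics of the transform at the small eigenvalues producing $\gamma_1(s_j)$, a uniform large-$t$ bound of the shape $X^{1/2}\min\{|t|^{-1/2},|t|^{-3/2}X/Y\}$, the local Weyl law $\sum_{|t_j|\le T}|\hat u_j|^2\ll T$, and the balance $XY^{-1/2}\asymp Y$ giving $Y=X^{2/3}$. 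Be aware, though, that the step you describe in one sentence --- a ``sufficiently uniform estimate for $h_X(t)$ across the whole range of $t$'' --- is where almost all of the paper's labour lies (Lemmata \ref{mainlem} and \ref{lem19} and Proposition \ref{mainprop}, via hypergeometric transformation formulas, with the point $t=0$ needing a separate Legendre-function argument), and that the finitely many exceptional double cosets with $B(\gamma)<1$, where the Abel integral degenerates, must be split off and bounded separately as in Lemma \ref{Exc}.
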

\begin{remark}
    Tsuzuki \cite{Tsuz} studied the problem when $l_1$ and $l_2$ are the same, with an error term $O\big(X^{5/6}\big)$, which is worse than the general result of Good: $O(X^{2/3})$, \cite[Th. 4]{Good}.
 The gamma factors $\gamma_1(s)$ match with \cite[Th. 25]{Tsuz}. The asymptotics in Theorem \ref{mainthm} agree with \cite[Th. 25]{Tsuz} up to a  a factor of $2$, see also discussion 2.1 in \cite{Tsuz}. 
\end{remark}
\begin{remark}
    Good \cite{Good} studied four different subcases depending on the sign of $ab$ and $ac$. In this work we only study the aggregate counting for this problem. For further work on the refined counting problem and its geometric significance we refer to \cite{Voskou}.
\end{remark}

We call $\hat{u}_j$ the period of the real-valued Maass form $u_j$ along the geodesic $l$. 
We define the main term of our hyperbolic lattice counting problem as
$$M(X,l):= \sum_{1/2< s_j \le 1} \frac{2}{\pi}\gamma_1(s_j)|\hat{u}_j|^2X^{s_j}, $$ and the error term as 
$$ E(X,l)=N(X,l)-M(X,l).$$
Similarly to \cite[Th. 1.2]{CP}, we want to show that, on average, $E(X,l)$ is $O\big(X^{1/2}\log X\big)$. For this purpose, the first author and Voskou proved the following large sieve inequality for the periods $\hat{u}_j$:
\begin{theorem}[\cite{Lsieve}]\label{Sieve}
Let $T,X>1$ and $x_1, \dots , x_R \in [X,2X]$. Assume that $|x_\nu-x_\mu| > \delta >0$ for $\nu \neq \mu$. Then
\begin{align}
    \begin{split}
        \sum_{\nu=1}^R \Big| \sum_{|t_j|\le T}a_jx_\nu^{it_j}\hat{u}_j\Big|^2 \ll \big(T+X\delta^{-1}\big)||\mathbf{a}||_*^2,
    \end{split}
\end{align}
where $$||\mathbf{a}||_*= \Big(\sum_{|t_j| \le T}|a_j|^2\Big)^{1/2}.$$
\end{theorem}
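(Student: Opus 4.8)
The plan is to treat this as a large sieve inequality and proceed by duality. Writing $y_\nu=\log x_\nu$, so that $x_\nu^{it_j}=e^{it_j y_\nu}$, the asserted bound for the operator with matrix $\big(x_\nu^{it_j}\hat u_j\big)_{\nu,j}$ is equivalent, since the $u_j$ are real-valued and hence the $\hat u_j$ are real, to the dual inequality
\[
\sum_{|t_j|\le T}\hat u_j^2\,\Big|\sum_{\nu=1}^R b_\nu e^{it_j y_\nu}\Big|^2\ll\big(T+X\delta^{-1}\big)\sum_{\nu=1}^R|b_\nu|^2.
\]
The key geometric fact about the new frequencies is that they are well separated: since $t\mapsto\log t$ is $X^{-1}$-Lipschitz on $[X,2X]$ and $|x_\nu-x_\mu|>\delta$, we have $|y_\nu-y_\mu|>\delta/(2X)$ for $\nu\ne\mu$.

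Next I would insert a positive spectral majorant to remove the sharp cutoff $|t_j|\le T$ and make a trace formula applicable. Choose an even, nonnegative $h$ with $h\ge\mathbf 1_{[-T,T]}$, essentially supported in $[-T-1,T+1]$ and bounded by $O(1)$, that is the Selberg transform of a point-pair invariant. Bounding $\mathbf 1_{|t_j|\le T}\le h(t_j)$ and expanding the square gives
\[
\sum_{|t_j|\le T}\hat u_j^2\,\Big|\sum_\nu b_\nu e^{it_j y_\nu}\Big|^2\le\sum_{\nu,\mu}b_\nu\overline{b_\mu}\,K(y_\nu-y_\mu),\qquad K(r):=\sum_j h(t_j)\,\hat u_j^2\cos(r t_j),
\]
where the cosine results from symmetrising $t_j\mapsto-t_j$, which is harmless because $\lambda_j$ depends only on $t_j^2$. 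Everything is now reduced to understanding the single spectral sum $K(r)$ as a function of the gap $r$.

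The main step is to evaluate $K(r)$ through a trace formula. As $h(t)\cos(rt)$ is even it is the transform of a radial point-pair invariant $k_r$, and integrating the pre-trace formula over $l\times l$ — exactly the geometric construction behind our relative trace formula and Lemma \ref{udist} — yields $K(r)=\sum_{\gamma\in\Gamma}\int_l\int_l k_r(\gamma z,w)\,ds\,ds$, a sum over double cosets weighted according to $\mathrm{dist}(\gamma l,l)$. I would split $K(r)=c_l\int h(t)\cos(rt)\,dt+E(r)$, the main term being the contribution of the stabiliser $H_1$, governed by the relative Weyl law $\sum_{|t_j|\le T}\hat u_j^2\sim c_l\,T$, and $E(r)$ the contribution of the regular double cosets, for which $k_r$ is concentrated at distance $\approx|r|$. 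Re-assembling the main term is exact and gives
\[
\sum_{\nu,\mu}b_\nu\overline{b_\mu}\,c_l\!\int h(t)\cos\big((y_\nu-y_\mu)t\big)\,dt=c_l\int h(t)\,\Big|\sum_\nu b_\nu e^{it y_\nu}\Big|^2 dt\ll\big(T+X\delta^{-1}\big)\sum_\nu|b_\nu|^2,
\]
by the analytic large sieve (Montgomery--Vaughan) for the $\delta/(2X)$-separated frequencies $y_\nu$, which is precisely the source of the two terms $T$ and $X\delta^{-1}$.

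The main obstacle is the contribution of the error term $E(r)$, that is, of the regular double cosets. A crude uniform bound $|E(r)|\ll\mathcal E$ is useless, since summing it against $b_\nu\overline{b_\mu}$ costs a factor $R$; instead the relative trace formula must furnish decay in the gap variable, of the shape $|E(r)|\ll\min\big(T,|r|^{-1}\big)$ (equivalently, a relative Weyl law whose error survives on the short scale $|r|\asymp\delta/X$), so that $\sum_{\nu\ne\mu}b_\nu\overline{b_\mu}\,E(y_\nu-y_\mu)$ is again a Hilbert-type bilinear form controlled by the separation and absorbed into $X\delta^{-1}\sum_\nu|b_\nu|^2$. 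Securing this decay uniformly over all gaps, while keeping the majorant $h$ within the class of admissible test functions for the trace formula, is the delicate analytic point; this is exactly the spectral input on periods that the relative trace formula of this paper is designed to provide.
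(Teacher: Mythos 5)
First, note that the paper does not prove Theorem \ref{Sieve} at all: it is imported verbatim from \cite{Lsieve} (Petridis--Voskou), so there is no internal proof to compare against. Your overall skeleton --- duality, the observation that the frequencies $y_\nu=\log x_\nu$ are $\delta/(2X)$-separated, a positive spectral majorant $h\ge \mathbf 1_{[-T,T]}$, and Montgomery--Vaughan for the resulting main term --- is the standard Chamizo-type strategy and is structurally sound. (Two small points you should still address: the duality step and the bound $|e^{it_jy_\nu}|=1$ require $t_j\in\mathbb{R}$, so the finitely many exceptional $t_j$ must be treated separately; and the diagonal input $\sum_{|t_j|\le T}\hat u_j^2\ll T$ should be taken from Huber's Lemma \ref{periods} rather than from any consequence of the sieve itself, to avoid circularity.)

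The genuine gap is that the entire analytic content of the theorem is contained in the off-diagonal estimate you postulate but do not prove, namely $|E(r)|\ll\min\big(T,|r|^{-1}\big)$ uniformly down to gaps $|r|\asymp\delta/X$, where $E(r)=\sum_j h(t_j)\hat u_j^2\cos(rt_j)-c_l\hat h(r)$. Without it your argument recovers only the diagonal term $T\sum_\nu|b_\nu|^2$ together with an uncontrolled bilinear remainder; with a merely uniform bound $|E(r)|\ll T$ the off-diagonal costs a factor $R$, which destroys the inequality. Establishing the decay requires actually executing the geometric side of the relative trace formula for the test function whose Huber transform is $h(t)\cos(rt)$: one must identify the corresponding point-pair weight $q$, bound the sum over regular double cosets $\gamma$ by the counting function $N(\cdot,l)$ (or an a priori upper bound for it), and verify that the resulting estimate is uniform in $r$ over the whole range $[\delta/(2X),\,\log 2]$, including the transition region where the stationary-phase asymptotics of the Legendre/hypergeometric transform change character. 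None of this is routine, and since you explicitly defer it (``the delicate analytic point''), the proposal is a correct reduction of the theorem to its hardest step rather than a proof.
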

Note that this is analogous to Chamizo's large sieve inequality \cite[Theorem 2.2]{Cham1}, who worked with values of Maass forms $u_j(z)$ instead of the periods $\hat{u}_j$. \par
We use Theorem \ref{Sieve} to prove the following result about the mean square of the error term $E(X,l)$:
\begin{proposition}\label{SecondMomProp}
Let $X>2$ and $\delta>0$. Let $X_1,X_2, \dots ,X_R \in [X,2X]$ such that $|X_i-X_j|>\delta$, when $i \ne j$. Then the following bound holds:
$$\sum_{m=1}^R|E(X_m,l)|^2 \ll R^{1/3}X^{4/3}\log X+\delta^{-1}X^2\log^2X.$$
\end{proposition}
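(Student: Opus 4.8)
The plan is to follow the template of \cite[Th.~1.2]{CP}: produce a spectral expansion for $E(X,l)$ in which the contribution of the eigenvalues $\lambda_j\ge 1/4$ is isolated as a finite exponential sum in the periods $\hat u_j$, and then estimate its mean square over the well-spaced points $X_1,\dots,X_R$ by the large sieve of Theorem \ref{Sieve}, optimising a single smoothing/truncation parameter. Everything is driven by the relative trace formula underlying Theorem \ref{mainthm}; the new input that makes the averaging work is precisely Theorem \ref{Sieve}.

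First I would fix a parameter $U\in(1,X)$ and construct smooth majorant and minorant test functions $g^{\pm}$ for the sharp cutoff $\mathbf 1_{\{B(\gamma)<X\}}$, with $g^-\le \mathbf 1_{[0,X]}\le g^+$, constant outside a transition window of width $\asymp U$ around $X$ and with relative-trace-formula transforms that decay rapidly beyond the frequency $T:=X/U$. Feeding $g^{\pm}$ into the relative trace formula gives
\[
\sum_{\gamma}g^{\pm}(B(\gamma))=M(X,l)+O(U)+\mathcal E^{\pm}(X),
\]
where the $O(U)$ measures the distortion of each small-eigenvalue term $X^{s_j}$ under smoothing, and the oscillatory part coming from $\lambda_j\ge1/4$ is
\[
\mathcal E^{\pm}(X)=2X^{1/2}\,\Re\!\!\sum_{0<t_j\le T}c_j\,X^{it_j}\,|\hat u_j|^2,\qquad c_j=\tfrac{2}{\pi}\gamma_1(\tfrac12+it_j).
\]
Sandwiching $N(X,l)$ between $\sum_\gamma g^{\pm}(B(\gamma))$ and subtracting $M(X,l)$ yields the pointwise bound $|E(X,l)|\ll U+|\mathcal E^+(X)|+|\mathcal E^-(X)|$, with no appeal to the pointwise $O(X^{2/3})$ estimate. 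Stirling's formula applied to the explicit $\gamma_1$ gives $|c_j|\ll(1+|t_j|)^{-1/2}$; bounding $\mathcal E^{\pm}$ trivially by $X^{1/2}\sum_{t_j\le T}|c_j|\,|\hat u_j|^2\ll X^{1/2}T^{1/2}$ and choosing $U=X^{2/3}$ already reproves Good's $O(X^{2/3})$, the new proof promised in the introduction.

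For the mean square I keep $U$ free and apply Theorem \ref{Sieve} to $\mathcal E^{\pm}$ after a dyadic splitting of the range $0<t_j\le T$ into blocks $t_j\in(T',2T']$. Using $X_m\asymp X$ and the choice $a_j=c_j\hat u_j$, the sieve gives for each block
\[
\sum_{m=1}^{R}\Big|\,X_m^{1/2}\!\!\sum_{T'<t_j\le 2T'}\!\!c_jX_m^{it_j}|\hat u_j|^2\Big|^2\ll X\,(T'+X\delta^{-1})\!\!\sum_{T'<t_j\le 2T'}\!\!|c_j|^2|\hat u_j|^2 .
\]
Here I would invoke the period mean value $\sum_{t_j\le T}|\hat u_j|^2\ll T$ (the local Weyl law read off from the identity term of the relative trace formula), so that with $|c_j|^2\ll t_j^{-1}$ each block contributes $\sum_{T'<t_j\le 2T'}|c_j|^2|\hat u_j|^2\ll T'^{-1}\cdot T'\ll 1$. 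Summing the $O(\log T)$ blocks gives $\sum_m|\mathcal E^{\pm}(X_m)|^2\ll XT+X^2\delta^{-1}\log X$. Combining this with the $RU^2$ produced by the $O(U)$ smoothing term, and recalling $T=X/U$,
\[
\sum_{m=1}^R|E(X_m,l)|^2\ll RU^2+X^2U^{-1}+X^2\delta^{-1}\log X ,
\]
and the choice $U=(X^2/R)^{1/3}$ balances the first two terms at $R^{1/3}X^{4/3}$, yielding the stated bound after tracking the logarithmic factors.

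The main obstacle is the construction and analysis of the test functions $g^{\pm}$: one must produce majorant/minorant pairs whose relative-trace-formula transform simultaneously reproduces $X^{s_j}$ up to $O(U)$ on the small eigenvalues, carries the coefficient $\tfrac{2}{\pi}\gamma_1(\tfrac12+it_j)$ with the decay $|t_j|^{-1/2}$ on the critical line, and is genuinely truncated (or rapidly decaying) beyond $|t_j|\sim X/U$, so that the tail $t_j>T$ is absorbed into the $O(U)$ error rather than surviving as a separate contribution. The period mean value $\sum_{t_j\le T}|\hat u_j|^2\ll T$ is the other essential analytic input. Once both are in hand, the dyadic application of Theorem \ref{Sieve} and the one-parameter optimisation are routine.
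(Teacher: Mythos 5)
Your overall route is the same as the paper's: sandwich the sharp count between smoothed test functions depending on one parameter ($U$ in your notation, $Y$ in the paper's), isolate the tempered spectrum as an exponential sum in the periods, estimate its mean square over the well-spaced points by dyadic decomposition plus Theorem \ref{Sieve} and the local Weyl law $\sum_{t_j\le T}|\hat u_j|^2\ll T$ (Lemma \ref{periods}), and optimise $U$ against the $RU^2$ smoothing cost. The sieve application, the per-block bound $\sum_{T'<t_j\le 2T'}|c_j|^2|\hat u_j|^2\ll 1$, and the final optimisation $U=(X^2/R)^{1/3}$ all match the paper (you do omit the Cauchy--Schwarz over the $O(\log X)$ dyadic blocks, which is where the $\log X$ and $\log^2X$ factors actually come from, but you flag that the logs need tracking).

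There is, however, one genuine gap, and it sits exactly at the point you yourself identify as ``the main obstacle'': you assume test functions whose transforms are truncated (or rapidly decaying) beyond $t\sim T=X/U$, \emph{and} that the tail $t_j>T$ can then be absorbed into the pointwise $O(U)$ error. These two demands are incompatible in the regime you need. Even with rapid decay, the transition block $t_j\asymp X/U$ contributes, trivially, about $X^{1/2}\cdot(X/U)^{-1/2}\cdot\sum_{t_j\asymp X/U}|\hat u_j|^2\asymp XU^{-1/2}$ pointwise --- this is precisely the dominant term in the pointwise bound \eqref{E_f} that forces $O(X^{2/3})$ --- and $XU^{-1/2}\gg U$ for every $U\ll X^{2/3}$, in particular for the optimal $U=R^{-1/3}X^{2/3}$. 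So the tail cannot be swept into $O(U)$; if you try, the resulting $R\,(XU^{-1/2})^2=RX^2U^{-1}$ term destroys the claimed bound. What the paper actually does is prove (Proposition \ref{mainprop}) that the explicit piecewise-linear smoothings have Huber transforms obeying the two-regime bound $d_t(f^{\pm})\ll X^{1/2}\min\{|t|^{-1/2},|t|^{-3/2}X/Y\}$, keep the entire range $1\le|t_j|\le X^2Y^{-2}$ inside the large-sieve argument (with the improved coefficient decay $|t|^{-3/2}X/Y$ for $|t_j|\ge X/Y$), and only discard the far tail $|t_j|>X^2Y^{-2}$ pointwise, where partial summation and Lemma \ref{periods} give $O(X^{1/2})$. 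Carrying the extended range $X/Y\le|t_j|\le X^2Y^{-2}$ through the sieve turns out to contribute the same order $X^2Y^{-1}\log X+\delta^{-1}X^2\log^2X$ as the range $|t_j|<X/Y$, so the final bound is unchanged --- but this step cannot be skipped, and it is where the bulk of the technical work (the hypergeometric asymptotics behind Proposition \ref{mainprop}(ii)--(iii)) lives.
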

Using Proposition \ref{SecondMomProp}, we prove the following bounds for the second moment of the error term $E(X,l)$:
\begin{theorem}\label{SecondMomthm}
If $R\delta \gg X$ and $R>X^{1/2}$, then
\begin{equation}\label{1/R}
    \frac{1}{R}\sum_{m=1}^R|E(X_m,l)|^2 \ll X\log^2X. 
\end{equation}
Letting $R \to \infty$, we get
\begin{equation}\label{1/X}
    \frac{1}{X}\int_X^{2X}|E(x,l)|^2dx \ll X\log^2X.
\end{equation} 
\end{theorem}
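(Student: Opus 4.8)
The plan is to derive both statements directly from Proposition~\ref{SecondMomProp}, which already carries the large-sieve input; what remains is elementary book-keeping with the hypotheses for the discrete bound, followed by a Riemann-sum limiting argument for the integral bound.

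For \eqref{1/R}, I would simply divide the estimate of Proposition~\ref{SecondMomProp} by $R$, which gives
$$\frac{1}{R}\sum_{m=1}^R|E(X_m,l)|^2 \ll R^{-2/3}X^{4/3}\log X+(R\delta)^{-1}X^2\log^2X.$$
The hypothesis $R>X^{1/2}$ yields $R^{-2/3}<X^{-1/3}$, so the first term is $\ll X^{-1/3}\cdot X^{4/3}\log X = X\log X \ll X\log^2X$. The hypothesis $R\delta\gg X$ yields $(R\delta)^{-1}\ll X^{-1}$, so the second term is $\ll X^{-1}\cdot X^2\log^2X = X\log^2X$. Summing the two contributions proves \eqref{1/R}, with an implied constant depending only on $l$ and, crucially, independent of $R$.

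For \eqref{1/X} the idea is to specialize the points $X_m$ so that the left-hand side of \eqref{1/R} becomes a Riemann sum for the integral. Fix $X$ and, for $R\to\infty$, take the equispaced points $X_m = X + mX/R$ for $m=1,\dots,R$, of mesh $h=X/R$. Choosing $\delta = h/2 = X/(2R)$ makes the separation condition $|X_i-X_j|\ge h>\delta$ hold and gives $R\delta = X/2\gg X$, so both hypotheses of \eqref{1/R} are met uniformly in $R$ once $R>X^{1/2}$. Since $h/X=1/R$, the quantity bounded in \eqref{1/R} is exactly the Riemann sum
$$\frac{1}{R}\sum_{m=1}^R|E(X_m,l)|^2 = \frac{1}{X}\sum_{m=1}^R|E(X_m,l)|^2\,h \xrightarrow[R\to\infty]{} \frac{1}{X}\int_X^{2X}|E(x,l)|^2\,dx.$$
The convergence is legitimate because $|E(x,l)|^2$ is Riemann integrable on $[X,2X]$: the function $N(x,l)$ is a nondecreasing step function with only finitely many jumps on the compact interval (the number of new double cosets there is $\ll X$), while $M(x,l)$ is a finite sum of continuous powers, so $E=N-M$ is bounded with finitely many discontinuities. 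As the bound $\ll X\log^2X$ from \eqref{1/R} holds for every $R>X^{1/2}$ with a constant independent of $R$, it passes to the limit, giving \eqref{1/X}.

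The substantive work is entirely contained in Proposition~\ref{SecondMomProp}; in this theorem the only points requiring care are (a) reconciling the strict separation requirement $|X_i-X_j|>\delta$ with the mesh while preserving $R\delta\gg X$, which is handled by taking $\delta$ a fixed fraction of the mesh, and (b) justifying the interchange of limit and bound, i.e. the convergence of the Riemann sums, which rests on the Riemann integrability of $|E(\cdot,l)|^2$ noted above. I expect (b) to be the only place where a reader might want a line of justification, but it presents no real difficulty.
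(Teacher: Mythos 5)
Your proposal is correct and follows essentially the same route as the paper: divide the bound of Proposition~\ref{SecondMomProp} by $R$, use $R>X^{1/2}$ and $R\delta\gg X$ (equivalently $\delta^{-1}\ll RX^{-1}$) to absorb both terms into $X\log^2X$, and then obtain \eqref{1/X} by taking equispaced points of mesh $X/R$ and passing to the limit of Riemann sums, justified by the finitely many discontinuities of $E(\cdot,l)$ on $[X,2X]$. Your extra care in taking $\delta$ a fixed fraction of the mesh to honour the strict separation $|X_i-X_j|>\delta$ is a harmless refinement of what the paper does.
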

From Theorems \ref{mainthm} and \ref{SecondMomthm} we can formulate the analogous conjecture to \cite[Conj. 5.7]{CP}:
\begin{conjecture}
For every $\epsilon>0$ the error term $E(X,l)$ satisfies the bound
$$E(X,l)=O\big(X^{1/2+\epsilon}\big),$$
where the estimate depends on $l$ and $\epsilon$.
\end{conjecture}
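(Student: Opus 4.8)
The final statement is a conjecture rather than a theorem, and at the stated strength it lies as deep as the (still open) hyperbolic circle problem conjecture; accordingly I describe the natural line of attack and isolate the precise point at which current technology stops. The plan is to start from the explicit spectral expansion underlying Theorem~\ref{mainthm}. Applying the relative trace formula to a test function adapted to the sharp cutoff $B(\gamma)<X$ produces, after separating the small-eigenvalue main term $M(X,l)$, a representation of $E(X,l)$ whose generic term is $\hat{u}_j^2$ weighted by an oscillatory factor of modulus $\asymp X^{1/2}$ and phase $X^{it_j}$, where $s_j=1/2+it_j$ runs over the tempered part of the spectrum. In this language the conjecture is equivalent to exhibiting cancellation in the spectral exponential sum $\sum_{|t_j|\le T}\hat{u}_j^2\,X^{it_j}$, beyond the triangle inequality and uniformly in the individual point $X$.

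The technical route would be the familiar smoothing argument. First I would replace the sharp indicator of $B(\gamma)<X$ by a smooth majorant and minorant that differ only on a window of length $Y$ about $X$, writing $E(X,l)=E_Y^{\mathrm{sm}}(X)+E_Y^{\mathrm{loc}}(X)$. The smoothed piece $E_Y^{\mathrm{sm}}$ carries a rapidly decaying spectral weight, effectively truncating the spectrum at $T\asymp X/Y$, while the de-smoothing remainder $E_Y^{\mathrm{loc}}$ is controlled by the local density
\[
\#\big\{\gamma\in H_1\backslash\Gamma/H_1:\ B(\gamma)\in[X,X+Y]\big\}\ll Y+X^{\vartheta}
\]
for a suitable $\vartheta<1$, the $Y$-term being the expected main contribution and $X^{\vartheta}$ the admissible fluctuation. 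Balancing $Y$ against a bound for the truncated spectral sum is exactly the optimization that yields Good's pointwise exponent $2/3$ from the trivial bound on that sum; to reach $1/2+\epsilon$ one must instead feed in genuine cancellation among the phases $X^{it_j}$.

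Here Theorem~\ref{Sieve} and the mean-square bound of Theorem~\ref{SecondMomthm} it produces already deliver the conjectured exponent on average: the root-mean-square of $E(x,l)$ over $[X,2X]$ is $\ll X^{1/2}\log X$. The decisive remaining step---and the main obstacle---is to upgrade this $L^2$ control to an $L^\infty$ bound, i.e.\ to rule out points where $|E(X,l)|$ exceeds its typical size by more than $X^{\epsilon}$. This passage is genuinely hard for two reasons. First, $N(X,l)$ is a step function whose individual jumps are already of the order of the conjectured error, so one cannot merely combine the average bound with a variation or monotonicity estimate to localize it at a single $X$. Second, the large sieve is an inherently $L^2$ device and does not see cancellation at one point; obtaining such cancellation would require either near square-root cancellation in the spectral exponential sums above---of the same depth as the Gauss circle problem---or strong subconvex bounds on the periods $\hat{u}_j$ combined with the vertical spacing of the $t_j$. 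Absent such input the statement must remain conjectural, which is exactly why it is recorded here as a conjecture.
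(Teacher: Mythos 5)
You are right to treat this statement as unprovable with current methods: the paper itself offers no proof, stating the conjecture purely as motivated by Theorem \ref{SecondMomthm}, whose mean-square bound gives a root-mean-square size $\ll X^{1/2}\log X$ for $E(x,l)$ on $[X,2X]$, in direct analogy with the conjecture of Chatzakos--Petridis --- exactly the evidence you cite. Your further analysis of the obstruction (the sharp cutoff versus smoothing trade-off yielding the exponent $2/3$, and the impossibility of upgrading the $L^2$ large-sieve control to a pointwise bound without genuine cancellation in the spectral sums $\sum_{|t_j|\le T}\hat{u}_j^2 X^{it_j}$) is consistent with the paper's proofs of Theorems \ref{mainthm} and \ref{SecondMomthm} and correctly identifies why the statement remains conjectural.
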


\subsection{Structure of the paper}
In section \ref{Preliminaries} we explain Huber's framework and some of his main results on the counting problem corresponding to orbits by double cosets $H_1\backslash \Gamma /H_2$, where $H_2$ is an elliptic subgroup of $\Gamma$.
In section \ref{A relative trace formula} we develop a new relative trace formula (Theorem \ref{traceformula}) that is suitable for our counting problem.
In section \ref{The geometric side of the trace formula} we choose suitable test functions in the relative trace formula to relate to the counting $N(X,l)$.
In section \ref{The spectral side of the relative trace formula} we analyze the spectral side of the relative trace formula for our choice of test functions. In section \ref{Proof of counting} we prove Theorem \ref{mainthm} and in section \ref{meansquare} we prove Theorem \ref{SecondMomthm}. 

\section{Preliminaries}\label{Preliminaries}
Our main reference for this section is \cite{IW}. 
Let $\mathbb{H}$ be the hyperbolic upper half-plane 
equipped with the hyperbolic metric 
$$ds^2= \frac{dx^2+dy^2}{y^2},$$
and  measure element  
$d\mu(z)={dxdy}/{y^2}.$
For two points $z,w\in \mathbb{H}$ we define the point-pair invariant $u$ by
\begin{equation}\label{uzw}
    u(z,w)= \frac{|z-w|^2}{4\Im z\Im w}.
\end{equation}
The hyperbolic distance $\rho$ is given by
\begin{equation}\label{rho}
\cosh \rho(z,w) = 1+2u(z,w).
\end{equation}
We study functions $f : \mathbb{H} \rightarrow \mathbb{C}$ periodic under the action of $\Gamma$, i.e. 
\begin{equation}\label{eqn:ac}
f(\gamma z)= f(z),\quad \gamma \in \Gamma ,\quad z\in \mathbb{H}.
\end{equation}
We call these functions automorphic. 

The  inner product of two functions $f,g \in L^2(\Gamma \backslash \mathbb{H})$ is defined by
$$\langle f,g \rangle = \int_{\Gamma \backslash \mathbb{H}} f(z)\overline{g(z)}d\mu(z),$$
and the hyperbolic Laplace operator is 
$$ \Delta = -y^2 \Bigg( \frac{\partial^2}{\partial x^2} + \frac{\partial^2}{\partial y^2} \Bigg).$$
A smooth function $f \in L^2(\Gamma \backslash \mathbb{H}$) that satisfies the automorphic condition \eqref{eqn:ac} and is also an eigenfunction of the Laplace operator is called a Maass form. We denote these by $u_j$ with $\lambda_j$ the corresponding eigenvalues.

An element $g$ of ${\rm PSL}(2,\mathbb{R})$ is called hyperbolic if $|\Tr(g)|>2$.
Huber studied the lattice counting problem in conjugacy classes. Let $\mathfrak{T}$ be a hyperbolic conjugacy class, and
let $\mu$ be the length of the corresponding closed geodesic.
The lattice counting problem in conjugacy classes counts
\begin{equation}\label{NT}
N(t,\mathfrak{T},z)=\# \Big\{\gamma \in \mathfrak{T} \: | \: \rho(\gamma z ,z) \leq t \Big\}.
\end{equation}
Huber in \cite[Satz B]{HuHel} proved that
$$N(t,\mathfrak{T},z) \sim c\frac{\mu}{\sinh\big(\frac{\mu}{2}\big)}e^{t/2},$$
as $t \to \infty$.
We notice that the asymptotic growth is exponential in $t$, since Huber counts the number of $\gamma \in \mathfrak{T}$ such that $\rho(\gamma z ,z) \leq t$. With a parametrization $X=\sinh(t/2)/\sinh(\mu/2)$, the growth is polynomial.\\
We now explain Huber's framework in \cite{Hu98}. We introduce a new system of coordinates $(u,v)$ on the upper half-plane: 
let $z=x+iy$ and define \begin{equation}\label{uv}
u(z)=\log|z| \quad  \text{and} \quad  v(z)=-\arctan\Big(\frac{x}{y}\Big)  .\end{equation} 
The range of  these variables is $$-\infty<u(z)<+\infty \quad \text{and} \quad -\frac{\pi}{2}<v(z)<\frac{\pi}{2} . $$
\\
From the definitions of $u$ and $v$ it follows that $$\cos v(z)=\frac{y}{|z|} \quad \text{and} \quad \sin v(z)= -\frac{x}{|z|}.$$
Moreover, for a diagonal hyperbolic element $P= \begin{bmatrix}
\lambda & 0 \\
0 & \lambda^{-1}
\end{bmatrix}$ we have (see equations \cite[p. 19, Eq.23]{Hu98}) 
$$ u(Pz)=u(z)+2\log\lambda \quad \text{and} \quad v(Pz)=v(z),$$
where $\lambda^2$ is the norm of $P$. Hence, the action of diagonal hyperbolic elements does not change the coordinate $v$.  
The hyperbolic metric now becomes $ds^2=\big(du(z)^2+dv(z)^2\big)\big/\cos^2v(z)$.

Huber \cite[p. 16, Eq. 4]{Hu98} studied the spectral expansion of the series
$$A(f)(z)=\sum_{\gamma \in \mathfrak{T}}f\Bigg(\frac{\cosh(z,\gamma z)-1}{\cosh \mu -1}\Bigg),$$
for $f\in C_0^{*}[1,\infty)$: the space of real functions of
compact support that are bounded in $[1,\infty)$ and have at most finitely many discontinuities. Since $\Gamma$ is cocompact and $f$ has compact support, $A(f)(z)$ is finite.
Using \cite[p.~20, Eq. 26]{Hu98}, Huber rewrote this series in the coordinates $u,v$. For $\mathfrak{T}$ the conjugacy class of $P$, the result is
\begin{equation}\label{Afz}
A(f)(z)=\sum_{\gamma \in \langle P \rangle \backslash \Gamma}f\Big(\frac{1}{\cos ^2v(\gamma z)}\Big).
\end{equation}
The function $A(f)(z)$ is automorphic and with appropriate choice of $f$ can be used to study $N(t,\mathfrak{T},z)$ via its spectral expansion.
\begin{theorem}\label{hub}\rm \cite[17]{Hu98}
The automophic function $A(f)(z)$ has the following spectral expansion:
$$A(f)(z)= \sum_{j}c_j(f)u_j(z),$$
where $c_j(f)=2\hat{u}_jd_{t_j}(f)$. The transform $d_t(\cdot)$, called Huber transform, is given by
\begin{equation}\label{df}
    d_{t}(f)= \int_{0}^{\pi/2}f\Big(\frac{1}{\cos^2v}\Big)\frac{\xi_{\lambda}(v)}{\cos^2v}dv,
\end{equation}
with $\lambda=1/4+t^2$.
The function $\xi_{\lambda}$ is the solution to the differential equation
$$\xi_{\lambda}''(v)+\frac{\lambda}{\cos^2v}\xi_{\lambda}(v)=0,\quad  v\in \Big(-\frac{\pi}{2},\frac{\pi}{2}\Big),$$
with initial conditions $\xi_{\lambda}(0)=1$ and $\xi_{\lambda}'(0)=0$.
\end{theorem}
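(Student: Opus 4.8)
The plan is to read off the coefficients $c_j(f)$ as spectral projections and to compute them by unfolding. Since $\G$ is cocompact, $f$ has compact support, and $1/\cos^2 v\ge 1$, the automorphic function $A(f)$ is bounded, hence lies in $L^2(\GmodH)$; as the discrete spectrum $\{u_j\}$ is complete (there is no continuous spectrum in the cocompact case), we may write $A(f)=\sum_j c_j(f)u_j$ with $c_j(f)=\inprod{A(f)}{u_j}$, the series converging in $L^2(\GmodH)$ (and uniformly for smooth $f$). So the whole theorem reduces to evaluating $\inprod{A(f)}{u_j}$.

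First I would unfold. Using \eqref{Afz}, the invariance $f(1/\cos^2 v(Pz))=f(1/\cos^2 v(z))$, and the fact that each $u_j$ is real-valued and $\G$-automorphic, the standard unfolding of the sum over $\langle P\rangle\backslash\G$ against a fundamental domain of $\G$ gives
\[
c_j(f)=\int_{\langle P\rangle\backslash\H} f\Big(\frac{1}{\cos^2 v(z)}\Big)u_j(z)\,d\mu(z).
\]
Then I pass to the coordinates $(u,v)$ of \eqref{uv}. Because $P$ acts by $u\mapsto u+\mu$ and fixes $v$, a fundamental domain for $\langle P\rangle\backslash\H$ is the strip $\{0\le u<\mu,\ -\pi/2<v<\pi/2\}$, and the metric $ds^2=(du^2+dv^2)/\cos^2 v$ gives $d\mu(z)=du\,dv/\cos^2 v$. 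Since the weight $f(1/\cos^2 v)/\cos^2 v$ depends only on $v$, I integrate in $u$ first and set $U_j(v):=\int_0^\mu u_j(u,v)\,du$, so that
\[
c_j(f)=\int_{-\pi/2}^{\pi/2} f\Big(\frac{1}{\cos^2 v}\Big)\frac{U_j(v)}{\cos^2 v}\,dv.
\]

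The crux is to show that $U_j$ is governed by the same ODE as $\xi_{\lambda_j}$. In the $(u,v)$ coordinates the Laplacian is $\Delta=-\cos^2 v\,(\partial_u^2+\partial_v^2)$, so $\Delta u_j=\lambda_j u_j$ reads $\partial_u^2 u_j+\partial_v^2 u_j=-\lambda_j u_j/\cos^2 v$. Integrating this over $u\in[0,\mu]$ and using that $u_j(Pz)=u_j(z)$ makes $u_j$ $\mu$-periodic in $u$ (so $\int_0^\mu\partial_u^2 u_j\,du=0$), while differentiating under the integral in the second term (legitimate since $u_j$ is smooth and we stay on compact $v$-subintervals of $(-\pi/2,\pi/2)$), I obtain
\[
U_j''(v)+\frac{\lambda_j}{\cos^2 v}\,U_j(v)=0.
\]
Moreover $U_j(0)=\int_0^\mu u_j(u,0)\,du=\int_l u_j\,ds=\hat u_j$, since the closed geodesic $l$ is the locus $v=0$, along which $ds=du$. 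Because the coefficient $\lambda_j/\cos^2 v$ is even, the even part $U_j^{\mathrm{even}}(v)=\tfrac12\big(U_j(v)+U_j(-v)\big)$ solves the same equation with $U_j^{\mathrm{even}}(0)=\hat u_j$ and $(U_j^{\mathrm{even}})'(0)=0$; by uniqueness for the initial value problem it equals $\hat u_j\,\xi_{\lambda_j}(v)$. Finally the kernel $f(1/\cos^2 v)/\cos^2 v$ is even in $v$, so the odd part of $U_j$ contributes nothing and, using evenness of $\xi_{\lambda_j}$,
\[
c_j(f)=\hat u_j\int_{-\pi/2}^{\pi/2} f\Big(\frac{1}{\cos^2 v}\Big)\frac{\xi_{\lambda_j}(v)}{\cos^2 v}\,dv=2\hat u_j\int_0^{\pi/2} f\Big(\frac{1}{\cos^2 v}\Big)\frac{\xi_{\lambda_j}(v)}{\cos^2 v}\,dv=2\hat u_j\,d_{t_j}(f),
\]
which is \eqref{df} and proves the claim.

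The main obstacle I anticipate is not any single computation but the justification of the interchanges: that the unfolding is valid (guaranteed here because the sum defining $A(f)$ is finite and $f$ is compactly supported), that one may differentiate $U_j$ under the integral sign and discard the $\partial_u^2$ boundary term by periodicity, and---conceptually the key point---that the parity of the weight forces only the even solution $\xi_{\lambda_j}$ (with the correct normalization $U_j(0)=\hat u_j$) to survive, so the odd solution of the Huber ODE never enters. I would also record that $\sum_j c_j(f)u_j$ converges a priori only in $L^2(\GmodH)$, upgrading to pointwise equality under the smoothness hypotheses used later.
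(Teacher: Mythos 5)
The paper does not prove this statement: Theorem \ref{hub} is imported from Huber \cite{Hu98} without proof, so there is no internal argument to compare against. Your reconstruction --- unfold $\inprod{A(f)}{u_j}$ over $\langle P\rangle\backslash\H$, pass to the $(u,v)$ strip where $d\mu=du\,dv/\cos^2v$, average in $u$ so that periodicity kills $\int_0^\mu\partial_u^2u_j\,du$ and yields the Huber ODE for $U_j(v)=\int_0^\mu u_j\,du$ with $U_j(0)=\hat u_j$, then use evenness of the weight and uniqueness of the initial value problem to replace the even part of $U_j$ by $\hat u_j\,\xi_{\lambda_j}$ --- is correct and is essentially Huber's own argument. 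The only caveat worth recording is the one you already flag: for $f\in C_0^{*}[1,\infty)$ the expansion is a priori an $L^2(\GmodH)$ identity, and the pointwise use made of it later rests on the smooth majorant/minorant construction the paper carries out in any case.
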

We can write $\xi_{\lambda}(v)$ in terms of a sum of Legendre functions, see \cite[p. 5]{CP} and \eqref{Leg}:
$$ \xi_{\lambda}(v)= \frac{1}{2\sqrt{\pi}}\Gamma\Big(\frac{s+1}{2}\Big)\Gamma\Big(1-\frac{s}{2}\Big)(P_{s-1}(i\tan v)+P_{s-1}(-i\tan v)),$$ and, after substituting $x=\tan v$, the transform takes the form
\begin{equation}\label{d(f)}
d_t(f)= \frac{1}{2\sqrt{\pi}}\Gamma\Big(\frac{s+1}{2}\Big)\Gamma\Big(1-\frac{s}{2}\Big) \int_0^{\infty} f(x^2+1)(P_{s-1}(ix)+P_{s-1}(-ix))dx.
\end{equation}
The Huber transform $d_{t}(f)$, is the analogue to the Selberg--Harish-Chandra transform. For the exact relation between them, see \cite{Lsieve}.
Huber showed that, by choosing $f$ to be the characteristic function on the interval $\Big[0,\sqrt{X^2-1}\Big]$, we get 
$A(f)(z)= N(t,\mathfrak{T},z)$. The main result is the following theorem:
\begin{theorem}\rm \cite[p. 983]{CP}\label{CPthm}
We have
$$N(t,\mathfrak{T},z)=\sum_{1/2<s_j\le1}A(s_j)\hat{u}_ju_j(z)X^{s_j}+O\big(X^{2/3}\big),$$
where $A(s)$ is a combination of Gamma functions and special functions, see \cite[Eq. 1.5]{CP}. 
\end{theorem}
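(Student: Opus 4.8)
The plan is to specialize the spectral identity of Theorem \ref{hub} to a sharp cutoff, extract a main term from the small eigenvalues, and estimate the remaining spectral contribution. Following Huber's observation, I would take $f$ to be the characteristic function of $\big[0,\sqrt{X^2-1}\big]$, so that $A(f)(z)=N(t,\mathfrak{T},z)$ exactly and the expansion of Theorem \ref{hub} reads
$$N(t,\mathfrak{T},z)=\sum_{j}2\hat{u}_j\,d_{t_j}(f)\,u_j(z).$$
Everything then reduces to understanding the Huber transform $d_{t_j}(f)$ for this particular $f$, for which I would use the Legendre representation \eqref{d(f)}, namely
$$d_t(f)=\frac{1}{2\sqrt{\pi}}\Gamma\Big(\frac{s+1}{2}\Big)\Gamma\Big(1-\frac{s}{2}\Big)\int_0^{\sqrt{X^2-1}}\big(P_{s-1}(ix)+P_{s-1}(-ix)\big)\,dx.$$

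For the main term I would isolate the finitely many small eigenvalues $1/2<s_j\le 1$ and compute the leading asymptotics of $d_{t_j}(f)$ as $X\to\infty$. The relevant input is the large-argument behavior of the Legendre function, $P_{s-1}(\pm ix)\asymp x^{s-1}$, which after integration over $\big[0,\sqrt{X^2-1}\big]$ produces a term of size $X^{s_j}$. Collecting the Gamma factors into a single constant $A(s_j)$ then yields the contribution $A(s_j)\hat{u}_j u_j(z)X^{s_j}$. Since $\Re(s_j)>1/2$ for these eigenvalues, each is a genuine power strictly larger than the target error exponent, so they survive as main terms, exactly as asserted.

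The substance of the proof is the error estimate, i.e.\ bounding the contribution of the tempered spectrum $s_j=1/2+it_j$ with $t_j$ real. The difficulty is that $f$ is a sharp cutoff, so $d_t(f)$ decays too slowly in $t$ for the spectral sum to be summed directly. I would resolve this by the standard Selberg smoothing: construct smooth majorant and minorant functions $f^{+}\ge f\ge f^{-}$ agreeing with $f$ outside a window of width $\eta$ around the cutoff. For smooth $f^{\pm}$, repeated integration by parts in \eqref{df} forces rapid decay of $d_t(f^{\pm})$ in $t$, effectively truncating the spectral sum at $|t_j|\ll \eta^{-1}$; the size of the truncated tempered sum is then controlled by a mean-value (local Weyl) bound for the periods of the shape $\sum_{|t_j|\le T}|\hat{u}_j|^2\ll T$, giving a contribution of order $X^{1/2}\eta^{-1/2}$. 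The geometric error $A(f^{+})(z)-A(f^{-})(z)$ counts conjugacy-class elements in a thin hyperbolic shell and is bounded by $\ll \eta X$.

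Balancing the two error sources $\eta X$ and $X^{1/2}\eta^{-1/2}$ by choosing $\eta\asymp X^{-1/3}$ makes both of order $X^{2/3}$, which gives the stated bound. I expect the main obstacle to be precisely this error analysis: it demands uniform control of $P_{s-1}(\pm ix)$ simultaneously in the argument $x$ and in the spectral parameter $t$, both to quantify the decay of $d_t(f^{\pm})$ and to justify the shell estimate, together with a careful justification of the interchange of summation and spectral expansion for the smoothed test functions, which is legitimate thanks to the rapid decay produced by the smoothing.
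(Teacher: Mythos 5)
The paper does not prove this statement: Theorem \ref{CPthm} is quoted from \cite{CP}, and the only ``proof'' here is the citation. That said, your outline reproduces the strategy of \cite{CP}, which is also exactly the architecture this paper uses for its own Theorem \ref{mainthm}: sharp cutoff $f=\chi$ so that $A(f)(z)=N(t,\mathfrak{T},z)$, main term from the finitely many $1/2<s_j\le 1$ via the large-argument asymptotics of $P_{s-1}(\pm ix)$, Selberg-type majorants/minorants $f^{\pm}$ with a window of relative width $\eta=Y/X$, a geometric shell error $O(Y)$, and a spectral error balanced against it at $Y\asymp X^{2/3}$. Your numerology ($\eta\asymp X^{-1/3}$, both errors of size $X^{2/3}$) is the right one.

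Two cautions on the error analysis, which you yourself identify as the substance of the proof. First, the spectral coefficients in this problem are $\hat{u}_j\,u_j(z)$, not $\hat{u}_j^{\,2}$, so the period mean-value bound $\sum_{|t_j|\le T}|\hat{u}_j|^2\ll T$ alone does not control the tempered sum; you also need the pointwise local Weyl law $\sum_{|t_j|\le T}|u_j(z)|^2\ll T^2$ and Cauchy--Schwarz on dyadic blocks, and the resulting block bound $T^{3/2}$ must then be matched against the precise decay rate of $d_t(f^{\pm})$ (compare Proposition \ref{mainprop}(ii), where the smoothing buys exactly one extra power of $|t|$ beyond $|t|\asymp X/Y$) to actually land on a spectral error of $XY^{-1/2}$ rather than something weaker; this is where \cite{CP} do real work and where your sketch is thinnest. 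Second, ``repeated integration by parts forces rapid decay'' overstates what the piecewise-linear smoothing used here and in \cite{CP} delivers --- one gains a single factor $|t|^{-1}\cdot X/Y$, which is just enough for convergence against the local Weyl laws, not superpolynomial decay; if you instead take $f^{\pm}$ genuinely smooth you must still track the transition at $|t|\asymp\eta^{-1}$ uniformly in $X$, which again requires uniform two-variable control of $P_{s-1}(\pm ix)$, as you correctly anticipate.
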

Huber also proved the following important Lemma about the periods $\hat{u}_j$:
\begin{lem}\label{periods}
For the sequence of period integrals $\{\hat{u}_j\}_{j=0}^{\infty}$, the following bound holds:
$$\sum_{|t_j|\le T}|\hat{u}_j|^2\ll T.$$
\end{lem}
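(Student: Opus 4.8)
The plan is to read the bound off the spectral expansion of Theorem \ref{hub} by a Parseval argument, using the test function as an approximate spectral projector onto the window $|t_j|\le T$. Since the $u_j$ are real and orthonormal for $\Gamma$ cocompact, and $\hat u_j=\int_l u_j\,ds$ is real, Theorem \ref{hub} gives $A(f)(z)=\sum_j 2\hat u_j\,d_{t_j}(f)\,u_j(z)$, so Parseval yields
$$\int_{\GmodH}|A(f)(z)|^2\,d\mu(z)=4\sum_j|\hat u_j|^2\,\big|d_{t_j}(f)\big|^2.$$
Hence, if for each $T>1$ I can produce $f=f_T\in C_0^{*}[1,\infty)$ whose Huber transform satisfies $|d_t(f_T)|\gg 1$ uniformly for $|t|\le T$, then every term on the right is non-negative and
$$\sum_{|t_j|\le T}|\hat u_j|^2\ll\sum_j|\hat u_j|^2\,\big|d_{t_j}(f_T)\big|^2=\tfrac14\int_{\GmodH}|A(f_T)(z)|^2\,d\mu(z),$$
reducing the lemma to the geometric estimate $\|A(f_T)\|_2^2\ll T$.

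For the construction of $f_T$ I would work directly with the representation \eqref{df}, exploiting that $\xi_\lambda(0)=1$ and $\xi_\lambda'(0)=0$ for every $\lambda$. Concretely, take $f_T(1+x^2)$ to be a non-negative bump concentrated on $|x|\lesssim 1/T$ with unit total mass $\int_0^\infty f_T(1+x^2)\,dx\asymp 1$ (so of height $\asymp T$). After the substitution $x=\tan v$, equation \eqref{df} reads $d_t(f_T)=\int_0^\infty f_T(1+x^2)\,\xi_\lambda(\arctan x)\,dx$, which is concentrated where $v$ is of size $O(1/T)$. On that range the differential equation $\xi_\lambda''=-\tfrac{\lambda}{\cos^2 v}\xi_\lambda$ with $\lambda=1/4+t^2$, together with a Gr\"onwall-type bound, gives $\xi_\lambda(v)=1+O(t^2v^2)$ uniformly for $|t|\le T$; choosing the concentration scale $c_0/T$ with $c_0$ small forces $d_t(f_T)=1+O(1)\gg 1$ on the whole window $|t|\le T$. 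This uniform lower bound across the entire range $|t|\le T$ (rather than merely pointwise) is the step I expect to require the most care, since it is precisely the control of $\xi_\lambda$ on the shrinking interval that drives the saving from $T^2$ to $T$.

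For the geometric side I would unfold one of the two sums. Writing $F(z)=f_T(1/\cos^2 v(z))$, so that $A(f_T)(z)=\sum_{\gamma\in\langle P\rangle\backslash\Gamma}F(\gamma z)$ by \eqref{Afz}, unfolding gives
$$\|A(f_T)\|_2^2=\sum_{\gamma\in\langle P\rangle\backslash\Gamma}\int_{\langle P\rangle\backslash\mathbb H}F(z)\,\overline{F(\gamma z)}\,d\mu(z).$$
The diagonal term $\gamma=e$ is computed in the $(u,v)$-coordinates: $F$ depends on $v$ alone, $d\mu=du\,dv/\cos^2 v$, and the $u$-integral over one period contributes the geodesic length $\mu$, so with $x=\tan v$ this term equals $\mu\int_{-\infty}^{\infty}|f_T(1+x^2)|^2\,dx\asymp\mu T$, exactly the target order. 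Since $F$ is supported within $O(1/T)$ of the axis $I$, a term with $\gamma\neq e$ is nonzero only when $\gamma^{-1}I$ passes within $O(1/T)$ of $I$; the distances $\mathrm{dist}(\gamma I,I)$ form a discrete set that is bounded below away from $0$ apart from the finitely many self-intersection (exceptional) cosets, so for $T$ large only those finitely many $\gamma$ survive, each contributing $O(1)$ (a transverse crossing at angle $\theta$ gives an overlap of measure $\asymp T^{-2}/\sin\theta$ on which $|F|\asymp T$). The remaining bounded range of $T$ is trivial. Combining, $\|A(f_T)\|_2^2=\mu T+O(1)\ll T$, which with the reduction above proves the lemma.
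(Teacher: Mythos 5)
The paper does not actually prove Lemma \ref{periods}: it is quoted from Huber \cite{Hu98} (and the improvement from Tsuzuki), so there is no in-paper argument to compare against line by line. Your proposal is, however, essentially the standard (Huber-style) proof, and it is sound: Bessel's inequality for the expansion $A(f)(z)=\sum_j 2\hat u_j d_{t_j}(f)u_j(z)$ gives $4\sum_j|\hat u_j|^2|d_{t_j}(f_T)|^2\le\|A(f_T)\|_{L^2(\GmodH)}^2$ (you do not even need full Parseval), the Gr\"onwall bound $\xi_\lambda(v)=1+O(\lambda v^2)$ on $|v|\le c_0/T$ does yield $d_{t_j}(f_T)\ge 1/2$ uniformly for all $|t_j|\le T$ including the finitely many imaginary $t_j$, and the diagonal term of the unfolded $L^2$ norm is $\asymp\mu T$. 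This route has the advantage over integrating the pretrace kernel along $l$ (i.e.\ over using Theorem \ref{traceformula} directly) that the spectral weights $|\hat u_j|^2|d_{t_j}|^2$ are automatically nonnegative, so no positivity of the Huber transform itself is needed. Two points deserve more care in a write-up. First, "$d_t(f_T)=1+O(1)\gg1$" should read $1+O(c_0^2)$ with $c_0$ fixed small, so that the quantity is bounded below by a positive constant. Second, in the off-diagonal estimate the surviving terms are indexed by cosets in $\Ginf$-free form $\langle P\rangle\backslash\Gamma$, not by the finitely many exceptional double cosets in $\langle P\rangle\backslash\Gamma/\langle P\rangle$: each exceptional double coset contains infinitely many single cosets $\langle P\rangle\gamma P^n$, and you must argue (by discreteness of $\Gamma$ and compactness of the segment $l$) that only $O(1)$ of the translated geodesics $P^{-n}\gamma^{-1}I$ meet the $O(1/T)$-tube around a fundamental segment of $I$, uniformly in $T$; a cleaner alternative is $\|A(f_T)\|_2^2\le\|F\|_{L^1(\langle P\rangle\backslash\H)}\|A(f_T)\|_\infty\ll\mu\cdot T$, using Theorem \ref{CPthm} with $X=1+O(T^{-2})$ to bound the number of nonzero summands in $A(f_T)(z)$ by $O(1)$. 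With these repairs the argument is complete.
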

Tsuzuki \cite[Th. 1, p.~2388]{Tsuz} improved this by proving
$$ \sum_{0 \le t_j < T}|\hat{u}_j|^2 \sim  \frac{\hbox{len}(l)}{\pi}T, \quad T \to \infty .$$
Let $$E(X,z):=N(t,\mathfrak{T},z)-\sum_{1/2<s_j\le1}A(s_j)\hat{u}_ju_j(z)X^{s_j}$$
be the error term in the hyperbolic conjugacy class problem. Chatzakos--Petridis also showed the following results about the second moment of the error term $E(X,z)$ for $\Gamma$ cocompact or cofinite:
\begin{theorem}\rm \cite[Th. 1.2]{CP}\label{secondmomC}
The following bound for the error term holds 
$$ \frac{1}{X}\int_X^{2X}|E(x,z)|^2dx \ll X\log^2X.$$
\end{theorem}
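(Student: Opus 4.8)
The plan is to read off $E(X,z)$ from the spectral expansion of Theorems \ref{hub} and \ref{CPthm}, and then extract cancellation from the oscillation of the large-eigenvalue terms by means of a large sieve inequality, following Chamizo \cite{Cham1}. Recall from Theorem \ref{hub} that if $f$ is the characteristic function of $[0,\sqrt{X^2-1}]$, then $N(t,\mathfrak{T},z)=A(f)(z)=\sum_j 2\hat{u}_j d_{t_j}(f)u_j(z)$. The terms with $1/2<s_j\le1$ yield, after isolating their leading power of $X$, exactly the main term $\sum_{1/2<s_j\le1}A(s_j)\hat{u}_j u_j(z)X^{s_j}$; everything else — the lower-order pieces of the small-eigenvalue terms together with the entire sum over the parameters $t_j$ with $\lambda_j\ge 1/4$ (plus the continuous-spectrum integral when $\Gamma$ is only cofinite) — constitutes $E(X,z)$. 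It therefore suffices to bound this oscillatory spectral sum in mean square over $x\in[X,2X]$.

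First I would determine the behaviour of the Huber transform $d_t(f)$ of the sharp cutoff as $X\to\infty$, starting from the Legendre representation \eqref{d(f)}. Asymptotics of $P_{s-1}(\pm ix)$ for large $t$, evaluated by stationary phase, should give $d_t(f)\asymp X^{1/2}t^{-3/2}$ times an oscillatory factor $\cos(t\log X+\varphi_t)$, the merely polynomial decay $t^{-3/2}$ being the hallmark of a distance cutoff. Because the cutoff is discontinuous, this decay is too weak for absolute convergence, so the sum must be organised dyadically in $t_j$; and if one wishes to argue with honest functions, $f$ may be replaced by smooth majorant and minorant test functions agreeing with it outside a shell of width $Y$, the resulting discrepancy counting orbit points in a thin annulus and contributing $\ll XY$ on average.

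The core estimate is then to bound
\begin{equation*}
\frac{1}{X}\int_X^{2X}\Big|\, x^{1/2}\!\!\sum_{|t_j|\le T}\frac{\hat{u}_j u_j(z)}{t_j^{3/2}}\,x^{it_j}\Big|^2\,dx ,
\end{equation*}
where I have substituted the leading oscillatory part of $d_{t_j}(f)$. Discretising $[X,2X]$ into $R$ well-spaced points and invoking Chamizo's large sieve for the values $u_j(z)$ \cite{Cham1}, the counterpart of Theorem \ref{Sieve} with saving $T^2+X\delta^{-1}$, applied to the coefficients $a_j=\hat{u}_j\,t_j^{-3/2}$ with $u_j(z)$ carried by the sieve, reduces the problem to estimating $\sum_{|t_j|\le T}|a_j|^2=\sum_{|t_j|\le T}|\hat{u}_j|^2 t_j^{-3}$. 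The period bound of Lemma \ref{periods} makes this sum convergent by partial summation, and the same $t_j^{-3}$ weight renders the tail $|t_j|>T$ negligible once the oscillation is exploited. The logarithmic factors appear from summing the contributions dyadically in $t_j$ and from the transition region of the sieve, and optimising $T$ (and the smoothing width $Y$) delivers the bound $X\log^2X$.

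The main obstacle is the slow, purely polynomial decay of $d_t(f)$ forced by the sharp distance cutoff: it destroys absolute convergence of the spectral sum, so the whole estimate rests on deploying the large sieve efficiently rather than trivial summation, and on knowing the precise power in the transform asymptotics. Tracking the parameters so that the smoothing error $\ll XY$, the tail beyond $T$, and the large-sieve contribution from $|t_j|\le T$ all balance at exactly $X\log^2X$ — rather than a larger power of $X$ or of $\log X$ — is the delicate point, and it is where the exact constants in the asymptotics of $d_t(f)$, not merely their order of magnitude, must be controlled.
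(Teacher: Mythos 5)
Your overall architecture is the right one, and it is essentially the route taken in \cite[Prop.~5.3, Th.~5.4]{CP} (and mirrored in this paper's proof of Theorem \ref{SecondMomthm}): smooth the cutoff by a parameter $Y$, expand spectrally, decompose dyadically in $t_j$, apply Chamizo's large sieve for the values $u_j(z)$ with the periods $\hat{u}_j$ absorbed into the coefficients, control $\norm{\mathbf{a}}_*^2$ by Lemma \ref{periods}, and optimise $Y$ and $\delta$. You also correctly identify the saving $T^2+X\delta^{-1}$ in the value-aspect sieve, as opposed to $T+X\delta^{-1}$ in Theorem \ref{Sieve}.

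However, there is a genuine quantitative gap: the decay you assign to the Huber transform of the sharp cutoff, $d_t(f)\asymp X^{1/2}t^{-3/2}$, is wrong, and the whole balance of the argument rests on this exponent. For the transform \eqref{d(f)} the correct behaviour of the smoothed cutoff is
$d_t(f^{\pm})\ll X^{1/2}\min\bigl\{|t|^{-1/2},\,|t|^{-3/2}X/Y\bigr\}$, as in Proposition \ref{mainprop}(ii)--(iii); the unsmoothed cutoff therefore decays only like $X^{1/2}|t|^{-1/2}$ (this is the size of $\gamma_1(1/2+it)$ in \eqref{Gammas}), the period problem losing a full power of $t$ relative to the classical ball cutoff whose Selberg--Harish-Chandra transform does decay like $t^{-3/2}$. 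Consequently the smoothing is not, as you present it, an optional device for ``arguing with honest functions'': it is the sole source of the $|t|^{-3/2}$ decay, which is available only for $|t|\gtrsim X/Y$, and the bound $X\log^2X$ emerges precisely from the three-way balance between the smoothing cost $RY^2$ (note the annulus contributes $O(Y)$ per sample point, not $O(XY)$, since the main term is linear in $X$), the range $T\le X/Y$ where only $|t|^{-1/2}$ is available, and the tail $T>X/Y$ where $|t|^{-3/2}X/Y$ takes over --- this is exactly the splitting into $A_1,A_2,A_3$ in the proof of Proposition \ref{SecondMomProp}. With your coefficients $a_j=\hat{u}_jt_j^{-3/2}$ the norm $\norm{\mathbf{a}}_*^2\ll T^{-2}$ would make the sieve output misleadingly small and the stated optimisation could not be carried out as written; with the true exponent $t^{-1/2}$ your unsmoothed sum does not even converge in a form the sieve can handle. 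So the proposal identifies the correct machinery but would fail at the step you yourself flag as delicate, namely pinning down the transform asymptotics.
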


This theorem suggests that the correct order of growth for the error term $E(X,z)$ is $O\big(X^{1/2+\epsilon}\big)$.\par

Parkkonen and Paulin used ergodic methods and, more specifically, the geodesic flow to study the hyperbolic lattice counting problem in conjugacy classes in \cite{PP}. They gave an asymptotic for the counting of common perpendicular arcs in negative curvature, see \cite[Th.~1, p.~901] {parkkonen_paulin_2017}.

\section{A relative trace formula}\label{A relative trace formula}
In this section we investigate a relative trace formula suitable for the problem of counting in the double coset $H_1 \backslash \Gamma / H_1$. \\
We let $\Gamma$ be cocompact. Moreover, we assume that $\Gamma$ has no elements with zeros on the diagonal. For simplicity, we assume that $L$ is a primitive closed geodesic of $\Gamma \backslash\mathbb{H}$. By conjugation we can assume that the axis of the closed geodesic $L$ is the imaginary axis $I$, so that $L= H_1\backslash I$, where $H_1$ is a hyperbolic subgroup of $\Gamma$ of the form 
$$ H_1 =  \Biggl \langle \begin{bmatrix}
m & 0 \\
0 & m^{-1}
\end{bmatrix} \Biggr \rangle, \quad m>1. $$ 
Let $l$ represent the closed geodesic $L$ on the imaginary axis. Here $m^2$ is the norm of the primitive closed geodesic $L$.
It is known that $\hbox{len}(l)=2\log m$. Let $k:\big(-\pi/2,\pi/2\big) \rightarrow \mathbb{R}$ be a test function that depends only on $v$, the angle defined in \eqref{uv}. In practice we also assume that $k$ is even. We consider its automorphization with respect to $\Gamma$:
\begin{equation}\label{Kz}
    K(z):= \sum_{\gamma \in H_1 \backslash \Gamma} k(v(\gamma z)).
\end{equation}
Firstly, we develop the geometric side of the relative trace formula by integrating $K$ over the geodesic segment $l$. \\
We split the sum in \eqref{Kz} into the identity coset $H_1$ and the rest of the cosets. For the coset $H_1$ and $z=iy \in l$ we have that $k(v(\gamma z))=k(v(z))=k(0)$. Hence, we compute
\begin{equation}\label{k0}
    \int_{l} K(z)ds= k(0)\hbox{len}(l) + \int_{l} \sum_{\gamma \in H_1 \backslash \Gamma-H_1} k(v(\gamma z))ds.
\end{equation}
For the second sum we have
\begin{align}\label{geK}
    \begin{split}
        \int_{l} \sum_{\gamma \in H_1 \backslash \Gamma-H_1} k(v(\gamma z))ds &= \sum_{\gamma \in H_1\backslash \Gamma - H_1 /H_1} \sum_{\gamma_0 \in H_1} \int_lk(v(\gamma \gamma_0 z))ds \\
        &= \sum_{\gamma \in H_1\backslash \Gamma - H_1 /H_1} \int_I{k(v(\gamma z))ds},
    \end{split}
\end{align}
using that $H_1$ is the stabilizer of $l$. We notice that the action  of $H_1$ on $l$ covers the whole imaginary axis $I$.\\
We apply Theorem \ref{hub} for $$k(v(z)):=f\Bigg(\frac{1}{\cos^2(v(z))}\Bigg),$$
with appropriate choice of $f$.
In order to ensure convergence of the series $K(z)=A(f)(z)$ we assume that either $f \in C_0^{*}[1,\infty)$ or $f$ is in the Schwartz class. In the first case the series is finite, while in the latter case we can write $K(z)$ as a Stiltjes integral:
\begin{equation}\label{Stil}
    \sum_{\gamma \in H_1\backslash \Gamma} f\Bigg(\frac{1}{\cos^2v(\gamma z)}\Bigg)= \int_0^{\pi/2}f\Bigg(\frac{1}{\cos^2v}\Bigg)d\widetilde{N}(v,z),
\end{equation}
where 
$$\widetilde{N}(V,z):=\#\big\{\gamma \in H_1 \backslash \Gamma \: \big| \: v(\gamma z) \le V \big\}.$$ For $X=1/\cos{v}$ we have 
$$ \widetilde{N}(v,z)=N(t,\mathfrak{T},z) \ll X,$$
using Theorem \ref{CPthm}. Applying integration by parts on the integral in \eqref{Stil}, we see that $K(z)$ converges, assuming that $f$ and its derivatives are rapidly decreasing.

By Theorem \ref{hub}  we get
\begin{equation}\label{spK}
 K(z)= \sum_{j}2d_{t_j}(f)\hat{u}_ju_j(z).
\end{equation}
The spectral side of the relative trace formula comes from integrating \eqref{spK} over $l$. Equating \eqref{k0} and \eqref{geK} with \eqref{spK} gives:
$$ f(1)\cdot \hbox{len}(l) + \sum_{\gamma \in H_1\backslash \Gamma - H_1 /H_1} \int_{I}  f\Bigg(\frac{1}{\cos^2v(\gamma z)}\Bigg)ds = \sum_{j}2d_{t_j}(f)\hat{u}_j^2.$$
In order to evaluate 
$$N(X,l)= \sum_{\substack{\gamma \in H_1\backslash \Gamma/H_1 \\ B(\gamma)<X}}1, $$
where $B(\gamma)=|ad+bc|$,
we analyse further the geometric side, specifically the integral

$$J_{\gamma}^I(f)= \int_{I}f\Bigg(\frac{1}{\cos^2v(\gamma z)}\Bigg)ds = \int_{0}^{\infty}f\Bigg(\frac{1}{\cos^2v(\gamma \cdot iy)}\Bigg) \frac{dy}{y}.$$
We express $J_{\gamma}^I(f)$ in terms of the matrix entries of $\gamma$. We show the following result.
\begin{lem}\label{cosv}
Let $z=iy$ and $\gamma= \begin{bmatrix}
a & b \\
c & d
\end{bmatrix} $, then 
$$ \frac{1}{\cos ^2v(\gamma z)}= \frac{(a^2y^2+b^2)(c^2y^2+d^2)}{y^2}.$$

\end{lem}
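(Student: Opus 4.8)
The plan is to reduce everything to the two elementary formulas already recorded in the excerpt, namely $\cos v(z)=y/|z|$ for $z=x+iy$, together with the standard transformation law for the imaginary part under a Möbius map. From $\cos v(z)=\Im z/|z|$ we immediately obtain the identity
$$\frac{1}{\cos^2 v(z)}=\frac{|z|^2}{(\Im z)^2},$$
valid at any point of $\mathbb{H}$. Applying this at the point $\gamma z$ rather than $z$, the lemma is reduced to computing the two quantities $|\gamma z|^2$ and $\Im(\gamma z)$ when $z=iy$.

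First I would evaluate the Möbius action on the imaginary axis directly: for $z=iy$ one has
$$\gamma z=\frac{a\,iy+b}{c\,iy+d}=\frac{b+iay}{d+icy},$$
so that taking moduli of numerator and denominator gives
$$|\gamma z|^2=\frac{a^2y^2+b^2}{c^2y^2+d^2}.$$
For the imaginary part I would invoke the automorphy relation $\Im(\gamma z)=\Im z/|cz+d|^2$, which holds for any $\gamma\in\mathrm{PSL}_2(\mathbb{R})$ because $\det\gamma=1$; specializing to $z=iy$ yields $\Im(\gamma z)=y/(c^2y^2+d^2)$, and hence $(\Im\gamma z)^2=y^2/(c^2y^2+d^2)^2$.

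Substituting both expressions into the reduced identity gives
$$\frac{1}{\cos^2 v(\gamma z)}=\frac{|\gamma z|^2}{(\Im\gamma z)^2}=\frac{(a^2y^2+b^2)/(c^2y^2+d^2)}{y^2/(c^2y^2+d^2)^2}=\frac{(a^2y^2+b^2)(c^2y^2+d^2)}{y^2},$$
which is exactly the claimed formula. There is no genuine obstacle here: the computation is entirely routine once the geometric quantity $1/\cos^2 v$ is rewritten in terms of $|z|$ and $\Im z$. The only points meriting a modicum of care are to remember that the normalization $\det\gamma=1$ is what makes the clean form of the imaginary-part transformation law available, and to observe that the factor $c^2y^2+d^2$ appearing in $|\gamma z|^2$ partially cancels against the squared factor arising from $(\Im\gamma z)^2$, leaving only a single power of it in the final answer.
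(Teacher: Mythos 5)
Your proof is correct and follows exactly the route the paper indicates: the paper's own proof is just the one-line remark that this is an elementary calculation using $\cos v(z)=y/|z|$, and you have carried out precisely that calculation, combining $1/\cos^2 v(\gamma z)=|\gamma z|^2/(\Im\gamma z)^2$ with $|\gamma z|^2=(a^2y^2+b^2)/(c^2y^2+d^2)$ and $\Im(\gamma z)=y/(c^2y^2+d^2)$. Nothing to add.
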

\begin{proof}
The proof is an elementary calculation using $\cos v(z)=y/|z|$.

\end{proof}
Lemma \ref{udist} was also proved with small typos in \cite{MMW}. For completeness we present its proof:
\begin{proof}[Proof of Lemma \ref{udist}]
From \cite[p.~9, Eq.~9]{MMW} we have that 
$$4u(\gamma \cdot ix,iy)=\frac{a^2x}{y}+\frac{b^2}{xy}+c^2xy+d^2\frac{y}{x}-2.$$
Let $h(x,y)=4u(\gamma \cdot ix,iy)$. We want to find the minimum value of $h$, hence we compute its gradient:
\begin{equation}\label{h_x}
    h_x(x,y)=0 \implies y^2=\frac{a^2x^2-b^2}{d^2-c^2x^2}.
\end{equation}
Using this value of $y$ in the equation of $h_y(x,y)=0$, we get the solutions
$$x^4=\frac{b^2d^2}{a^2c^2}.$$
Working in a similar way we find that the solutions to the equation $h_x(x,y)=0$ are 
$$y^4=\frac{a^2b^2}{c^2d^2}.$$
Let $x_{\min}^2=|(bd)/(ac)|$ and $y_{\min}^2=|(ab)/(cd)|$. Then we compute
$$h(x_{\min},y_{\min})=2|ad|+2|bc|-2.$$

Suppose that $abcd<0$. Then, there are two cases: either (i)\:$ad>0$ and $bc<0$, or (ii)\:$ad<0$ and $bc>0$. The second case is not possible because $ad-bc=1$. In the first case we have $h(x_{\min},y_{\min})=0$.

On the other hand, if $abcd>0$, then either (i)\:$ad>0$ and $bc>0$, or (ii)\:$ad<0$ and $bc<0$. In both cases we have
$$h(x_{\min},y_{\min})=2|ad+bc|-2 \; .$$
Moreover, the point $(x_{\min},y_{\min})$ minimizes $h$ because $h(x,y)$ tends to $\infty$, as $x$ or $y\to 0^+$  or $\infty$. More precisely, there exist constants $C,M \in \mathbb{R}_{>0}$ such that for all $N\ge M$ and for all $(x,y)\notin[1/N,N]^2, h(x,y)\ge C\cdot N$.
\end{proof}
Using Lemma \ref{cosv}, we see that the integral $J_\gamma^I(f)$ takes the form
$$J_{\gamma}^I(f)=\int_{0}^{\infty} f\Bigg(a^2c^2y^2+\frac{b^2d^2}{y^2}+a^2d^2+b^2c^2\Bigg)\frac{dy}{y}.$$
Similarly to \cite[p.~9]{MMW}, we introduce the change of variables $y=e^t$. We have
$$ J_{\gamma}^I(f)= \int_{-\infty}^{\infty}f\big(a^2c^2e^{2t}+b^2d^2e^{-2t}+a^2d^2+b^2c^2\big)dt.$$
Let $p=a'e^{2t}+b'e^{-2t}$, with $a'=a^2c^2$ and $b'=b^2d^2$, so that $dp=\big(2a'e^{2t}-2b'e^{-2t}\big)dt$. We compute:
$$dt=\frac{dp}{2a'e^{2t}-2b'e^{-2t}}= \frac{dp}{2\sqrt{p^2-4a'b'}},$$
since
\begin{align}
    p^2-4a'b' &= \big(a'\big)^2e^{4t}+\big(b'\big)^2e^{-4t}+2a'b'-4a'b'= \big(a'e^{2t}-b'e^{-2t}\big)^2\notag \\
    &= \big(a^2c^2e^{2t}-b^2d^2e^{-2t}\big)^2 \notag.
\end{align}
In order to find the lower limit of the integral after the change of variables to $p$, we need to compute
$\min_{\substack{t \in \mathbb{R}}}(a^2c^2e^{2t}+b^2d^2e^{-2t})$. Using the first derivative test, we end up with:
\begin{align} \notag
    2a^2c^2e^{2t}-2b^2d^2e^{-2t} =0 \Leftrightarrow
    e^{4t} = \frac{b^2d^2}{a^2c^2} \Leftrightarrow
    t = \frac{1}{2}\log \Big|\frac{bd}{ac}\Big|. \notag
\end{align}
We conclude that
$$J_{\gamma}^I(f)= \int_{2|abcd|}^{\infty} f\big(p+a^2d^2+b^2c^2\big) \frac{dp}{\sqrt{p^2-4a^2b^2c^2d^2}}.$$
Assuming that $\Gamma$ is torsion-free, we have that $B(\gamma)\neq 1$ (see \cite[Lemma 8]{Tsuz}).
Suppose that $\gamma$ satisfies that $abcd>0$, namely $B(\gamma)>1$. Since $ad-bc=1$, we immediately get that $a^2d^2+b^2c^2= 2abcd+1$. Now let $q=p+2abcd$, which gives us
$$ J_{\gamma}^I(f)=\int_{4abcd}^{\infty}\frac{f(q+1)}{\sqrt{q(q-4abcd)}}dq.$$
In order to match our test function with the ones defined in \cite{Hu98} and \cite{CP}, we finally introduce the change of variables $x^2=q$. 
The integral becomes
$$ J_{\gamma}^I(f)=2 \int_{\sqrt{4abcd}}^{\infty}\frac{f(x^2+1)}{\sqrt{x^2-4abcd}}dx.$$
We recall at this point that  $$B(\gamma)=ad+bc.$$
We can relate the quantity $4abcd$ with $B(\gamma)$. We have:
\begin{align}
  \begin{split}
 B(\gamma)^2-1= 4abcd, \notag
  \end{split}
\end{align}
which leads to
$$ J_{\gamma}^I(f)= 2\int_{\sqrt{B(\gamma)^2-1}}^{\infty}\frac{f(x^2+1)}{\sqrt{x^2-(B(\gamma)^2-1)}}dx.$$
This is the contribution to the geometric side of our relative trace formula for the double coset of $\Gamma$ in $H_1 \backslash \Gamma / H_1$. \\
Now, suppose that $abcd<0$, i.e. $B(\gamma)<1$. Following the same process we find that 
$$ J_{\gamma}^I(f)= 2\int_{0}^{\infty}\frac{f(x^2+1)}{\sqrt{x^2-(B(\gamma)^2-1)}}dx.$$
Let 
\begin{equation}\label{qz}
q(z)= 2\int_{\sqrt{z^2-1}}^{\infty}\frac{f(x^2+1)}{\sqrt{x^2-(z^2-1)}}dx,
\end{equation}
and
\begin{equation}\label{qtilde}
\widetilde{q}(z)=2\int_{0}^{\infty}\frac{f(x^2+1)}{\sqrt{x^2-(z^2-1)}}dx.
\end{equation}
We have proved the following formula:
\begin{theorem}[Relative Trace Formula]\label{traceformula}
Let $\Gamma$ be torsion-free and cocompact. Let also $f \in C_0^{*}[1,\infty)$ or $f:[1, \infty) \to \mathbb{R}$ be in the Schwartz class. For $q$ as in \eqref{qz} and $\widetilde{q}$ as in \eqref{qtilde} and $d_t(f)$ as in \eqref{d(f)}, we have:
\begin{align}
    \begin{split}
        f(1)\hbox{\rm len}(l)&+\sum_{\substack{\gamma \in H_1 \backslash \Gamma -H_1 /H_1 \\ B(\gamma)<1}}\widetilde{q}(B(\gamma))
        +\sum_{\substack{\gamma \in H_1 \backslash \Gamma -H_1 /H_1 \\ B(\gamma)>1}}q(B(\gamma))=\sum_{j}2d_{t_j}(f)\hat{u}_j^2.
    \end{split}
\end{align}
\end{theorem}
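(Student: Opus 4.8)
The plan is to obtain the identity by computing the integral $\int_l K(z)\,ds$ of the automorphic kernel \eqref{Kz} in two independent ways: the group-theoretic expansion yields the geometric side, and the spectral expansion \eqref{spK} yields the spectral side. Equating the two evaluations gives the trace formula. Throughout I set $k(v) = f\bigl(1/\cos^2 v\bigr)$, so that $K(z) = A(f)(z)$ and Theorem \ref{hub} applies.

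For the geometric side I would first isolate the trivial double coset in \eqref{Kz}. Along $l$ one has $v(z)=0$, hence $k(0)=f(1/\cos^2 0)=f(1)$, and the identity coset contributes $f(1)\,\hbox{len}(l)$. The remaining cosets are handled by the unfolding \eqref{geK}: since $H_1$ is exactly the stabilizer of $l$ and its action sweeps $l$ across the whole imaginary axis $I$, the sum over $H_1\backslash\Gamma - H_1$ integrated over $l$ collapses to a sum over double-coset representatives $H_1\backslash\Gamma - H_1/H_1$ with $\int_l$ replaced by $\int_I$. It then remains to evaluate the orbital integral $J_\gamma^I(f) = \int_I f\bigl(1/\cos^2 v(\gamma z)\bigr)\,ds$. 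Using Lemma \ref{cosv} to express $1/\cos^2 v(\gamma z)$ through the entries of $\gamma$, the substitutions $y=e^t$, then $p = a^2c^2 e^{2t}+b^2d^2 e^{-2t}$, and finally $x^2 = p + 2abcd$ reduce $J_\gamma^I(f)$ to a one-dimensional integral whose lower limit is dictated by $\min_t\bigl(a^2c^2e^{2t}+b^2d^2e^{-2t}\bigr)$. Splitting on the sign of $abcd$, equivalently on whether $B(\gamma)\gtrless 1$ via $B(\gamma)^2-1 = 4abcd$, produces exactly $q(B(\gamma))$ from \eqref{qz} when $B(\gamma)>1$ and $\widetilde q(B(\gamma))$ from \eqref{qtilde} when $B(\gamma)<1$.

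For the spectral side I would invoke Theorem \ref{hub} in the form \eqref{spK}, namely $K(z) = \sum_j 2 d_{t_j}(f)\hat u_j u_j(z)$, and integrate term by term over $l$. Because the period of $u_j$ along $l$ is precisely $\hat u_j = \int_l u_j\,ds$, each term contributes $2 d_{t_j}(f)\hat u_j\cdot\hat u_j$, and integrating \eqref{spK} gives $\sum_j 2 d_{t_j}(f)\hat u_j^2$. Setting this equal to the geometric side assembled above yields the asserted formula.

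The main obstacle is analytic rather than algebraic: one must justify the convergence of $K(z)$ and the legitimacy of interchanging summation with integration over $l$, both in the unfolding \eqref{geK} and in integrating the spectral expansion. In the compactly supported case $f\in C_0^*[1,\infty)$ the sum \eqref{Kz} is finite and no question arises. In the Schwartz case I would argue through the Stieltjes-integral representation \eqref{Stil}, combined with the pointwise bound $\widetilde N(v,z)\ll 1/\cos v$ supplied by Theorem \ref{CPthm}; integration by parts then shows that $K(z)$ and the relevant tails decay rapidly enough for Fubini to apply. Once these interchanges are justified, the equality of the two evaluations of $\int_l K(z)\,ds$ is the formula.
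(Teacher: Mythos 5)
Your proposal is correct and follows essentially the same route as the paper: both evaluate $\int_l K(z)\,ds$ for $K$ as in \eqref{Kz} by unfolding the non-identity cosets to the orbital integrals $J_\gamma^I(f)$ over $I$ on the geometric side, reduce those integrals via Lemma \ref{cosv} and the same chain of substitutions to $q(B(\gamma))$ or $\widetilde{q}(B(\gamma))$ according to the sign of $abcd$, and equate with the term-by-term integration of Huber's spectral expansion \eqref{spK}, handling convergence in the Schwartz case through the Stieltjes representation \eqref{Stil}. No substantive difference from the paper's argument.
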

\begin{remark}
    The case of $\Gamma$ a cofinite subgroup is discussed in \cite{Voskou}. 
\end{remark}

\section{The geometric side of the trace formula}\label{The geometric side of the trace formula}
For the counting problem $N(X,l)$ we want to choose a test function $f$ so that $q(B(\gamma))=1$, whenever $B(\gamma)<X$, namely $q$ is the characteristic function on the interval $[0,X]$. However, this is not continuous and we will need to use smoothings of the characteristic function instead. Motivated by the Selberg--Harish-Chandra transform, we want to write $q$ as a Weyl integral (see \cite[Chapter XIII]{Ederlyi} for the definition of such integrals). Using the substitution $u=x^2+1$, we rewrite \eqref{qz} as
$$ q(z)=\int_{z^2}^{+\infty} \frac{f(u)}{\sqrt{u-1}}\frac{du}{\sqrt{u-z^2}}.$$
We set $$F(u)= \frac{f(u)}{\sqrt{u-1}}, $$
and
\begin{equation}\label{gv}
    g(v)= \int_{v}^{\infty} \frac{F(u)}{\sqrt{u-v}} du.
\end{equation} 
From \cite[Eq. 1.64]{IW} we can recover $F$ and $f$. We get:
\begin{equation}\label{Fu}
    F(u)= -\frac{1}{\pi}\int_u^{\infty}\frac{1}{\sqrt{v-u}} dg(v).
\end{equation}
The outcome is 
$$q(z)=g(z^2).$$
Now, we choose $g$ to be piecewise linear so that $q$ is a smoothing of the characteristic function. Let $H=Y^2+2YX$. We choose \begin{equation}\label{defg+}
 g(y)= 
\begin{cases}
1, & y \le X^2, \\
\displaystyle{\frac{(X+Y)^2-y}{H}}, & X^2 \le y \le (X+Y)^2, \\
0, & y \ge (X+Y)^2.
\end{cases} 
\end{equation}
We now compute $F$ using \eqref{Fu}.\\
{\it Case 1}: When $u>(X+Y)^2$, then $F(u)=0.$\\
{\it Case 2}: For $u<X^2$ we get 
$$ F(u)= -\frac{1}{\pi} \int_{X^2}^{(X+Y)^2} \frac{1}{\sqrt{v-u}}g'(v)dv,$$
since the derivative of $g$ is $0$ for $u<X^2$. We compute:
\begin{equation}
\begin{split}
    F(u) &= \frac{1}{\pi H} \int_{X^2}^{(X+Y)^2} \frac{1}{\sqrt{v-u}}dv=  \frac{2}{\pi H}\Big( \sqrt{(X+Y)^2-u}-\sqrt{X^2-u}\Big). \notag
\end{split}
\end{equation}
Hence, for $u<X^2$, we get $$f(u)= \frac{2}{\pi H}\sqrt{u-1}\Big(\sqrt{(X+Y)^2-u}-\sqrt{X^2-u}\Big).$$
Since in \eqref{qz} $f$ appears as $f(x^2+1)$, we write
$$ f(x^2+1)= \frac{2}{\pi H}x \Big(\sqrt{(X+Y)^2-x^2-1}-\sqrt{X^2-x^2-1}\Big).$$
After setting $a=\sqrt{X^2-1}$ and $A=\sqrt{(X+Y)^2-1}$ the previous expression becomes
$$ f(x^2+1)= \frac{2}{\pi H}x \Big(\sqrt{A^2-x^2}-\sqrt{a^2-x^2}\Big).$$\\
{\it Case 3}:
When $X^2 \le u \le (X+Y)^2$ or, equivalently, $a \le x \le A$, for $u=x^2+1$, we get
$$F(u)= -\frac{1}{\pi} \int_{u}^{(X+Y)^2} \frac{1}{\sqrt{v-u}}g'(v)dv,$$
and, with a similar computation:
$$ f(x^2+1)= \frac{2}{\pi H}x\sqrt{A^2-x^2}.$$
We conclude that
\begin{equation} \label{f+}
f(x^2+1)= 
\begin{cases}
{\frac{2}{\pi H}x \Big(\sqrt{A^2-x^2}-\sqrt{a^2-x^2}\Big)}, & x \le a, \\ 
{\frac{2}{\pi H}x\sqrt{A^2-x^2}}, & a \le x \le A, \\
0, & \text{else}.
\end{cases} 
\end{equation}
Since the corresponding $q$ is an overestimate of the characteristic function of the interval $[0,X]$ we denote this function as $q^{+}$, and, similarly, we denote $f$ by $f^{+}$ and $g$ by $g^{+}$. After defining 
\begin{equation}\label{defg-} g^{-}(y)= 
\begin{cases}
1, & y \le X^2-H, \\
\displaystyle{\frac{X^2-y}{H}}, & X^2-H \le y \le X^2, \\
0, & y \ge X^2,
\end{cases} 
\end{equation}
and repeating the same process, we obtain the test function
\begin{equation} \label{f-}
f^{-}(x^2+1)=
\begin{cases}
{\frac{2}{\pi H}x \Big(\sqrt{a^2-x^2}-\sqrt{T^2-x^2}\Big)}, & x \le T, \\
{\frac{2}{\pi H}x\sqrt{a^2-x^2}}, & T \le x \le a, \\
0, & \text{else},
\end{cases} 
\end{equation}
where $T=\sqrt{X^2-H-1}$. 

Let $g^{+}=g$ and $g^{-}$  be the functions defined in \eqref{defg+} and \eqref{defg-} respectively. Given these choices of $f^{-},f^{+}$, let $\widetilde{g}^{-}(z^2)=\widetilde{q}^{-}(z)$ and $\widetilde{g}^{+}(z^2)=\widetilde{q}^{+}(z)$ be the corresponding functions for the exceptional terms, namely
$$\widetilde{g}^{+}(z^2)=2\int_0^{\infty}\frac{f^{+}(x^2+1)}{\sqrt{x^2-(z^2-1)}}dx,$$ and
$$\widetilde{g}^{-}(z^2)=2\int_0^{\infty}\frac{f^{-}(x^2+1)}{\sqrt{x^2-(z^2-1)}}dx.$$
We show the following result for sums of exceptional terms including $\widetilde{g}^{+}$ and $\widetilde{g}^{-}$:
\begin{lem}\label{Exc}
We have 
\begin{equation}
\sum_{\substack{{\gamma \in H_1 \backslash \Gamma /H_1} \\ B(\gamma) \leq 1 }}\widetilde{g}^{+}(B(\gamma)^2)=O(1)
\end{equation}
and 
\begin{equation}
\sum_{\substack{{\gamma \in H_1 \backslash \Gamma /H_1} \\ B(\gamma) \leq 1 }}\widetilde{g}^{-}(B(\gamma)^2)=O(1).
\end{equation}
\end{lem}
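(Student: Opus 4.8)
The plan is to reduce the two sums to finitely many individual terms and then bound each one uniformly in $X$. Recall that there are only finitely many exceptional double cosets $\gamma\in H_1\backslash\Gamma/H_1$ with $B(\gamma)\le 1$ (equivalently $abcd<0$), and that this set is determined by $\Gamma$ and $l$ alone, independently of the counting parameter $X$. Hence it suffices to show that for each fixed such $\gamma$ one has $\widetilde g^{+}(B(\gamma)^2)=O(1)$ and $\widetilde g^{-}(B(\gamma)^2)=O(1)$ as $X\to\infty$; summing the finitely many bounds then gives the claim.

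Fix an exceptional $\gamma$ and write $B=B(\gamma)$. Since $\Gamma$ is torsion-free we have $B\neq 1$, hence $B<1$, and the quantity $c_0:=1-B^2$ is strictly positive. The decisive structural point is that for $B<1$ we have $z^2-1=B^2-1=-c_0<0$, so the integrand in $\widetilde g^{\pm}$ has no singularity and the lower limit is $0$: $\widetilde g^{\pm}(B^2)=2\int_0^\infty f^{\pm}(x^2+1)\,(x^2+c_0)^{-1/2}\,dx$. This is exactly what makes the exceptional terms so much smaller than the regular ones, whose integrals carry an inverse-square-root singularity at the lower endpoint.

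Next I would insert the explicit test functions $f^{\pm}$ from \eqref{f+} and \eqref{f-} and split each integral over the two pieces of its support. The key algebraic simplification is the identity $A^2-a^2=H$ (and likewise $a^2-T^2=H$), which rewrites the difference of square roots on the inner piece as $\sqrt{A^2-x^2}-\sqrt{a^2-x^2}=H\big(\sqrt{A^2-x^2}+\sqrt{a^2-x^2}\big)^{-1}$, so that the growing factor $1/H$ cancels. On the inner piece $[0,a]$ one is then left with $\tfrac{4}{\pi}\int_0^a \tfrac{x}{\sqrt{x^2+c_0}\,(\sqrt{A^2-x^2}+\sqrt{a^2-x^2})}\,dx$; bounding the denominator below by $2\sqrt{a^2-x^2}$ and substituting $w=x^2$ reduces this to $\tfrac{1}{\pi}\int_0^{a^2}\big((w+c_0)(a^2-w)\big)^{-1/2}\,dw$, which is at most $\tfrac1\pi\int_{-c_0}^{a^2}\big((w+c_0)(a^2-w)\big)^{-1/2}\,dw=1$ for every $c_0>0$. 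On the outer piece $[a,A]$ one uses $\sqrt{A^2-x^2}\le\sqrt H$ and $x/\sqrt{x^2+c_0}\le 1$ to obtain a bound of size $(A-a)/\sqrt H=\sqrt H/(A+a)=O(\sqrt{Y/X})=O(1)$, assuming $Y\le X$ as in the applications. The $\widetilde g^{-}$ case is identical after replacing $(A,a)$ by $(a,T)$ and using $a^2-T^2=H$.

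The part requiring care, and the only real obstacle, is the inner piece: the naive bound $x/\sqrt{x^2+c_0}\le 1$ there yields $\asymp a/\sqrt H\asymp\sqrt{X/Y}$, which diverges for $Y\ll X$. Retaining the factor $x/\sqrt{x^2+c_0}$, which is small for small $x$, and recognizing the resulting integral as a truncated arcsine/Beta integral bounded by $\pi$ uniformly in $c_0>0$, is what makes the estimate uniform. With that in hand each exceptional term is $O(1)$, and since there are finitely many of them the two sums are $O(1)$, as claimed.
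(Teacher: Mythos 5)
Your proof is correct, but it takes a more laborious route than the paper's. Both arguments begin the same way: finitely many exceptional double cosets, and the observation that $B(\gamma)^2-1\le 0$ kills the endpoint singularity in $\widetilde q^{\pm}$. From there the paper uses the single uniform inequality $\bigl(x^2-(B(\gamma)^2-1)\bigr)^{-1/2}\le x^{-1}$ on all of $(0,\infty)$; the factor $x^{-1}$ then cancels the explicit factor $x$ in $f^{+}(x^2+1)$, and the remaining integral is computed \emph{exactly} as a difference of quarter-circle areas, $\frac{2}{\pi H}\bigl(\frac{\pi A^2}{4}-\frac{\pi a^2}{4}\bigr)=\frac{A^2-a^2}{2H}=\frac12$, with no case split and no hypothesis relating $Y$ to $X$. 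You instead split the support at $x=a$, handle the inner piece via the identity $\sqrt{A^2-x^2}-\sqrt{a^2-x^2}=H\bigl(\sqrt{A^2-x^2}+\sqrt{a^2-x^2}\bigr)^{-1}$ and a truncated Beta integral, and handle the outer piece by crude bounds that require $Y\ll X$ (true in all the paper's applications, but an extra hypothesis the paper's argument does not need). Your diagnosis of the ``only real obstacle'' is also somewhat overstated: even with the bound $x/\sqrt{x^2+c_0}\le 1$ the inner piece stays bounded, since $\frac{4}{\pi}\int_0^a\frac{dx}{\sqrt{A^2-x^2}+\sqrt{a^2-x^2}}\le\frac{4}{\pi}\int_0^a\frac{dx}{\sqrt{A^2-x^2}}=\frac{4}{\pi}\arcsin(a/A)\le 2$; the apparent divergence $a/\sqrt H$ only arises if one also replaces the denominator by the constant $\sqrt{H}$. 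In short: your argument is valid and yields the lemma, but the paper's one-line majorization is sharper, shorter, and unconditional in $Y$.
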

\begin{proof}
For $B(\gamma)\le 1$ we notice that 
$$ \widetilde{g}^{+}\big(B(\gamma)^2\big)\le \int_0^{\infty}\frac{f^{+}(x^2+1)}{x}dx = \frac{2}{\pi H}\Bigg(\int_0^{A}\sqrt{A^2-x^2}dx-\int_0^a\sqrt{a^2-x^2}dx\Bigg).$$
After the change of variables $x=A\sin t$ for the first integral and $x=a\sin t$ for the second integral we see that
\begin{equation}\label{g+}
    \widetilde{g}^{+}\big(B(\gamma)^2\big)\le \frac{2}{\pi H}\frac{(A^2-a^2)\pi}{4} \le 1.
\end{equation}
A similar calculation gives that $\widetilde{g}^{-}\big(B(\gamma)^2\big) \le 1$.
By discreteness and inequality \eqref{g+} we conclude that the sums for the exceptional terms are $O(1)$.
\end{proof}

Moreover, we arrive at the following estimate:
\begin{proposition}
Let $g^{+}=g$ and $g^{-}$  be the functions defined in \eqref{defg+} and \eqref{defg-} respectively. Then we have the following estimates for $N(X,l)$:

    \begin{equation}\label{Nineq}
\sum_{\substack{{\gamma \in H_1 \backslash \Gamma /H_1} \\ B(\gamma) \leq 1 }}1+\sum_{\substack{{\gamma \in H_1 \backslash \Gamma -H_1 /H_1} \\ B(\gamma)>1 }}g^{-}\big(B(\gamma)^2\big) \le N(X,l) \le \sum_{\substack{{\gamma \in H_1 \backslash \Gamma /H_1} \\ B(\gamma)\leq1 }}1+\sum_{\substack{{\gamma \in H_1 \backslash \Gamma -H_1 /H_1} \\ B(\gamma)>1 }}g^{+}\big(B(\gamma)^2\big).
\end{equation}
\end{proposition}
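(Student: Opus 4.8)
The plan is to reduce the proposition to an elementary pointwise comparison between the piecewise-linear profiles $g^\pm$ and the indicator of the counting condition $B(\gamma)<X$. First I would rewrite the count as
\[
N(X,l)=\sum_{\gamma\in H_1\backslash\Gamma/H_1}\mathbf{1}_{[0,X)}\big(B(\gamma)\big),
\]
and split the double cosets according to whether $B(\gamma)\le 1$ or $B(\gamma)>1$; recall that every element of $H_1$ has $B=1$ and, by torsion-freeness, no regular double coset has $B(\gamma)=1$. Since $X>1$, each double coset with $B(\gamma)\le 1$ is automatically counted by $N(X,l)$, contributing precisely the common term $\sum_{B(\gamma)\le 1}1$ that appears on both sides of \eqref{Nineq}. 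It therefore suffices to sandwich the remaining sum $\sum_{B(\gamma)>1}\mathbf{1}_{[0,X)}(B(\gamma))$ between $\sum_{B(\gamma)>1}g^-(B(\gamma)^2)$ and $\sum_{B(\gamma)>1}g^+(B(\gamma)^2)$.

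The heart of the argument is the pointwise inequality
\[
g^-(z^2)\le \mathbf{1}_{[0,X)}(z)\le g^+(z^2),\qquad z\ge 0,
\]
which I would check directly from \eqref{defg+} and \eqref{defg-}, using the identity $H=(X+Y)^2-X^2$. For the upper bound, \eqref{defg+} gives $g^+(z^2)=1$ for $0\le z\le X$, $g^+(z^2)\ge 0$ everywhere, and $g^+(z^2)=0$ only once $z\ge X+Y$; hence $g^+(z^2)$ dominates $\mathbf{1}_{[0,X)}(z)$ at every point, including $z=X$, where the indicator vanishes but $g^+$ still equals $1$. For the lower bound, \eqref{defg-} gives $g^-(z^2)=0$ for $z\ge X$ and $g^-(z^2)\le 1$ for $z<X$, so $g^-(z^2)$ never exceeds $\mathbf{1}_{[0,X)}(z)$. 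Here one should record the mild constraint $X^2\ge H+1$, needed so that $T=\sqrt{X^2-H-1}$ is real; this is automatic for the eventual choice of $Y$ small relative to $X$.

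With this inequality established, the proposition follows by applying it term by term to every $\gamma\in H_1\backslash\Gamma-H_1/H_1$ with $B(\gamma)>1$ (for which $B(\gamma)\ge 0$, so the bounds are valid) and summing. Adding the common term $\sum_{B(\gamma)\le 1}1$ to each of the three resulting expressions yields exactly \eqref{Nineq}. No convergence question arises, since $g^\pm$ are supported in $z\le X+Y$, so by discreteness only finitely many regular double cosets contribute to each sum.

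I do not expect a substantial obstacle: the statement merely sandwiches a sharp lattice count between two smoothed counts. The only delicate points are bookkeeping ones---correctly translating the transition intervals of $g^\pm$ from the variable $y=z^2$ back to $z=B(\gamma)$, and making sure the identity and the finitely many exceptional double cosets (all with $B\le 1$) are absorbed into the common counting term rather than compared against $g^\pm$ directly.
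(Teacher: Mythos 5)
Your proposal is correct and follows essentially the same route as the paper: write $N(X,l)$ as the sum over cosets with $B(\gamma)\le 1$ plus the sharp count over $1<B(\gamma)<X$, and sandwich the latter using the pointwise facts that $g^{+}(y)=1$ on $[0,X^2]$ with $g^{+}\ge 0$ everywhere, while $0\le g^{-}(y)\le 1$ on $[0,X^2]$ and $g^{-}(y)=0$ for $y\ge X^2$. Your extra remarks (the reality of $T$, i.e. $X^2\ge H+1$, and the finiteness of the sums by discreteness) are harmless refinements the paper leaves implicit.
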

\begin{proof}

We note that $f^{+}(1)=f^{-}(1)=0$.
From \eqref{defg+} and \eqref{defg-}, since $0\le g^{-}(y)\le 1$ in $\big[0,X^2\big]$ and $g^{-}(y)=0$ for $y>X^2$, we clearly have
\begin{equation}\label{g1g}
    \sum_{\substack{{\gamma \in H_1 \backslash \Gamma -H_1 /H_1} \\ B(\gamma)>1 }}g^{-}\big(B(\gamma)^2\big) \le \sum_{\substack{{\gamma \in H_1 \backslash \Gamma -H_1 /H_1} \\1<B(\gamma)<X }}1 \le \sum_{\substack{{\gamma \in H_1 \backslash \Gamma -H_1 /H_1} \\ B(\gamma)>1 }}g^{+}\big(B(\gamma)^2\big).
\end{equation}
By its definition, $N(X,l)$ can be written as 
$$N(X,l)=\sum_{\substack{{\gamma \in H_1 \backslash \Gamma/H_1} \\ B(\gamma)\leq 1 }}1+\sum_{\substack{{\gamma \in H_1 \backslash \Gamma -H_1 /H_1} \\ 1<B(\gamma)<X }}1,$$
so the result follows from \eqref{g1g}. 
\end{proof}

\section{The spectral side of the relative trace formula}\label{The spectral side of the relative trace formula}
We proceed with the analysis of the spectral side of Theorem \ref{traceformula} for $f^{+}$. The computations regarding $f^{-}$ follow in the same way.
The integral transform that appears in the Huber transform in \eqref{d(f)} is
$$\int_0^{\infty}f^{+}(x^2+1)(P_{s-1}(ix)+P_{s-1}(-ix))dx,$$ and, for the specific test function $f^{+}$ that we chose in \eqref{f+}, it becomes
\begin{align}
    & \int_0^{a}\frac{2}{\pi H}x \Big(\sqrt{A^2-x^2}-\sqrt{a^2-x^2}\Big)(P_{s-1}(ix)+P_{s-1}(-ix))\: dx \\
    & + \int_{a}^{A}\frac{2}{\pi H}x\sqrt{A^2-x^2}(P_{s-1}(ix)+P_{s-1}(-ix)) dx.
\end{align}
For $y>0$, let 
\begin{equation}\label{Jsy}
    J_s(y)=\int_0^{\sqrt{y}}x\sqrt{y-x^2}\big(P_{s-1}(ix)+P_{s-1}(-ix)\big)dx.
\end{equation}
The Huber transform of $f^{+}$ can be written as
\begin{equation}\label{Jdiff}
d_t(f^{+})=\pi^{-3/2} \Gamma\Big(\frac{s+1}{2}\Big)\Gamma\Big(1-\frac{s}{2}\Big)\frac{J_s\big(A^2\big)-J_s\big(a^2\big)}{H}.
\end{equation}
We prove the following Lemma for $d_t(f^{+})$:
\begin{lem}\label{mainlem}
For $s=\frac{1}{2}+it$ with $t\notin \mathbb{R}$ and $s\neq 1$, i.e. $1/2<s<1$, we have:
\begin{align}\label{dt3}
    \begin{split}
        d_t(f^{+})&=\frac{\gamma_1(s)}{\pi}\frac{2}{s+2}\frac{A^{s+2}-a^{s+2}}{A^2-a^2}(1+O(X^{-2})) \\
        &+\frac{\gamma_2(s)}{\pi}\frac{A^{3-s}-a^{3-s}}{A^2-a^2}(1+O(X^{-2}))\\
        &+\frac{\gamma_3(s)}{\pi}\frac{A-a}{A^2-a^2}(1+O(X^{-2})),
    \end{split}
\end{align}
where
\begin{align}\label{gamma123}
    \begin{split}
        &\gamma_1(s)=\frac{\pi}{2}\frac{\Gamma(s+1/2)\Gamma((s+1)/2)}{\Gamma(s/2)^3}\frac{4}{s(2s-1)}, \quad \gamma_2(s)=\frac{\pi}{2}\frac{\Gamma((1-2s)/2)\Gamma((2-s)/2)}{\big(\Gamma((1-s)/2)\big)^2\Gamma((5-s)/2)}, \\
        &\text{and} \quad \gamma_3(s)=-\frac{1}{2}\frac{\Gamma((-1-s)/2)\Gamma((s-2)/2)}{\Gamma((1-s)/2)\Gamma(s/2)}.
    \end{split}
\end{align}

\end{lem}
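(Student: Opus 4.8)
The plan is to reduce the lemma to an asymptotic evaluation, as $y\to\infty$, of the function $J_s(y)$ from \eqref{Jsy}. Indeed, in \eqref{Jdiff} we have $H=A^2-a^2$ and both $a^2=X^2-1$ and $A^2=(X+Y)^2-1$ grow like $X^2$, so everything comes down to understanding $J_s(y)=\int_0^{\sqrt y}x\sqrt{y-x^2}\,\Phi(x)\,dx$, where I write $\Phi(x):=P_{s-1}(ix)+P_{s-1}(-ix)$. Once $J_s(y)$ is shown to be of the shape $c_1 y^{(s+2)/2}+c_2 y^{(3-s)/2}+c_3 y^{1/2}$ with relative errors $O(y^{-1})$, substituting $y=A^2$ and $y=a^2$ and dividing the difference by $H=A^2-a^2$ produces exactly the three quotients $\tfrac{A^{s+2}-a^{s+2}}{A^2-a^2}$, $\tfrac{A^{3-s}-a^{3-s}}{A^2-a^2}$ and $\tfrac{A-a}{A^2-a^2}$ appearing in \eqref{dt3}; the prefactor $\pi^{-3/2}\Gamma(\tfrac{s+1}{2})\Gamma(1-\tfrac s2)$ is then folded into the constants $\gamma_i(s)$.

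The key analytic input is the standard connection formula for Legendre functions of large argument (see \cite{Ederlyi}), which expresses $P_{s-1}(z)$ as an explicit Gamma-coefficient combination of $z^{s-1}\,{}_2F_1\!\bigl(\tfrac{1-s}{2},\tfrac{2-s}{2};\tfrac32-s;z^{-2}\bigr)$ and $z^{-s}\,{}_2F_1\!\bigl(\tfrac s2,\tfrac{s+1}{2};s+\tfrac12;z^{-2}\bigr)$, valid for $z$ off $[-1,1]$. I would apply this with $z=\pm ix$, use $(\pm ix)^{s-1}=x^{s-1}e^{\pm i\pi(s-1)/2}$ and $(\pm ix)^{-s}=x^{-s}e^{\mp i\pi s/2}$, and add the two. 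Since the hypergeometric argument is $z^{-2}=-x^{-2}$, each ${}_2F_1$ equals $1+O(x^{-2})$, so for $x>1$,
$$\Phi(x)=\beta_1(s)\,x^{s-1}\bigl(1+O(x^{-2})\bigr)+\beta_2(s)\,x^{-s}\bigl(1+O(x^{-2})\bigr),$$
with $\beta_1(s),\beta_2(s)$ explicit and built from $\cos(\pi(s-1)/2)$, $\cos(\pi s/2)$ and the connection Gamma factors.

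Next I would split off the pure power-law part $\Psi(x):=\beta_1 x^{s-1}+\beta_2 x^{-s}$ and write
$$J_s(y)=\int_0^{\sqrt y}x\sqrt{y-x^2}\,\Psi(x)\,dx+\int_0^{\sqrt y}x\sqrt{y-x^2}\,\bigl(\Phi(x)-\Psi(x)\bigr)\,dx.$$
The first integral is evaluated exactly by the substitution $x^2=yu$ as a Beta integral, giving $\tfrac12\beta_1 B(\tfrac{s+1}{2},\tfrac32)\,y^{(s+2)/2}+\tfrac12\beta_2 B(\tfrac{2-s}{2},\tfrac32)\,y^{(3-s)/2}$; these are the sources of the $\gamma_1$ and $\gamma_2$ terms, and the factor $\tfrac{2}{s+2}$ in front of the first term is precisely what the value $\Gamma(\tfrac{s+1}{2})\Gamma(\tfrac32)/\Gamma(\tfrac s2+2)$ produces after writing $\Gamma(\tfrac s2+2)=\tfrac{s+2}{2}\Gamma(\tfrac s2+1)$. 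For the second integral I would factor $\sqrt{y-x^2}=\sqrt y\,(1-x^2/y)^{1/2}$; since $0<s<1$ the weight $x(\Phi-\Psi)=O(x^{s-2}+x^{-s-1})$ is integrable on $(0,\infty)$, so dominated convergence gives $\sqrt y\,\bigl(C+O(y^{-1})\bigr)$ with $C=\int_0^\infty x\bigl(\Phi(x)-\Psi(x)\bigr)\,dx$, the $\gamma_3$ term $\propto y^{1/2}$. The three relative errors — from the hypergeometric tails, from the $(1-x^2/y)^{1/2}$ expansion, and from the convergence rate — are all $O(y^{-1})=O(X^{-2})$, matching the three factors in \eqref{dt3}.

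The main obstacle is the closed-form evaluation of the remainder constant $C$. Individually $\int_0^\infty x\Phi$ and $\int_0^\infty x\Psi$ diverge, so I would compute $C$ as the meromorphically continued Mellin transform $M(w)=\int_0^\infty x^{w-1}\Phi(x)\,dx$ of the Legendre combination: this has a closed form as a ratio of Gamma functions, its only poles near $w=2$ are the simple ones at $w=1-s,\,s,\dots$ coming from the asymptotics that define $\Psi$, and for $0<s<1$ the point $w=2$ is regular, so $C=M(2)$ after the two polar pieces of $\Psi$ are subtracted. This is exactly the computation that outputs $\gamma_3(s)$ in \eqref{gamma123}. Finally I would assemble the three pieces, fold in the prefactor of \eqref{Jdiff} together with $\beta_1,\beta_2$, and verify that the resulting constants collapse to the stated $\gamma_1(s),\gamma_2(s),\gamma_3(s)$; the argument for $f^-$ is identical with $(A,a)$ replaced by $(a,T)$.
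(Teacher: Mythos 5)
Your route is genuinely different from the paper's. The paper first uses the quadratic transformations \cite[9.136.2--3]{gradshteyn2007} to collapse $P_{s-1}(ix)+P_{s-1}(-ix)$ into a single ${}_2F_1\big(\tfrac{1-s}{2},\tfrac s2,\tfrac12;-x^2\big)$, then evaluates $J_s(A^2)$ \emph{exactly} as $A^3D(s)\tfrac{\Gamma(3/2)}{\Gamma(5/2)}\,{}_3F_2(\cdots;-A^2)$ via \cite[7.512.12]{gradshteyn2007}, and only then applies the large-argument connection formula \cite[16.8.8]{NIST} to that ${}_3F_2$, so that all three coefficients $\gamma_1,\gamma_2,\gamma_3$ and the three $(1+O(A^{-2}))$ factors drop out of a single identity, as in \eqref{JF3}. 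You instead expand the Legendre functions themselves at large argument, peel off the power-law part $\Psi$, obtain the $\gamma_1,\gamma_2$ terms from exact Beta integrals, and recover the $y^{1/2}$ coefficient as a regularized integral $C=\int_0^\infty x(\Phi-\Psi)\,dx$ via Mellin continuation. Both are viable; the paper's version has the advantages that the exact ${}_3F_2$ representation is reused later (it is the source of the functions $G_s$ in the proof of Proposition \ref{mainprop}(ii)--(iii)) and that $\gamma_3$ comes for free, whereas your scheme needs a separate regularized computation for it. Since the exponents $\tfrac{s+2}{2},\tfrac{3-s}{2},\tfrac12$ are distinct for $s\ne\tfrac12$, uniqueness of the asymptotic expansion forces your constants to coincide with \eqref{gamma123}; but the lemma asks for the constants explicitly, and you leave precisely the hardest one, $\gamma_3$, uncomputed, so that step cannot be waved through.

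One step as written is incorrect, though repairable. Dominated convergence does not give $\sqrt y\,(C+O(y^{-1}))$ for the remainder integral: the truncation $\int_{\sqrt y}^\infty x(\Phi-\Psi)\,dx$ and the $\sqrt{y-x^2}-\sqrt y$ correction are each of size $y^{(s-1)/2}+y^{-s/2}$, so the remainder integral equals $C\sqrt y+O\big(y^{s/2}\big)+O\big(y^{(1-s)/2}\big)$, and for $s$ close to $1$ the error term $y^{s/2}$ is almost as large as the main term $C\sqrt y$. The lemma nevertheless survives because $y^{s/2}$ and $y^{(1-s)/2}$ are relative $O(y^{-1})=O(X^{-2})$ corrections to the \emph{first two} terms $y^{(s+2)/2}$ and $y^{(3-s)/2}$ of \eqref{dt3}, not to the third; you need to route these errors there explicitly (e.g.\ by keeping one more term of the large-$x$ expansion of $\Phi$), otherwise the claimed $(1+O(X^{-2}))$ on the $\gamma_3$ term is unsupported. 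With that bookkeeping fixed and the Mellin evaluation of $C$ actually carried out (including the branch factors $e^{\pm i\pi(s-1)/2}$, $e^{\mp i\pi s/2}$ in $\beta_1,\beta_2$), your argument closes.
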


\begin{proof}
By the definition of the Legendre function $P_{s-1}$, see \cite[8.704]{gradshteyn2007}, we have
\begin{align}
    \begin{split}
        J_s\big(A^2\big)=&\int_0^Ax\sqrt{A^2-x^2}\big(P_{s-1}(ix)+P_{s-1}(-ix)\big)dx \\
        =&\int_0^Ax\sqrt{A^2-x^2}\Big({}_2F_1\Big(1-s,s,1;\frac{1-ix}{2}\Big)+{}_2F_1(1-s,s;1;\frac{1+ix}{2}\Big)\Big)dx.
    \end{split}
\end{align}
Applying formulas \cite[9.136.2 and 9.136.3]{gradshteyn2007} for $\alpha=(1-s)/2,\beta=s/2,z=-x^2$ to both hypergeometric functions inside the integral, we get 
\begin{equation}\label{Js}
    J_s\big(A^2\big)=2D(s)\int_0^Ax\sqrt{A^2-x^2}\;  {}_2F_1\Big(\frac{1-s}{2},\frac{s}{2},\frac{1}{2};-x^2\Big)dx,
\end{equation}
where $$D(s)=\displaystyle \frac{\sqrt{\pi}}{\Gamma\big(\frac{2-s}{2}\big)\Gamma\big(\frac{s+1}{2}\big)}.$$
Let $x^2=u$ and then $u=vA^2$ to get
$$J_s(A^2)=A^3D(s)\int_0^1\sqrt{1-v}\;{}_2F_1\Big(\frac{1-s}{2},\frac{s}{2},\frac{1}{2};-vA^2\Big)dv.$$
By \cite[Eq. 7.512.12]{gradshteyn2007} we have 
$$J_s(A^2)=A^3D(s)\frac{\Gamma(3/2)}{\Gamma(5/2)}\:_3F_2\Big(\frac{1-s}{2},\frac{s}{2},1;\frac{1}{2},\frac{5}{2};-A^2\Big) \; .$$
By \cite[Eq. 16.8.8]{NIST} for $q=2, z=-A^2$, and since $$_3F_2\big(a,b,c,d,a;z\big)=\:_2F_1\big(b,c,d;z\big),$$ we get for $s\neq 1/2$ and $s\neq1$
\begin{align}\label{JF3}
    \begin{split}
            J_s(A^2)=&A^3D(s)\frac{\Gamma(3/2)\Gamma(1/2)}{\Gamma((1-s)/2)\Gamma(s/2)}\Bigg(\frac{\Gamma((1-s)/2)\Gamma((2s-1)/2)\Gamma((1+s)/2)}{\Gamma(s/2)\Gamma((4+s)/2)}A^{s-1}\times \\
        &\times \:_2F_1\Big(1-\frac{s}{2},-1-\frac{s}{2},\frac{3-2s}{2},-A^{-2}\Big) \\
        &+\frac{\Gamma(s/2)\Gamma((1-2s)/2)\Gamma(1-s/2)}{\Gamma((1-s)/2)\Gamma((5-s)/2)}A^{-s}\:_2F_1\Big(\frac{s+1}{2},\frac{s-3}{2},\frac{2s+1}{2};-A^{-2}\Big)\\
        &+\frac{\Gamma((-1-s)/2)\Gamma((s-2)/2)}{\Gamma(-1/2)\Gamma(3/2)}A^{-2}\:_3F_2\Big(1,\frac{3}{2},-\frac{1}{2};\frac{s+3}{2},\frac{4-s}{2};-A^{-2}\Big)\Bigg). \\
    \end{split}
\end{align}
We compute the Gamma factors in \eqref{JF3} for each summand: they are $D(s)$ times $\gamma_1(s)(2/(s+2))$, $\gamma_2(s)$, and, $\gamma_3(s)$ respectively, see \eqref{gamma123}.

If $1/2<s<1$, we use the series expansion of the hypergeometric functions in \eqref{Js} to compute that
$$_2F_1\Big(1-\frac{s}{2},-1-\frac{s}{2},\frac{3-2s}{2},-A^{-2}\Big)=1+O\big(A^{-2}\big) \; ,$$
$$_2F_1\Big(\frac{s+1}{2},\frac{s-3}{2},\frac{2s+1}{2};-A^{-2}\Big)=1+O\big(A^{-2}\big) \; ,$$ $$_3F_2\Big(1,\frac{3}{2},-\frac{1}{2};\frac{s+3}{2},\frac{4-s}{2};-A^{-2}\Big)=1+O\big(A^{-2}\big).$$
We use the same methods to compute $J_s(a^2)$.
Since $a,A \sim X$ we have
\begin{align}
    \begin{split}
        d_t(f^{+})&= \frac{1}{2\sqrt{\pi}}\Gamma\Big(\frac{s+1}{2}\Big)\Gamma\Big(1-\frac{s}{2}\Big)\frac{2}{\pi H}\big(J_s\big(A^2\big)-J_s\big(a^2\big)\big) \\
        &=\frac{\gamma_1(s)}{\pi}\frac{2}{s+2}\frac{A^{s+2}-a^{s+2}}{A^2-a^2}\big(1+O\big(X^{-2}\big)\big) \\
        &\quad +\frac{\gamma_2(s)}{\pi}\frac{A^{3-s}-a^{3-s}}{A^2-a^2}\big(1+O\big(X^{-2}\big)\big)\\
        &\quad +\frac{\gamma_3(s)}{\pi}\frac{A-a}{A^2-a^2}\big(1+O\big(X^{-2}\big)\big).
    \end{split}
\end{align}
\end{proof}
We note that, using Stirling's approximation \eqref{Stir}, the gamma functions appearing can be bounded as follows:

\begin{equation}\label{Gammas}
    \gamma_1(s) \ll |t|^{-1/2}\: ,\quad \gamma_2(s) \ll |t|^{-3/2} \quad \text{and} \quad \gamma_3(s) \ll |t|^{-2}.
\end{equation}
More generally we can prove the following result about $d_t(f^{+})$:
\begin{lem}\label{lem19}
Let $f^{+}$ be the function defined in \eqref{f+}, then there exists $\xi \in [a^2,A^2]$ such that
$$d_t(f^{+})= \frac{\Gamma \big(\frac{s+1}{2}\big)\Gamma \big(1-\frac{s}{2}\big)}{\pi^{3/2}}\int_0^{\sqrt{\xi}}\frac{x}{\sqrt{\xi-x^2}}\big(P_{s-1}(ix)+P_{s-1}(-ix)\big)dx.$$
\end{lem}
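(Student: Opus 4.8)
The plan is to recognize the bracketed quotient in $d_t(f^{+})$ as a divided difference and to collapse it with the mean value theorem. Writing $p_s(x):=P_{s-1}(ix)+P_{s-1}(-ix)$ and using that $H=A^2-a^2$, formula \eqref{Jdiff} reads
$$ d_t(f^{+})=\pi^{-3/2}\,\Gamma\!\big(\tfrac{s+1}{2}\big)\Gamma\!\big(1-\tfrac{s}{2}\big)\,\frac{J_s(A^2)-J_s(a^2)}{A^2-a^2}, $$
so the task reduces to computing $J_s'$ and invoking the mean value theorem on $[a^2,A^2]$.

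First I would show that $J_s$ is smooth on $(0,\infty)$ and differentiate it. The substitution $x=\sqrt{y}\sin\theta$ in \eqref{Jsy} yields the fixed-endpoint form $J_s(y)=y^{3/2}\int_0^{\pi/2}\sin\theta\cos^2\theta\,p_s(\sqrt{y}\sin\theta)\,d\theta$, which is plainly a smooth function of $y$. Differentiating \eqref{Jsy} directly by Leibniz's rule — the boundary term at $x=\sqrt{y}$ vanishes since $\sqrt{y-x^2}=0$ there — expresses $J_s'(y)$ as a constant multiple of $\int_0^{\sqrt{y}}\frac{x}{\sqrt{y-x^2}}\,p_s(x)\,dx$. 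The same substitution rewrites this kernel integral as $\sqrt{y}\int_0^{\pi/2}\sin\theta\,p_s(\sqrt{y}\sin\theta)\,d\theta$, which is continuous in $y$; in particular the integrable singularity of $x/\sqrt{y-x^2}$ at the upper limit is harmless.

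Since $a^2=X^2-1>0$, the interval $[a^2,A^2]$ lies in $(0,\infty)$, and the mean value theorem produces $\xi\in(a^2,A^2)$ with $\frac{J_s(A^2)-J_s(a^2)}{A^2-a^2}=J_s'(\xi)$. Inserting this into the displayed expression for $d_t(f^{+})$ and collecting the constants produces the asserted representation with the integral running up to $\sqrt{\xi}$. Equivalently, one can bypass $J_s$ by writing $\sqrt{(A^2-x^2)_+}-\sqrt{(a^2-x^2)_+}=\int_{a^2}^{A^2}\tfrac12(y-x^2)^{-1/2}\mathbf{1}_{\{y>x^2\}}\,dy$, substituting into \eqref{d(f)} for $f^{+}$, exchanging the order of integration by Fubini, and applying the first mean value theorem for integrals to $\int_{a^2}^{A^2}\Psi(y)\,dy=H\,\Psi(\xi)$ with $\Psi(y)=\int_0^{\sqrt{y}}\frac{x}{\sqrt{y-x^2}}p_s(x)\,dx$; the factor $H$ cancels and the same formula emerges.

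The delicate point is the mean value theorem itself, which requires $J_s'$ (equivalently $\Psi$) to be real-valued. This holds in the small-eigenvalue range $1/2<s<1$ of Lemma \ref{mainlem}, where $p_s(x)=2\,\Re P_{s-1}(ix)$ is real; on the critical line $p_s$ is complex, and there the identity must be read as the bound $|d_t(f^{+})|\ll \big|\Gamma(\tfrac{s+1}{2})\Gamma(1-\tfrac{s}{2})\big|\max_{\xi\in[a^2,A^2]}|\Psi(\xi)|$, obtained from $\int_{a^2}^{A^2}|\Psi|\le H\max|\Psi|$, which is all the subsequent mean-square estimates need. The remaining ingredients — Fubini and differentiation under the integral sign — are routine once the substitution $x=\sqrt{y}\sin\theta$ has removed the endpoint singularity.
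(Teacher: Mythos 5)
Your argument is the same as the paper's: write $d_t(f^+)$ as a divided difference of the fixed\hyphenation{end}-endpoint integral over $[a^2,A^2]$, differentiate under the integral sign (the boundary term vanishes because $\sqrt{y-x^2}=0$ at $x=\sqrt{y}$), and apply the mean value theorem. The paper does exactly this, merely working with $R_s(y)=\int_0^{\sqrt{y}}x\sqrt{y-x^2}\,{}_2F_1\big(\tfrac{1-s}{2},\tfrac{s}{2},\tfrac12;-x^2\big)\,dx$ instead of your $J_s$; the two differ only by the constant $2D(s)$.

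The one substantive error is your claim that $p_s(x)=P_{s-1}(ix)+P_{s-1}(-ix)$ is complex on the critical line, forcing you to downgrade the identity to an upper bound there. In fact $p_s$ is real for $s=\tfrac12+it$ with $t\in\R$ as well: since $\bar{s}=1-s$ on the critical line, $\overline{P_{s-1}(ix)}={}_2F_1\big(s,1-s,1;\tfrac{1+ix}{2}\big)=P_{s-1}(-ix)$, so $p_s(x)=2\Re P_{s-1}(ix)$ in this range too. Equivalently, in the form the paper uses, ${}_2F_1\big(\tfrac{1-s}{2},\tfrac{s}{2},\tfrac12;-x^2\big)$ has complex-conjugate upper parameters, a real lower parameter and a real argument, so its series has nonnegative real terms, and $D(s)=\sqrt{\pi}/\big(\Gamma(\tfrac{2-s}{2})\Gamma(\tfrac{s+1}{2})\big)=\sqrt{\pi}/|\Gamma(\tfrac34+\tfrac{it}{2})|^2>0$. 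This is precisely the observation the paper makes to justify the mean value theorem for all spectral parameters, so the exact representation holds on the critical line and no weakening is needed; note also that the lemma is applied at $t=0$, which your "bound only" reading would still cover but the statement would not be proved as written. A minor bookkeeping point: your Leibniz computation gives $J_s'(\xi)=\tfrac12\int_0^{\sqrt{\xi}}\tfrac{x}{\sqrt{\xi-x^2}}\,p_s(x)\,dx$, so strictly you obtain the stated formula with an extra factor $\tfrac12$; this constant is harmless for the subsequent estimates, and the paper's own normalisations here are not entirely consistent either.
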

\begin{proof}

We define $$R_s(y)=\int_0^{\sqrt{y}} G_s(y,x) dx,$$
where  
\begin{equation}\label{Gs}
    G_s(y,x)=x\sqrt{y-x^2}\:_{2}F_1\Big(\frac{1-s}{2},\frac{s}{2},\frac{1}{2};-x\Big).
\end{equation}
By \eqref{Jdiff} and \eqref{Js} we have 
\begin{equation}\label{Rdif}
    d_t(f)=\frac{2}{\sqrt{\pi}}\frac{R_s(A^2)-R_{s}(a^2)}{A^2-a^2}.
\end{equation}
Since $H=A^2-a^2$, we apply the Mean Value Theorem for $R_s$ in \eqref{Rdif} on the interval $[a^2,A^2]$. Note that for $s \in [1/2,1]$ or $s=1/2+it$, $t \in \mathbb{R}$, the hypergeometric function in \eqref{Gs} is real, hence the Mean Value Theorem can be applied. By the Leibniz Rule, we have
\begin{align}\notag
    \begin{split}
        R'_s(y)&= G_s(y,\sqrt{y})(\sqrt{y})'-G_s(y,0)0'+\int_0^{\sqrt{y}}\frac{\partial}{\partial y}G_s(y,x)dx \\
        &= \frac{1}{2}\int_0^{\sqrt{y}}\frac{x}{\sqrt{y-x^2}}\:_{2}F_1\Big(\frac{1-s}{2},\frac{s}{2},\frac{1}{2};-x^2\Big)dx.
   \end{split}
\end{align}
Hence, there is $\xi\in [a^2,A^2] \implies X^2-1<\xi<(X+Y)^2-1$, such that 
\begin{align}\notag
    \begin{split}
        d_{t}(f^{+})&=\frac{2}{\sqrt{\pi}}R'_s(\xi)=\frac{1}{\sqrt{\pi}}\int_0^{\sqrt{\xi}}\frac{x}{\sqrt{\xi-x^2}}\:_{2}F_1\Big(\frac{1-s}{2},\frac{s}{2},\frac{1}{2};-x^2\Big)dx\\
       &= \frac{\Gamma \big(\frac{s+1}{2}\big)\Gamma \big(1-\frac{s}{2}\big)}{\pi^{3/2}}\int_0^{\sqrt{\xi}}\frac{x}{\sqrt{\xi-x^2}}\big(P_{s-1}(ix)+P_{s-1}(-ix)\big)dx.
    \end{split}
\end{align}
\end{proof}
Using Lemmata \ref{mainlem} and \ref{lem19} we prove the main estimate for the Huber transform of $f^{+}$ as follows: \begin{proposition}\label{mainprop}
\begin{enumerate}
   \item[(i)] For $s=\frac{1}{2}+it$ with $t\notin \mathbb{R}$, i.e. $1/2<s\le1$, we have 

\begin{align}
    \begin{split}
        d_t(f^{+})&=\frac{\gamma_1\big(\frac{1}{2}+it\big)}{\pi}X^{\frac{1}{2}+it}+\frac{\gamma_2\big(\frac{1}{2}+it\big)}{\pi}\Big(\frac{5}{2}-it\Big)X^{\frac{1}{2}-it} \\
        &\quad+O\Big(X^{s-1}Y+X^{-s}Y+Y\big),
    \end{split}
\end{align}
    
    with $\gamma_1(s),\gamma_2(s)$ given in \eqref{gamma123}.
    \item[(ii)] Let $t\in \mathbb{R}$ and $t\neq 0$, then the Huber transform of $f^{+}$ satisfies the bound
    $$d_t(f^{+})=X^{1/2}O\big(\min \{|t|^{-1/2},|t|^{-3/2}X/Y\}\big).$$
    \item[(iii)] Let $t\in \mathbb{R}$ with $0\neq t \le X^2/Y^2$. Then $d_t(f^{+})$ can be written in the form
    $$ d_t(f^{+})= a(t,X/Y)X^{\frac{1}{2}+it}+b(t,X/Y)X^{\frac{1}{2}-it}+O\big(|t|^{-3/2}\big),$$ where $$a(t,X/Y),b(t,X/Y)=O\big(\min\{|t|^{-1/2},|t|^{-3/2}X/Y\}\big).$$
   
    \item[(iv)] For $t=0$, we have $$d_0(f^{+}) \ll X^{1/2}\log X.$$
\end{enumerate}
\end{proposition}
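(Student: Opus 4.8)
The plan is to obtain all four regimes from the two representations already in hand: the explicit three‑term expansion of Lemma~\ref{mainlem} together with the Gamma bounds \eqref{Gammas}, and the mean value integral of Lemma~\ref{lem19}. Throughout I use that $A^2-a^2=H\asymp XY$, that $a=X(1+O(X^{-2}))$, $A=(X+Y)(1+O(X^{-2}))$, $A-a\asymp Y$, and that every ratio occurring in Lemma~\ref{mainlem} is of the form $(A^{\alpha}-a^{\alpha})/(A^2-a^2)$ with $\alpha\in\{s+2,\,3-s,\,1\}$.

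For (i), with $s\in(1/2,1]$ real, I expand each ratio in Lemma~\ref{mainlem}. The mean value theorem applied to $r\mapsto r^{\alpha/2}$ on $[a^2,A^2]$ gives $(A^{\alpha}-a^{\alpha})/(A^2-a^2)=\tfrac{\alpha}{2}\eta^{(\alpha-2)/2}$ for some $\eta\in[a^2,A^2]$, and since $\eta=X^2(1+O(Y/X))$ we get $\eta^{(\alpha-2)/2}=X^{\alpha-2}(1+O(Y/X))$. Taking $\alpha=s+2$ produces the leading term $\tfrac{\gamma_1(s)}{\pi}X^{s}=\tfrac{\gamma_1(s)}{\pi}X^{1/2+it}$ (the prefactor $\tfrac{2}{s+2}$ cancelling $\tfrac{\alpha}{2}$) with error $O(X^{s-1}Y)$; taking $\alpha=3-s$ produces a constant multiple of $X^{1-s}$, with coefficient $\tfrac{3-s}{2}$ read off from the mean value theorem, i.e.\ the second ($X^{1/2-it}$) main term, with error $O(X^{-s}Y)$; and the $\gamma_3$-ratio equals $\tfrac{\gamma_3(s)}{\pi}(A+a)^{-1}=O(X^{-1})=O(Y)$. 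Summing gives the stated formula and remainder $O(X^{s-1}Y+X^{-s}Y+Y)$.

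For (ii) and (iii), $t\in\mathbb{R}\setminus\{0\}$, I keep the same decomposition, now justified by analytic continuation of \eqref{JF3} (the hypergeometric tails stay $1+O(|t|X^{-2})$, hence $O(1)$ in the relevant range). In (ii) I bound each difference two ways: by the length estimate $|A^{\alpha}-a^{\alpha}|=|\alpha|\,\big|\int_a^A r^{\alpha-1}\,dr\big|\le|\alpha|\int_a^A r^{\operatorname{Re}\alpha-1}\,dr\asymp|\alpha|X^{\operatorname{Re}\alpha-1}Y$, and by the trivial estimate $|A^{\alpha}-a^{\alpha}|\le A^{\operatorname{Re}\alpha}+a^{\operatorname{Re}\alpha}\asymp X^{\operatorname{Re}\alpha}$. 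Dividing by $H\asymp XY$ and inserting $\gamma_1\ll|t|^{-1/2}$, $\gamma_2\ll|t|^{-3/2}$ (with the $\gamma_3$-term $\ll|t|^{-2}X^{-1}$), the first gives $\ll X^{1/2}|t|^{-1/2}$ and the second $\ll X^{1/2}|t|^{-3/2}X/Y$; the minimum is (ii), with crossover at $|t|\asymp X/Y$. For (iii) I expose the oscillation by replacing $A,a$ with $X+Y,X$ and factoring the power of $X$, e.g.
\[
\frac{2}{s+2}\frac{A^{s+2}-a^{s+2}}{A^2-a^2}=X^{1/2+it}\,\frac{2}{s+2}\frac{(1+Y/X)^{s+2}-1}{(1+Y/X)^2-1}+(\text{error}),
\]
and analogously for the $\gamma_2$-term with $X^{1/2-it}$; multiplying the resulting factors by $\gamma_1/\pi$ and $\gamma_2/\pi$ defines $a(t,X/Y)$ and $b(t,X/Y)$, which the two bounds above control by $O(\min\{|t|^{-1/2},|t|^{-3/2}X/Y\})$. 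The hard part is the error in (iii): the replacements $A\to X+Y$, $a\to X$ and the hypergeometric tails leave corrections of the form (a power of $X$)$\cdot X^{1/2\pm it}$ whose coefficients are not functions of $X/Y$ alone and hence must go into the remainder; a direct estimate gives these $\ll|t|^{-1/2}X^{-1/2}/Y$ (and $\ll|t|^{-2}X^{-1}$ from $\gamma_3$), and it is precisely the hypothesis $t\le X^2/Y^2$ that pushes them below $|t|^{-3/2}$.

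For (iv) the decomposition degenerates, since $\gamma_1(s)$ has a simple pole at $s=\tfrac12$ (from $1/(2s-1)$) and $\gamma_2(s)$ a simple pole (from $\Gamma((1-2s)/2)$). I expect the residues to cancel, so that near $s=\tfrac12$ the two main terms combine as $\tfrac{C}{2s-1}(X^{s}-X^{1-s})$; since $X^{s}-X^{1-s}=2X^{1/2}\sinh\!\big((s-\tfrac12)\log X\big)\sim 2(s-\tfrac12)X^{1/2}\log X$, the pole cancels and the limit is $O(X^{1/2}\log X)$. Equivalently and more cleanly, I would evaluate $d_0(f^{+})$ directly from Lemma~\ref{lem19} at $s=\tfrac12$: the confluent Legendre asymptotic $P_{-1/2}(ix)+P_{-1/2}(-ix)\ll x^{-1/2}\log x$ and the substitution $x=\sqrt{\xi}\sin\theta$ turn the integral into $\sqrt{\xi}\int_0^{\pi/2}\sqrt{\sin\theta}\,\big(\log\sqrt{\xi}+\log\sin\theta\big)\,d\theta\ll X^{1/2}\log X$, which is the claim.
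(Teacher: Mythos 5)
Your treatment of (i), (iii) and (iv) follows essentially the same route as the paper (mean value theorem on the three ratios of Lemma \ref{mainlem} for (i); replacing $A,a$ by $X+Y,X$ and factoring out $X^{\pm it}$ to define $a(t,X/Y),b(t,X/Y)$ for (iii); the bound $P_{-1/2}(ix)+P_{-1/2}(-ix)\ll x^{-1/2}\log x$ fed into Lemma \ref{lem19} for (iv) — note only that your substitution should produce a prefactor $\xi^{1/4}\asymp X^{1/2}$, not $\sqrt{\xi}\asymp X$; the conclusion is still correct).

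The genuine gap is in (ii), precisely at the step you dispose of parenthetically: ``the hypergeometric tails stay $1+O(|t|X^{-2})$, hence $O(1)$ in the relevant range.'' Part (ii) is stated for \emph{all} real $t\neq 0$, and it is used that way in the proof of Theorem \ref{mainthm} (the dyadic sum over all $|t_j|\ge 1$) and in the second-moment argument (the range $A_3$). The function appearing in \eqref{JF3} is ${}_2F_1\big(1-\tfrac{s}{2},-1-\tfrac{s}{2},\tfrac{3-2s}{2};-A^{-2}\big)$, whose $k$-th Taylor coefficient behaves like $(c\,s)^k/k!$ because $s$ sits in \emph{both} numerator parameters but only once in the denominator parameter; the term-by-term absolute bound therefore gives $\exp(c|s|A^{-2})$, which is not $O(1)$ once $|t|\gg X^2$, and the first-order estimate $1+O(|t|X^{-2})$ is not a valid bound on the whole series in that regime. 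This is exactly the obstruction the paper flags (``we cannot use the hypergeometric series expansion for large $t$ \dots because $s$ appears in the first two entries'') and removes by applying the contiguous relation \eqref{9.137} and the quadratic transformations \eqref{9.133}, \eqref{9.131} to push the $s$-dependence entirely into the denominator parameter, after which the tail is $1+O(|w||s|^{-1})$ uniformly in $t$. Your two-sided estimate of $(A^{\alpha}-a^{\alpha})/H$ (length bound versus trivial bound) is the right mechanism and matches the paper's, but without some such transformation step the uniform control of the hypergeometric factors — and hence the bound of (ii) for $|t|\gtrsim X^2$ — is not established.
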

\begin{proof}

(i) Firstly, we estimate $\big(A^{s+2}-a^{s+2}\big)/(A^2-a^2)$. We consider the real and the imaginary part of this quantity and apply the Mean Value Theorem to each part. 
Since $s=1/2+it$ we have $$\Re\Big((x^2)^{(s+2)/2}\Big)=(x^2)^{5/4}\cos\Big(\frac{t}{2}\log(x^2)\Big),$$ and
$$\Im\Big((x^2)^{(s+2)/2}\Big)=(x^2)^{5/4}\sin\Big(\frac{t}{2}\log(x^2)\Big).$$
By the Mean Value Theorem, we can find  $\xi\in[a^2,A^2]$ such that
\begin{align}
    \begin{split}
        \frac{\Re\big(A^{s+2}\big)-\Re\big(a^{s+2}\big)}{A^2-a^2}&=\frac{5}{4}\xi^{1/4}\cos\Big(\frac{t}{2}\log \xi\Big)-\frac{t}{2}\xi^{1/4}\sin\Big(\frac{t}{2}\log \xi\Big)\\
       &=\frac{5}{4}\Re\big(\xi^{s/2}\big)-\frac{t}{2}\Im\big(\xi^{s/2}\big).
    \end{split}
\end{align}
Similarly, there exists $\zeta \in [a^2,A^2]$ such that
\begin{align}
    \begin{split}
       \frac{\Im\big(A^{s+2}\big)-\Im\big(a^{s+2}\big)}{A^2-a^2}&=-\frac{5}{4}\zeta^{1/4}\sin\Big(\frac{t}{2}\log \zeta\Big)+\frac{t}{2}\zeta^{1/4}\cos\Big(\frac{t}{2}\log \zeta\Big) \\
        &=-\frac{5}{4}\Re\big(\zeta^{s/2}\big)+\frac{t}{2}\Im\big(\zeta^{s/2}\big).
    \end{split}
\end{align}
\begin{align}
    \begin{split}
       \frac{\Im\big(A^{s+2}\big)-\Im\big(a^{s+2}\big)}{A^2-a^2}&=\frac{5}{4}\zeta^{1/4}\sin\Big(\frac{t}{2}\log \zeta\Big)+\frac{t}{2}\zeta^{1/4}\cos\Big(\frac{t}{2}\log \zeta\Big) \\
       &=\frac{5}{4}\Im\big(\zeta^{s/2}\big)+\frac{t}{2}\Re\big(\zeta^{s/2}\big).
   \end{split}
\end{align}
We notice that $\xi^{s/2},\zeta^{s/2} = X^s+O\big(|s|\big|X^{s-1}\big|Y\big)$. Since $1/2<s<1$ it follows that

$$\frac{A^{s+2}-a^{s+2}}{A^2-a^2}= \frac{5}{4}X^s+i\frac{t}{2}X^s+O\big(X^{s-1}Y\big).$$
We conclude that
\begin{equation}\label{s/2}
   \frac{A^{s+2}-a^{s+2}}{A^2-a^2}=\frac{s+2}{2}\Big(X^s+O\big(X^{s-1}Y\big)\Big).
\end{equation}
Similarly we have that 
\begin{equation}\label{1-s}
   \frac{A^{3-s}-a^{3-s}}{A^2-a^2}=\frac{3-s}{2}\Big(X^{1-s}+O\big(X^{-s}Y\big)\Big).
\end{equation}

For the last summand in \eqref{dt3} we use Stirling's approximation to conclude that it is $O(1)$.
Hence, using \eqref{s/2} and \eqref{1-s}, for $1/2<s<1$ we write \eqref{dt3} as:
\begin{align}
    \begin{split}
        d_t(f^{+})&= \frac{\gamma_1(s)}{\pi}X^s+\frac{\gamma_2(s)}{\pi}\frac{3-s}{2}X^{1-s}\\
        &\quad +O\Big(X^{s-1}Y+X^{-s}Y\Big).
    \end{split}
\end{align}
If $s=1$, i.e. $t=-i/2$, from \cite[Eq.8.711.1] {gradshteyn2007} we have that $P_0(\pm ix)=1$. We see that $d_{-i/2}(f)=(2/\pi)X+O(Y)$ and the result follows.

(ii) Suppose that $t\in \mathbb{R}, t\neq 0$ and let 
$$G_s(x)=x^{(s+2)/2}\: _2F_1\Big(1-\frac{s}{2},-1-\frac{s}{2},\frac{3-2s}{2},-x^{-1}\Big).$$
From \eqref{Jdiff} and \eqref{JF3} we have that 

\begin{align}
    \begin{split} \label{4sum}
         d_t(f^{+})&=\frac{\gamma_1(s)}{\pi}\frac{2}{s+2}\frac{G_s(A^2)-G_s(a^2)}{H} \\
         &+\frac{\gamma_2(s)}{\pi}\frac{G_{1-s}(A^2)-G_{1-s}(a^2)}{H}\\
         &+\frac{\gamma_3(s)}{\pi H}\Bigg(A\: _3F_2\Big(1,\frac{3}{2},-\frac{1}{2};\frac{s+3}{2},\frac{4-s}{2};-A^{-2}\Big)\\
        &-a\: _3F_2\Big(1,\frac{3}{2},-\frac{1}{2};\frac{s+3}{2},\frac{4-s}{2};-a^{-2}\Big)\Bigg).
    \end{split}
\end{align}
At the moment, we cannot use the hypergeometric series expansion for large $t$ for the hypergeometric function appearing in $G_s(A^2)$ and derive a bound because $s$ appears in the first two entries. To solve that, we first use \eqref{9.137} to get
\begin{align}
    \begin{split}
        _2F_1\Big(1-\frac{s}{2},-1-\frac{s}{2},\frac{3-2s}{2},-A^{-2}\Big)=&\frac{2}{3-2s}\Big(\frac{3-2s}{2}\: _2F_1\Big(2-\frac{s}{2},-1-\frac{s}{2},\frac{3-2s}{2},-A^{-2}\Big)\\
    &-(1+\frac{s}{2})A^{-2}\:_2F_1\Big(2-\frac{s}{2},-\frac{s}{2},\frac{5}{2}-s,-A^{-2}\Big)\Big).
    \end{split}  
\end{align}
We continue by applying \eqref{9.133}
to get
\begin{equation}
    \frac{2}{3-2s}\Big(\frac{3-2s}{2}\:_2F_1\Big(4-s,-2-s,\frac{3}{2}-s,w\Big)-(1+s/2)A^{-2}\:_2F_1\Big(4-s,-s,\frac{5}{2}-s,w\Big)\Big),
\end{equation}
with $-A^{-2}=4w(1-w)$. Finally, by \eqref{9.131} we write
\begin{align}
\begin{split}
    _2F_1\Big(1-\frac{s}{2},-1-\frac{s}{2},\frac{3-2s}{2},-A^{-2}\Big)=&(1-w)^{-1/2+s}\:_2F_1\Big(-\frac{5}{2},\frac{7}{2},\frac{3}{2}-s,w\Big)\\
    &-(1+s/2)\frac{2}{3-2s}A^{-2}(1-w)^{-3/2+s}\:_2F_1\Big(-\frac{3}{2},\frac{5}{2},\frac{3}{2}-s,w\Big).
\end{split}   
\end{align}
We apply now the hypergeometric series expansion to get
\begin{align}
\begin{split}
    _2F_1\Big(1-\frac{s}{2},-1-\frac{s}{2},\frac{3-2s}{2},-A^{-2}\Big)=&(1-w)^{-1/2+s}\big(1+O(w|s|^{-1}))\\
    &-(1-w)^{-3/2+s}\frac{2+s}{3-2s}A^{-2}(1+O(w|s|^{-1})).
\end{split}
\end{align}
We apply the same process to the hypergeometric function appearing in $G_s(a^2)$.

By Stirling's approximation for $\gamma_1(s)$ (see \eqref{Gammas}) we conclude the bound
$$\frac{\gamma_1(s)}{\pi}\frac{2}{s+2}\frac{G_s(A^2)-G_s(a^2)}{H} \ll |t|^{-3/2}X^{3/2}/Y.$$
On the other hand by the Mean Value Theorem on the expression $\big(A^{s+2}-a^{s+2}\big)/{H}$ and Stirling's approximation for $\gamma_1(s)$ we get the bound 
$$\frac{\gamma_1(s)}{\pi}\frac{2}{s+2}\frac{G_s(A^2)-G_s(a^2)}{H} \ll |t|^{-1/2}X^{1/2}.$$
We repeat the same process for the second summand in \eqref{4sum} in which case we have $1-s$ in place of $s$ for the function $G_{\_}(x)$ and apply Stirling's approximation for the function $\gamma_2(s)$ (see \eqref{Gammas}). We conclude that the second summand satisfies the same bounds as the first one.

We also bound the summands in \eqref{4sum} involving the hypergeometric functions $_3F_2$ by using the hypergeometric series expansion (notice that the factors involving $s$ appear on the denominator) and Stirling's approximation for $\gamma_3$, see \eqref{Gammas}. For $z=A$ or $a$, we have that
$$\frac{\gamma_3(s)}{\pi H}z\: _3F_2\Big(1,\frac{3}{2},-\frac{1}{2};\frac{s+3}{2},\frac{4-s}{2};-z^{-2}\Big)\ll |t|^{-2}Y^{-1} \:, $$
hence those terms are negligible and will not affect the overall bound that we get from the terms involving $G_s(s)$ or $G_{1-s}(x)$ and the result follows.

(iii) We return to \eqref{4sum}. Assuming that $t\le X^2/Y^2$ we use the hypergeometric series to write
\begin{equation}\label{G_s_series}
    G_s(A^2)=A^{s+2}(1+O(X^{-1})) , \quad G_{s}(a^2)=a^{s+2}(1+O(X^{-1}))
\end{equation}
and 
\begin{equation}\label{G_1-s}
    G_{1-s}(A^2)=A^{3-s}(1+O(X^{-1})), \quad G_{1-s}(a^2)=a^{3-s}(1+O(X^{-1})).
\end{equation}
Hence, from \eqref{G_s_series} we get $$\frac{\gamma_1(s)}{\pi}\frac{2}{s+2}\frac{G_s(A^2)-G_s(a^2)}{H}=\frac{\gamma_1(s)}{\pi}\frac{2}{s+2}\frac{A^{s+2}-a^{s+2}}{H}(1+O(X^{-1})).$$
Since $t\le X^2/Y^2$ we use the binomial expansion and the fact that $Y\ge X^{1/2}$ to write 
$$ A^{s+2}=(X+Y)^{(s+2)}(1+O(X^{-1})) \quad \text{and} \quad a^{s+2}=X^{(s+2)}(1+O(X^{-1})).$$
It follows that
$$\frac{\gamma_1(s)}{\pi}\frac{2}{s+2}\frac{G_s(A^2)-G_s(a^2)}{H}=\frac{\gamma_1(s)}{\pi}\frac{2}{s+2}\frac{(X+Y)^{s+2}-X^{s+2}}{H}(1+O(X^{-1})).$$
Since $H=A^2-a^2=Y^2+2YX$ we write
$$\frac{(X+Y)^{s+2}-X^{s+2}}{H}=X^s\frac{(1+Y/X)^{s+2}-1}{Y^2/X^2+2Y/X}.$$
Since $Y\gg X^{1/2}$ we conclude that 
$$\frac{\gamma_1(s)}{\pi}\frac{2}{s+2}\frac{G_s(A^2)-G_s(a^2)}{H}=X^sa(t,X/Y)+O(|t|^{-3/2}),$$
where $$a(t,X/Y)=\frac{\gamma_1(s)}{\pi}\frac{2}{s+2}\frac{(1+Y/X)^{s+2}-1}{Y^2/X^2+2Y/X}.$$
Similarly to the bounds in ii) we use Stirling's approximation and trivial bounds to get that
$$a(t,X/Y) \ll |t|^{-3/2}X^{3/2}/Y.$$
By the binomial expansion we have $(1+Y/X)^{s+2}=1+O(|s|Y/X)$. We apply Stirling's approximation to get the bound
$$a(t,X/Y) \ll |t|^{-1/2}.$$
Using \eqref{G_1-s}, we repeat the same process for the second summand in \eqref{4sum} to get
$$\frac{\gamma_2(s)}{\pi}\frac{G_{1-s}(A^2)-G_{1-s}(a^2)}{H}=X^{1-s}b(t,X/Y)+O(|t|^{-3/2}),$$
for $$b(t,X/Y)=\frac{\gamma_2(s)}{\pi}\frac{(1+Y/X)^{s+2}-1}{Y^2/X^2+2Y/X} = O\big( \min \{|t|^{-1/2},|t|^{-3/2}X/Y\}\big).$$
Same as in iii), we bound the summands in \eqref{4sum} involving the hypergeometric functions ${}_3F_2$ using the hypergeometric series expansion and Stirling's approximation for $\gamma_3(s)$ and the result follows.

(iv) We first need a bound on the sum $P_{-1/2}(ix)+P_{-1/2}(-ix)$. The transformation formula \cite[Eq. 16.8.8]{NIST} for the hypergeometric function used above, is not valid for $s=1/2$, hence we are going to use the integral representation \cite[Eq. 8.713.3]{gradshteyn2007}. Firstly, assume that $x<1$. Then we have 

\begin{equation}\label{x<1}P_{-1/2}(ix)+P_{-1/2}(-ix) \ll \int_0^{\infty}(\cosh^2t+x^2)^{-1/4}dt\ll 1.
\end{equation}
Secondly, assume that $x\ge 1$, then we have
\begin{align}
        \begin{split}
            P_{-1/2}(ix)+P_{-1/2}(-ix) &\ll \int_0^{\infty}(\cosh^2t+x^2)^{-1/4}dt \\
            & \ll x^{-1/2}\int_0^{\infty}\Big(\Big(\frac{\cosh t}{x}\Big)^2+1\Big)^{-1/4}dt.
        \end{split}
\end{align}
Now we set $u=\cosh t\big/x$ and compute:

          $$  \int_0^{\infty}\Big(\Big(\frac{\cosh t}{x}\Big)^2+1\Big)^{-1/4}dt = \int_{1/x}^{\infty}(u^2+1)^{-1/4}\frac{x}{\sqrt{x^2u^2-1}}du $$
         
We split the integration at $u=2$. For 
 $u\ge 2$ we notice that 
    $x\big/\sqrt{x^2u^2-1} \ll u^{-1}$,
    hence we can bound the integration from $2$ to $\infty$ as follows: 
    $$\int_{2}^{\infty}(u^2+1)^{-1/4}\frac{x}{\sqrt{x^2u^2-1}}du \ll \int_{2}^{\infty}(u^2+1)^{-1/4}\frac{1}{u}\ll 1.$$
    We also bound the integral from $1/x$ to $2$ as
    \begin{align}\notag
        \begin{split}
            \int_{1/x}^{2}(u^2+1)^{-1/4}\frac{x}{\sqrt{x^2u^2-1}}du \ll \int_{1/x}^2\frac{x}{\sqrt{x^2u^2-1}}du \ll \log x,
        \end{split}
    \end{align}
    noticing that if we let $xu=\cosh r$, then we have 
    $$\int_{1/x}^2\frac{x}{\sqrt{x^2u^2-1}}du =\int_{0}^{\cosh^{-1}(2x)} dr = \cosh^{-1}(2x).$$
    For $x\ge 1$, we conclude that 
    \begin{equation}\label{x>1}
         P_{-1/2}(ix)+P_{-1/2}(-ix) \ll x^{-1/2}\log x.
    \end{equation}
    It follows from Lemma \ref{lem19} and \eqref{x<1}, \eqref{x>1} that for some $\xi \in [a^2,A^2]$
\begin{align}
        \begin{split}
            d_0(f^{+}) &\ll 
             \int_0^{1}\frac{x}{\sqrt{\xi-x^2}}dx + \int_1^{\sqrt{\xi}}\sqrt{\frac{x}{\xi-x^2}}\log x\:dx \\
            &\ll 1+\int_1^{\sqrt{\xi}}\frac{\xi^{1/4}}{\sqrt{\xi}\sqrt{1-x^2/\xi}}\log( \sqrt{\xi})\:dx \\
             &\ll 1+\xi^{-1/4}\log \xi \int_1^{\sqrt{\xi}}\frac{1}{\sqrt{1-x^2/\xi}}dx \\
            &\ll \xi^{1/4}\log \xi \ll X^{1/2}\log X.
        \end{split}
    \end{align}
\end{proof}
 
\section{Proof of Theorem \ref{mainthm}}\label{Proof of counting}
We will now use our results about the Huber transform $d_t(f^{+})$ to analyse the spectral contribution to the relative trace formula, Theorem \ref{traceformula}, and finally prove Theorem \ref{mainthm}. 
\begin{proof}[Proof of Theorem \ref{mainthm}]
We have by Proposition \ref{mainprop}:
\begin{align}
    \begin{split}
        \sum_j2d_t(f^{+})\hat{u}_j^2=&\sum_{1/2<s_j\le1}\frac{2}{\pi}\gamma_1(s_j)\hat{u}_j^2X^{s_j}+\frac{1}{\pi}(3-s_j)\gamma_2(s_j)\hat{u}_j^2X^{1-s_j}\\
        &+O\Bigg(\sum_{1/2<s_j\le1}\Big(\hat{u}_j^2X^{s-1}Y+{u}_j^2X^{-s}Y\Big)\Bigg)\\
        &+\sum_{0\ne t_j\in \mathbb{R}}2d_t(f^{+})\hat{u}_j^2+O\big(X^{1/2}\log X\big),
    \end{split}
\end{align}
where the last term is due to the estimate for $d_0(f^{+})$ (see Proposition \ref{mainprop}(iv)).
Since the spectrum is discrete, for $s_j$ corresponding to a small eigenvalue, $s_j-\frac{1}{2}$ is bounded away from zero. As the number of small eigenvalues is finite, we get
$$\sum_{1/2<s_j\le1}\hat{u}_j^2\,\,Y=O(Y).$$
For the same reason,
$$\sum_{1/2<s_j\le1}\frac{1}{\pi}\gamma_2(s_j)(3-s_j)\hat{u}_j^2X^{1-s_j}=O\big(X^{1/2}\big).$$
Let $$S(f^{+})=\sum_{0\ne t_j\in \mathbb{R}}2d_t(f^{+})\hat{u}_j^2,$$
then 
\begin{equation}\label{Esf}
\sum_j2d_t(f^{+})\hat{u}_j^2=\sum_{1/2<s_j\le1}\frac{2}{\pi}\gamma_1(s_j)\hat{u}_j^2X^{s_j}+S(f^{+})+O\big(Y+X^{1/2}\log X\big).
\end{equation}
Using Proposition \ref{mainprop}(ii) and the discreteness of the spectrum, we get
\begin{align}\notag
    \begin{split}
        S(f^{+})&=\sum_{|t_j|\ge 1}2d_t(f^{+})\hat{u}_j^2+\sum_{|t_j|<1}2d_t(f^{+})\hat{u}_j^2\\
        &= \sum_{|t_j|\ge 1}2d_t(f^{+})\hat{u}_j^2+O\big(X^{1/2}\big).
    \end{split}
\end{align}
Since $d_t(f^{+})$ is an even function of $t$, see e.g. \eqref{df}, after using dyadic decomposition we get the
bound
\begin{align}\label{C-S}
    \begin{split}
        \sum_{|t_j|\ge 1}2d_t(f^{+})\hat{u}_j^2  &\ll\sum_{n=0}^{\infty}\Bigg(\sum_{2^n\le t_j<2^{n+1}}2\big|d_t(f^{+})\big|\hat{u}_j^2\Bigg)\\
        &\ll \sum_{n=0}^{\infty}\sup_{\substack{2^n\le t_j<2^{n+1}}}\big|d_{t_j}(f^{+})\big|\Bigg( \sum_{2^n\le t_j<2^{n+1}}\hat{u}_j^2\Bigg).
    \end{split}
\end{align}
From Proposition \ref{mainprop} and Lemma \ref{periods}, we compute
\begin{align}
    \begin{split}
        S(f^{+})\ll X^{1/2}\sum_{n=0}^{\infty} 2^{-n/2}\min\Big\{2^{n},XY^{-1}\Big\}+X^{1/2}.
    \end{split}
\end{align}
We split the sum according to $n<\log_2(X/Y)$ and $n>\log_2(X/Y)$. We get
\begin{align}\label{Sf++}
    \begin{split}
        S(f^{+})&\ll X^{1/2}\sum_{n<\log_2(X/Y)}2^{-n/2}\min\Big\{2^{n},XY^{-1}\Big\} \\
        &\quad+ X^{1/2}\sum_{n\ge \log_2(X/Y)}2^{-n/2}\min\Big\{2^{n},XY^{-1}\Big\}+X^{1/2} \\
        &\ll 
       .
    \end{split}
\end{align}
With this result for the spectral side and the analysis for the geometric side of our trace formula in Theorem \ref{traceformula}, we can finally show Theorem \ref{mainthm}.
From the above we have that 
\begin{align}
    \begin{split}
        \sum_{\substack{{\gamma \in H_1 \backslash \Gamma -H_1 /H_1} \\ B(\gamma)<1 }}\widetilde{g}^{+}\big(B(\gamma)^2\big)+\sum_{\substack{{\gamma \in H_1 \backslash \Gamma -H_1 /H_1} \\ B(\gamma)>1 }}g^{+}\big(B(\gamma)^2\big)=\sum_{1/2<s_j\le1}\frac{2}{\pi}\gamma_1(s_j)\hat{u}_j^2X^{s_j}\\
        \quad+O\Big(XY^{-1/2}+Y+X^{1/2}\log X\Big). \;
    \end{split}
\end{align}
But the sum for the exceptional terms $\gamma$, i.e. $B(\gamma)<1$, is $O(1)$ from Lemma \ref{Exc}. Hence we have that
$$\sum_{\substack{{\gamma \in H_1 \backslash \Gamma -H_1 /H_1} \\ B(\gamma)>1 }}g^{+}\big(B(\gamma)^2\big)=\sum_{1/2<s_j\le1}\frac{2}{\pi}\gamma_1(s_j)\hat{u}_j^2X^{s_j}+O\Big(XY^{-1/2}+Y+X^{1/2}\log X\Big).$$
We balance the error term by choosing $Y=X^{2/3}$ producing a bound of order $O\big(X^{2/3}\big).$ It follows that
$$\sum_{\substack{{\gamma \in H_1 \backslash \Gamma -H_1 /H_1} \\ B(\gamma)>1 }}g^{+}\big(B(\gamma)^2\big)=\sum_{1/2<s_j\le1}\frac{2}{\pi}\gamma_1(s_j)\hat{u}_j^2X^{s_j}+O\big(X^{2/3}\big).$$
We work similarly for the sums of $g^{-}$, $\widetilde{g}^{-}$ and use equation \eqref{Nineq} to get that
$$N(X,l)=\sum_{\substack{{\gamma \in H_1 \backslash \Gamma /H_1} \\ B(\gamma) \leq 1 }}1+\sum_{1/2<s_j\le1}\frac{2}{\pi}\gamma_1(s_j)\hat{u}_j^2X^{s_j}+O\big(X^{2/3}\big).$$
Since the first sum is of order $O(1)$ we conclude that
$$N(X,l)=\sum_{1/2<s_j\le1}\frac{2}{\pi}\gamma_1(s_j)\hat{u}_j^2X^{s_j}+O\big(X^{2/3}\big).$$
\end{proof}

\begin{remark}
Let $\nu$ be such that $H_1=\langle P^\nu \rangle$ for $P$ primitive. For $s=1$, using the fact that 
$$\hat{u_0}= \frac{\mu}{\sqrt{\hbox{vol}(\Gamma \backslash \mathbb{H})}\nu},$$
the contribution to the main term of $N(X,l)$ is $$2\frac{\gamma_1(1)}{\pi}\hat{u_0}^2X=\frac{2}{\pi} \frac{\mu^2}{\hbox{vol}(\Gamma \backslash \mathbb{H})\nu^2}X.$$
\end{remark}
\begin{remark}\label{Rem}
Let $l_1,l_2$ be representatives of two closed geodesics and $H_1$ a hyperbolic subgroup of $\Gamma$ that corresponds to $l_1$ and $H_2$ a hyperbolic subgroup that corresponds to $l_2$. We assume that $l_1$ and $\tau^{-1} \cdot l_2$ lie on the imaginary axis $I$, for some $\tau \in \hbox{PSL}_2(\mathbb{R})$. 
Then we consider the series
$$\widetilde{K}(z):= \sum_{\gamma \in H_2 \backslash \Gamma} k(v(\tau^{-1}\gamma z)).$$ 
Similarly to the proof of Theorem \ref{traceformula} we have
\begin{align}
    \begin{split}
        \int_{l_1} \widetilde{K}(z)ds &= \sum_{\gamma \in H_2\backslash \Gamma} \sum_{\gamma_0 \in H_1} \int_{l_1}k(v(\tau^{-1}\gamma \gamma_0 z))ds \\
        &=\sum_{\gamma \in H_2\backslash \Gamma /H_1}  \int_Ik(v(\tau^{-1}\gamma z))ds.
    \end{split}
\end{align}
The geometric side of Theorem \ref{traceformula} is the same for $B(\tau^{-1}\gamma)$ in place of $B(\gamma)$.
The difference in the spectral side case comes from the periods $\hat{u}_j$. The sum appearing there becomes
$$\sum_{t_j}2d_{t_j}(f)\int_{l_1}u_j(z)ds \int_{l_2}u_j(z)ds.$$
For the sum $\sum_{t_j\le T}\int_{l_1}u_j(z)ds\int_{l_2}u_j(z)ds$, we use Lemma \ref{periods} and the Cauchy--Schwarz inequality to obtain the same bound as the one in the proof of Theorem \ref{mainthm}. We have
$$\Bigg|\sum_{0\le t_j \le T}\int_{l_1}u_j(z)ds\int_{l_2}u_j(z)ds\Bigg| \ll T,$$
which can be used to show equations \eqref{C-S} and \eqref{Sf++} in this case and the same error term as in Theorem \ref{mainthm} follows for $N(X,l_1,l_2)$.
\end{remark}

\section{Second moment of the error term}\label{meansquare}
In this section we prove Theorem \ref{SecondMomthm} for averaging the second moment of the error term in counting $N(X,l)$.\\
We recall that the main term of our hyperbolic lattice counting problem is
$$M(X,l)= \sum_{1/2< s_j \le 1} \frac{2}{\pi}\gamma_1(s_j)\hat{u}_j^2X^{s_j}, $$ and the error term is 
$$ E(X,l)=N(X,l)-M(X,l).$$
Also we define the error term related to a test function $f$ as
$$E_f(X,l)=f(1)\hbox{len}(l)+\sum_{\gamma \in H_1\backslash \Gamma-H_1 / H_1}Q(B(\gamma))-2\sum_{1/2 < s_j \le 1 }d_{t_j}(f)\hat{u}_j^2,$$
where
\[ Q(B(\gamma))= 
\begin{cases}
q(B(\gamma)), & B(\gamma)>1, \\
\widetilde{q}(B(\gamma)), & B(\gamma)<1,
\end{cases}
\] 
and $q$, $\widetilde{q}$ are given by \eqref{qz} and \eqref{qtilde}.
In the proof of Theorem \ref{mainthm} (see \eqref{Esf},\eqref{Sf++}) we showed that 
\begin{align}
    \begin{split} \label{E_f}
        E_{f^{+}}(X,l) &=O(XY^{-1/2}+X^{1/2}), \\
        E_{f^{-}}(X,l) &=O(XY^{-1/2}+X^{1/2}) , 
    \end{split}
\end{align}
$$E_{f^{-}}(X,l) <E(X,l)+O\Big(Y+X^{1/2}\log X\Big)<E_{f^{+}}(X,l).$$
We choose $Y$ such that $X^{1/2}\log X\ll Y \ll X$, then 
$$ E_{f^{+}}(X,l)<E(X,l)+O(Y)<E_{f^{-}}(X,l).$$
We can now use Theorem \ref{Sieve} to prove Proposition \ref{SecondMomProp}:
\begin{proof}
We follow the proof of \cite[Prop. 5.3]{CP}.
In the following we write $f$ for either $f^{-}$ or $f^{+}$ defined in \eqref{f+} and \eqref{f-}. For the average sum of $E(X,l)$ over points $X_1,X_2, \dots ,X_R$ we have 
$$ \sum_{m=1}^R|E(X_m,l)|^2 \ll \sum_{m=1}^R|E_f(X_m,l)|^2+RY^2.$$
We define the sum
$$S(X,T)= 2\sum_{T<|t_j|\le 2T}d_{t_j}(f)\hat{u}_j^2.$$ 
and split the $t_j$ in the following intervals:
\begin{itemize}
  \item[] $A_1=\{t_j : 0<|t_j|\le 1\}$,
  \item[] $A_2=\{t_j : 1\le |t_j| \le X^2Y^{-2}\}$,
  \item[] $A_3=\{t_j : |t_j|>X^2Y^{-2}\}.$
\end{itemize}
Let also 
$$S_i=2\sum_{t_j\in A_i}d_{t_j}(f)\hat{u}_j^2,$$
hence
$$ E_f(X,l)=S_1+S_2+S_3.$$
We start by bounding $S_1$: using the estimates for $d_{t_j}(f)$ from Proposition \ref{mainprop} we have
$$\sum_{t_j\in A_1}2d_{t_j}(f)\hat{u}_j^2 \ll X^{1/2}\sum_{|t_j|<1}t_j^{-3/2}\min \{t_j,X/Y\}\hat{u}_j^2 \ll X^{1/2} \ll Y,$$
since there exist finitely many spectral parameters $|t_j|\le 1$.\\
For $S_3$ we compute
\begin{align}
    \begin{split}
        \sum_{t_j\in A_3}2d_{t_j}(f)\hat{u}_j^2 &\ll \sum_{|t_j|>X^2Y^{-2}}|t_j|^{-3/2}\min\{t_j,X/Y\}X^{1/2}\hat{u}_j^2 \\
        &\ll \sum_{t_j>X^2Y^{-2}}t_j^{-3/2}X^{3/2}Y^{-1}\hat{u}_j^2.
    \end{split}
\end{align}
By partial summation and Lemma \ref{periods} we get 
$$S_3 \ll X^{1/2} \ll Y.$$
Hence we have shown that
$$E_f(X,l) = \sum_{t_j\in A_2}2d_{t_j}(f)\hat{u}_j^2+O(Y).$$
We use dyadic decomposition. We consider values $T=2^k, k=0,1, \dots ,\log_2(X^2Y^{-2})$ and we sum over only those $T's$ to get
$$ E_f(X,l) \ll \sum_{1\le T=2^k \le X^2Y^{-2}}S(X,T)+Y$$ and we also add for $X_1,X_2,\dots X_R$ to get
$$ \sum_{m=1}^R|E_f(X_m,l)|^2 \ll \sum_{m=1}^R \Big|\sum_{1\le T=2^k \le X^2Y^{-2}}S(X_m,T)\Big|^2+RY^2.$$
We use the Cauchy--Schwarz inequality for the inner sum
$$\Big|\sum_{1\le T=2^k \le X^2Y^{-2}}S(X_m,T)\Big|^2 \ll \log X \sum_{1\le T=2^k \le X^2Y^{-2}}|S(X_m,T)|^2.$$
Combining the last two inequalities we show
$$\sum_{m=1}^R|E_f(X_m,l)|^2 \ll \log X \sum_{1 \le T=2^k \le X^2Y^{-2}}\Big(\sum_{m=1}^R|S(X_m,T)|^2\Big)+RY^2.$$
In Proposition \ref{mainprop} we have written the Huber transform of f as
$$d_{t_j}(f)= X^{1/2}(a(t,X/Y)X^{it}+b(t,X/Y)X^{-it})+O(t^{-3/2}),$$
where $a(t,X/Y)$ and $b(t,X/Y)$ are functions satisfying 
$$a(t,X/Y),b(t,X/Y) \ll |t|^{-3/2}\min\{|t|,X/Y\}.$$
Now using Theorem \ref{Sieve} for $a_j x_{\nu}^{i t_j}=d_{t_j}(f)\hat{u}_j$, we have
$$\sum_{m=1}^R\Big|\sum_{T<|t_j|\le2T}d_{t_j}(f)\hat{u}_j^2\Big|^2 \ll (T+X\delta^{-1})||a||_*^2$$ 
or, equivalently,
$$\sum_{m=1}^R|S(X_m,T)|^2 \ll (T+X\delta^{-1})||a||_*^2 \; .$$
Let us now bound the norm $||a||_*^2$:
\begin{align}
    \begin{split}
        ||a||_*^2 &\ll \sum_{T<|t_j|\le 2T}\big||t_j|^{-3/2}\min\{t_j,X/Y\}X^{1/2}\hat{u}_j\big|^2 \\
        & \ll XT^{-3}\min\{T^2,X^2Y^{-2}\}\sum_{T<|t_j|\le2T}|\hat{u}_j|^2.
    \end{split}
\end{align}
By Lemma \ref{periods} we can see that 
$$||a||_*^2\ll XT^{-2}\min\{T^2,X^2Y^{-2}\},$$
hence overall we have shown:
$$\sum_{m=1}^R|S(X_m,T)|^2 \ll (T+X\delta^{-1})XT^{-2}\min\{T^2,X^2Y^{-2}\}.$$
Using the last bound we have
\begin{align}
    \begin{split}
        \sum_{m=1}^R|E(X_m,l)|^2 &\ll \log X \sum_{1\le T=2^k\le X^2Y^{-2}}\Bigg(\sum_{m=1}^{R}|S(X_m,T)|^2\Bigg)+RY^2\\
        &\ll \log X \sum_{1\le T=2^k\le X^2Y^{-2}}(T+X\delta^{-1})XT^{-2}\min\{T^2,X^2Y^{-2}\}+RY^2.
    \end{split}
\end{align}
We get the bound
\begin{align}
    \begin{split}
        \sum_{m=1}^R|E(X_m,l)|^2 &\ll X\log X\sum_{1\le T=2^k <XY^{-1}}T+X^2\delta^{-1}\log X\sum_{1\le T=2^k < XY^{-1}}1\\
        &\quad +X^3Y^{-2}\log X\sum_{XY^{-1}\le T=2^k < X^2Y^{-2}}T^{-1} \\
        &\quad + X^4\delta^{-1}Y^{-2}\log X\sum_{XY^{-1}\le T=2^k < X^2Y^{-2}}T^{-2}+RY^2.
    \end{split}
\end{align}
By trivial bounds for each term individually we can show that
$$\sum_{m=1}^R|E(X_m,l)|^2\ll X^2Y^{-1}\log X+\delta^{-1}X^2\log^2 X+RY^2.$$
We notice that the optimal choice for $Y$ is $Y=R^{-1/3}X^{2/3}$ and that choice gives us the bound
$$\sum_{m=1}^R|E(X_m,l)|^2\ll R^{1/3}X^{4/3}\log X+\delta^{-1}X^2\log^2 X,$$
which concludes the proof of Proposition \ref{SecondMomProp}.
\end{proof}
The proof of Theorem \ref{SecondMomthm} also follows from the proof of \cite[Theorem 5.4]{CP}:
\begin{proof}
To prove equation \eqref{1/R} we choose $\delta^{-1} \ll RX^{-1}$ and $R>X^{1/2}$ in the bound 
$$\sum_{m=1}^R|E(X_m,l)|^2\ll R^{1/3}X^{4/3}\log X+\delta^{-1}X^2\log^2 X.$$
We get
$$\sum_{m=1}^R|E(X_m,l)|^2\ll R^{1/3}X^{4/3}\log X+RX\log^2 X\ll RX\log^2 X.$$
To prove equation \eqref{1/X} we choose the points $X_i$ to be equally spaced in $[X,2X]$ with $\delta=R^{-1}X$. The function $|E(x,l)|^2$ is integrable, because it has finitely many discontinuities as a function of $x$. Taking the mesh $X/R$ to tend to $0$, we have 
$$\sum_{m=1}^R|E(X_m,l)|^2\frac{X}{R} \to \int_X^{2X}|E(x,l)|^2dx.$$
This implies
$$\frac{1}{X}\int_X^{2X}|E(x,l)|^2dx \ll X\log^2 X.$$
\end{proof}

\appendix
\section{Special functions}

 We list the main properties of special functions that we use. For reference see \cite{gradshteyn2007}.

By \cite[eq. 8.328.1]{gradshteyn2007} we have the Stirling's approximation for the Gamma function:
\begin{equation}\label{Stir}
    \lim_{|y|\to \infty}|\Gamma(x+iy)| e^{\frac{\pi}{2}|y|}|y|^{\frac{1}{2}-x}\sim \sqrt{2\pi}.
\end{equation}

\begin{defn}
For $|z|<1$ the Gauss hypergeometric function is defined by the power series 
$${}_2F_1(a,b,c;z)=1+\frac{a\cdot b}{c \cdot 1}z+\frac{a(a+1)b(b+1)}{c(c+1)\cdot 1 \cdot 2}z^2+\frac{a(a+1)(a+2)b(b+1)(b+2)}{c(c+1)(c+2)\cdot 1 \cdot 2 \cdot 3}z^3+ \dots.$$
\end{defn}
It follows that ${}_2F_1(a,b,c;0)=1$.
For $\Re b>\Re c >0$ the Gauss hypergeometric function has the integral representation \cite[eq. 9.111]{gradshteyn2007}
$${}_2F_1(a,b,c;z)=\frac{1}{B(b,c-b)}\int_0^1t^{b-1}(1-t)^{c-b-1}(1-tz)^{-a}dt.$$
The derivative of ${}_2F_1(a,b,c;z)$ with respect to $z$ is given by
\begin{equation}\label{der}
\frac{d}{dz}\big({}_2F_1(a,b,c;z)\big)=\frac{ab}{c}{}_2F_1(a+1,b+1,c+1;z).\end{equation}
We also need the transformation formula \cite[eq. 9.132.2]{gradshteyn2007}
\begin{align}
\begin{split}
    {}_2F_1(a,b,c;z)&=\frac{\Gamma(c)\Gamma(b-a)}{\Gamma(b)\Gamma(c-a)}(-z)^{-a}{}_2F_1\Big(a,a+1-c,a+1-b;\frac{1}{z}\Big) \\
    &\quad+ \frac{\Gamma(c)\Gamma(b-a)}{\Gamma(a)\Gamma(c-b)}(-z)^{-b}{}_2F_1\Big(b,b+1-c,b+1-a;\frac{1}{z}\Big),
\end{split}
\end{align}
where $\arg z<\pi$,\: $a-b\neq \pm m,\quad m=0,1,2, \dots$
and formulas \cite[eq. 9.136.1--9.136.2]{gradshteyn2007}:
Let $$A=\frac{\Gamma\big(a+b+\frac{1}{2}\big)\sqrt{\pi}}{\Gamma\big(a+\frac{1}{2}\big)\Gamma\big(b+\frac{1}{2}\big)}, \quad B=\frac{-\Gamma\big(a+b+\frac{1}{2}\big)2\sqrt{\pi}}{\Gamma(a)\Gamma(b)}\;;$$ then we have
$${}_2F_1\Bigg(2a,2b,a+b+\frac{1}{2};\frac{1-\sqrt{z}}{2}\Bigg)=A\; {}_2F_1\big(a,b,\frac{1}{2};z\big)+B\sqrt{z}\; {}_2F_1\Bigg(a+\frac{1}{2},b+\frac{1}{2},\frac{3}{2};z\Bigg)$$
and
$${}_2F_1\Bigg(2a,2b,a+b+\frac{1}{2};\frac{1+\sqrt{z}}{2}\Bigg)=A\; {}_2F_1\big(a,b,\frac{1}{2};z\big)-B\sqrt{z}\; {}_2F_1\Bigg(a+\frac{1}{2},b+\frac{1}{2},\frac{3}{2};z\Bigg).$$
The Pochhammer symbol is defined as $(a)_k=\Gamma(a+k)/\Gamma(a)$. The series $${}_pF_q (\alpha_1, \alpha_2, \dots \alpha_p; \beta_1, \beta_2, \dots , \beta_q; z)=\sum_{k=0}^{\infty}\frac{(\alpha_1)_k(\alpha_2)_k \cdots (\alpha_p)_k}{(\beta_1)_k(\beta_2)_k \cdots (\beta_p)_k}\frac{z^k}{k!} $$
is called a generalized hypergeometric series.\\
For the series ${}_pF_q$ we use the integral formula \cite[eq. 7.512.12]{gradshteyn2007}:
\begin{align}
    \begin{split}
      \int_0^1x^{\mu-1}(1-x)^{\nu-\gamma-1}&{}_pF_q(a_1, \dots ,a_p,b_1, \dots ,b_q;ax)dx\\
      &=\frac{\Gamma(\mu)\Gamma(\nu)}{\Gamma(\mu+\nu)}\: _{p+1}F_{q+1}(\nu,a_1, \dots ,a_p,\mu+\nu,b_1, \dots ,b_q;a),
    \end{split}
\end{align}
where $\Re \mu>0,\; \Re \nu>0,\; p\le q+1$,\; and, if $p=q+1$ then $|a|<1$.
For $z\in \mathbb{R}$ with $|\arg({-z})|< \pi$ we also write the transformation formula \cite[(16.8.8)]{NIST}:
$$_{q+1}F_q(a_1, \dots ,a_{q+1},b_1, \dots ,b_q;z)=\sum_{j=1}^{q+1}\Bigg(\prod_{\substack{k=1 \\ k \neq j}}^{q+1}\frac{\Gamma(a_k-a_j)}{\Gamma(a_k)}\Bigg/ \prod_{k=1}^q\frac{\Gamma(b_k-a_j)}{\Gamma(b_k)}\Bigg)\widetilde{w}_j(z) \; ,$$
with $$\widetilde{w}_j(z)=(-z)^{a_j}\:_{q+1}F_q\Big(a_j,1-b_1+a_j, \dots ,1-b_q+a_j, 1-a_1+a_j,\dots, *, \dots , 1-a_{q+1}+a_j;\frac{1}{z}\Big),$$
for $j=1, \dots , q+1$, where $*$ indicates that the entry $1-a_j+a_j$ is omitted.
We also need the following transformation formulas for the hypergeometric function from \cite[pp. 1008--1010, 9.131.3, 9.133.3, 9.137.12]{gradshteyn2007}:
\begin{equation}\label{9.131}
    _2F_{1}(a,b,c;z)=(1-z)^{c-a-b}\:_2F_1(c-a,c-b,c,z),  
\end{equation}
\begin{equation}\label{9.133}
     _2F_{1}(a,b,2b;z)=\:_2F_1(a,b,a+b+1/2,4z(1-z)),  
\end{equation}
for $|z|\le1/2,|z(1-z)|\le1/4$, and 
\begin{equation}\label{9.137}
    c\;_2F_{1}(a,b,c;z)-c\;_2F_1(a+1,b,c,z)+bz\;_2F_1(a+1,b+1,c+1,z)=0.
\end{equation}

\begin{defn}
The associate Legendre function of the first kind is defined as (see \cite[eq. 8.704]{gradshteyn2007})
\begin{equation}\label{Leg}
P_{\nu}^{\mu}(z)=\frac{1}{\Gamma(1-\mu)}\Bigg(\frac{z+1}{z-1}\Bigg)^{\mu/2}{}_2F_1\Bigg(-\nu,\nu+1,1-\mu;\frac{1-z}{2}\Bigg).
\end{equation}
\end{defn}
We use the integral representation for $P_{\nu}^{-\mu}(z)$ \cite[eq. 8.713.3]{gradshteyn2007}
$$P_{\nu}^{-\mu}(z)=\sqrt{\frac{2}{\pi}}\frac{\Gamma\big(\mu+\frac{1}{2}\big)\big(z^2-1)^{\mu/2}}{\Gamma(\nu+\mu+1)\Gamma(\mu-\nu)}\int_{0}^{\infty} \frac{\cosh\big(\nu+\frac{1}{2}\big)t}{(z+\cosh t)^{\mu+\frac{1}{2}}}dt,$$
where $\Re z>-1$,\; $|\arg(z\pm 1)|<\pi$, $\Re(\nu+\mu)>-1$ and $\Re(\mu-\nu)>0$.

\nocite{*}
\printbibliography 
\end{document}